\numberwithin{equation}{section}
\theoremstyle{plain}                     
\newtheorem{theorem}{Theorem}[section]       
\newtheorem{prop}[theorem]{Proposition}     
\newtheorem{lemma}[theorem]{Lemma}      
\theoremstyle{definition}                
\newtheorem{defin}[theorem]{Definition}
\newtheorem{remark}[theorem]{Remark}
\newcommand{\R}{\mathbb R}
\newcommand{\N}{\mathbb N}
\newcommand{\Z}{\mathbb Z}
\newcommand{\C}{\mathbb C}
\newcommand{\e}{\varepsilon}
\newcommand{\mtwo}{\mathbb R^{2\times2}}
\newcommand{\HH}{{\mathcal H}}
\newcommand{\Gmu}{\beta_{\mu}}
\newcommand{\umu}{u_{\mu}}
\def\P{\mathcal P}
\def\HH{\mathcal H}
\def\A{\mathcal A}
\def\n{n}
\def\introf{\mathfrak f}
\DeclareMathOperator\dist{dist}
\DeclareMathOperator\supp{supp}
\DeclareMathOperator\skw{skew}
\DeclareMathOperator\sym{sym}
\DeclareMathOperator\Div{div}
\DeclareMathOperator\curl{Curl}
\def\weakto{\rightharpoonup}
\begin{document}

\title[Evolution of dislocations with Wasserstein dissipation]{Convergence of interaction-driven evolutions of dislocations with Wasserstein dissipation and slip-plane confinement}
\author{Maria Giovanna Mora \and Mark Peletier \and Lucia Scardia}

\address[M.G. Mora]{Dipartimento di Matematica, Universit\`a di Pavia, via Ferrata 1, 27100 Pavia, Italy}
\email{mariagiovanna.mora@unipv.it}

\address[M. Peletier]{Department of Mathematics and Computer Science and Institute for Complex Molecular Systems, TU Eindhoven, The Netherlands}
\email{M.A.Peletier@tue.nl}

\address[L. Scardia]{School of Mathematics and Statistics, University of Glasgow, 15 University Gardens, G12 8QW, Glasgow, United Kingdom}
\email{Lucia.Scardia@glasgow.ac.uk}

\begin{abstract}
We consider systems of $n$ parallel edge dislocations in a single slip system, represented by points in a two-dimensional domain; the elastic medium is modelled as a continuum. We formulate the energy of this system in terms of the empirical measure of the dislocations, and prove several convergence results in the limit $n\to\infty$.

The main aim of the paper is to study the convergence of the evolution of the empirical measure as $n\to\infty$. We consider rate-independent, quasi-static evolutions, in which the motion of the dislocations is restricted to the same slip plane. This leads to a formulation of the quasi-static evolution problem in terms of a modified Wasserstein distance, which is only finite when the transport plan is slip-plane-confined. 

Since the focus is on interaction between dislocations, we renormalize the elastic energy to remove the potentially large self- or core energy. We prove Gamma-convergence of this renormalized energy, and we construct joint recovery sequences for which both the energies and the modified distances converge. With this augmented Gamma-convergence we prove the convergence of the quasi-static evolutions as $n\to\infty$.
\end{abstract}

\maketitle

\section{Introduction}
It is well known that plastic, or permanent, deformation in metals is caused by the concerted movement of many curve-like defects in the crystal lattice, called \emph{dislocations}. What is not yet known is how to use this insight to predict behaviour at continuum scales. It would be natural to take a sequence of systems with increasing numbers of dislocations, and derive an effective description in terms of dislocation \emph{densities}. In various cases  formal and rigorous convergence results have been proved of the elastic energies generated by the dislocations (see the discussion of the literature below), and this provides a good starting point.

However, macroscopic plasticity heavily depends on \emph{dynamic} properties of the dislocation curves. The most basic aspect of the motion of dislocation curves is the confinement to \emph{slip planes}: each curve can move only in the plane spanned by its \emph{Burgers vector} and the tangent to the curve. Other essential dynamic phenomena are \emph{creation} and \emph{annihilation} of dislocations, and their behaviour at \emph{obstacles}; especially in small systems, these latter phenomena are believed to be the main determining factors in the plastic behaviour of metals~\cite{DeshpandeNeedlemanVan-der-Giessen05,ChakravarthyCurtin10}. 

Although models exist that describe the motion, creation, annihilation, and obstacle behaviour of dislocations at the level of densities (see e.g.~\cite{GromaBalogh99,GromaCsikorZaiser03,YefimovGromaVanderGiessen04,YefimovGiessen05,YefimovVanderGiessen05a,BakoGromaGyorgyiZimanyi06,GromaGyorgyiKocsis07,GromaGyorgyiIspanovity10}), these are phenomenological in nature, and the connection between these models and more microscopic descriptions of dislocation motion is tenuous. Many different models exist, and at this moment no good method is available to compare these or choose between them. For a better understanding of the complex motion of dislocations it is therefore essential to understand the upscaling of discrete-dislocation models to descriptions at the level of densities.

A mathematical theory of creation, annihilation, and obstacles can only be formulated in the context of dislocation motion that is confined to the slip plane. In this paper we therefore make a first step in the direction of the dynamics of dislocations, by proving a rigorous upscaling of a system of moving edge dislocations in two dimensions \emph{with slip-plane confinement}. Although this confinement may seem a minor restriction, it actually makes the proof of the upscaling far more complex.

\subsection{Setup}
We restrict ourselves to straight and parallel \emph{edge} dislocations in plane strain, and we assume that only one slip system is active, with Burgers vector $b = e_1\in \R^2$, and that all dislocations have the same sign. Since we will take the many-dislocation limit, we describe the positions of the dislocations by a measure on a bounded open set $\Omega\subset \R^2$ (the cross section) of the form
$$
\mu = \frac 1n\sum_{i=1}^n \delta_{z_i},
\qquad \{z_i\}_{i=1}^n \subset \Omega. 
$$
The dislocations interact with each other through the elasticity of the medium, which we assume to be a homogeneous, isotropic, linearly elastic continuum; its properties are characterized by the fourth-order stress-strain tensor $\mathbb C$, which satisfies $\mathbb{C} F=\lambda\, {\rm tr}({\rm sym}\,F) {\rm Id} +2\mu \, {\rm sym}\,F$ with Lam\'e constants $\lambda+\mu\geq0$ and $\mu>0$. 

Since the elastic energy density is not integrable close to a dislocation, we employ the \emph{core-region approach} of removing disks of radius $\e_n\to 0$ around the dislocations, leading to an effective domain $\Omega_n(\mu):=\Omega\setminus \bigcup_{j=1}^n \overline B_{\e_n}(z_j)$. In addition we assume that the dislocations are separated from each other by a distance $r_n\to0$ and that they can only live in a closed rectangle $\mathcal R$ in $\Omega$ whose sides are parallel to the coordinate axes and which has a distance $\ell>0$ from the boundary $\partial \Omega$. We assume that $\e_n$ and $r_n$ satisfy 
\begin{equation}\label{hyp:enrn}
\e_n\to 0 ,\quad r_n \to 0, \quad  \e_n /r_n^{3}\to0, \quad r_n n \to 0 \quad \textrm{as} \,\, n\to \infty.
\end{equation}
See Section~\ref{cond-separation} for comments on this choice.

At finite $n$, admissible measures $\mu$ belong to the set 
\begin{equation}\label{def:Xn}
X_n := \Bigl\{ \mu = \frac1n \sum_{i=1}^n \delta_{z_i} \text{ such that } \{z_i\}_{i=1}^n \subset \mathcal R \text{ and } |z_i-z_j|\geq r_n \text{ for }i\not= j \Bigr\}.
\end{equation}
The elastic energy of a measure $\mu\in X_n$, defined for convenience on the set $\mathcal{P}(\Omega)$ of all probability measures, is 
\begin{equation}
\label{def:Fn}
F_n(\mu) := \begin{cases}
\displaystyle
\inf_{\beta\in \mathcal A_n(\mu)} E_n(\mu,\beta) &\text{if }\mu\in X_n,
\\
+\infty&\text{if } \mu\in \mathcal{P}(\Omega)\setminus X_n,
\end{cases}
\end{equation}
where
\begin{equation}
\label{def:En}
E_n(\mu,\beta) := \frac12 \int_{\Omega_n(\mu)} \mathbb C\beta:\beta \,dx.
\end{equation}
The tensor-valued field $\beta$ has the interpretation of the elastic part of the strain. The admissibility class $\A_n(\mu)$ characterizes the incompatibility conditions on $\beta$ generated by the dislocations: 
\begin{multline}
\A_n(\mu):=\Big\{\beta \in L^2(\Omega;\mtwo): \ \beta=0 \text{ in } \Omega\setminus\Omega_n(\mu), \quad 
\curl \beta = 0  \mbox{ in }\Omega_n(\mu),\\
\int_{\partial B_{\e_n}(z_i)}\beta\, \tau\, d\HH^1 = \frac bn \text{ for every }i=1,\dots, n
\Big\},
\label{def:Aemu}
\end{multline}
where $\tau$ is the tangent to $\partial B_{\e_n}(z_i)$ and the integrand $\beta\,\tau$ is understood in the sense of traces (see \cite{CermelliLeoni06} for details).
The minimization problem~\eqref{def:Fn} is a reformulation of the standard elasticity problem, in terms of the elastic strain $\beta$, with stress-free boundary conditions at $\partial\Omega_n(\mu)$.

Note that since the dislocation density $\mu\in X_n$ is rescaled to have mass one, the incompatibility of the strain $\beta$ at every dislocation is of order $1/n$. We also observe that in alternative to the integral incompatibility condition in \eqref{def:Aemu} one could require the more familiar condition on the circulation of the strain:
$$
\curl\beta = \frac1n\frac{b}{2\pi \varepsilon_n}\sum_{i=1}^n \mathcal{H}^{1}\llcorner{\partial B_{\varepsilon_n}}(z_i)
\quad \text{in }\Omega.
$$
For more background on dislocations in general, see e.g.~\cite{Kroener81,HirthLothe82,Callister07}; a more detailed derivation of this model can be found in~\cite{VanderGiessenNeedleman95,CermelliLeoni06,GarroniLeoniPonsiglione10}.

\subsection{$\Gamma$-convergence of the renormalized energy}

The discrete evolutionary system at finite~$n$ is defined by the energy functional $ F_n$ and a dissipation distance that we introduce below. We first  study the behaviour of $F_n$ as $n\to\infty$.

Garroni, Ponsiglione, and co-workers~\cite{GarroniLeoniPonsiglione10,DeLucaGarroniPonsiglione12} show that as $n\to\infty$ and $\e_n\to0$ the energy $F_n$ has contributions of order $1$ and of order $n^{-1}|\log\e_n|$. The  contributions of order $1$ stem from the interaction between pairs of distant dislocations, of which there are $n^2$, each with strength of order $n^{-2}$ by the scaling in~\eqref{def:Aemu}. 
The contributions of order $n^{-1}|\log \e_n|$ arise from the energy of order $n^{-2}|\log \e_n|$ contained in a neighbourhood of each of the $n$ dislocations. Depending on the relative size of $n$ and $|\log \e_n|$, one or the other will dominate.

In this paper we consider an evolution that conserves the total number of dislocations, and therefore the self-energy of order $n^{-1}|\log\e_n|$ is of no relevance, even when it is large. To extract the interaction energy, we renormalize $F_n$ by defining 
\begin{equation}
\label{def:curly-F-intro}
\mathcal F_n (\mu) := \begin{cases}
\displaystyle
F_n(\mu) - \frac1{2\n^2}\sum_{i=1}^{\n} \int_{\Omega_n(\mu)}\mathbb CK^n_{z_i}:K^n_{z_i} \,dx
& \quad\text{if } \mu\in X_n, \ \displaystyle\mu=\frac1n\sum_{i=1}^n\delta_{z_i},
\\
+\infty 
& \quad\text{if } \mu\in \mathcal{P}(\Omega)\setminus X_n.
\end{cases}
\end{equation}
Here $K^n_z$ is a small correction of the canonical strain field $K_z$ generated by a single dislocation at $z$ in $\R^2$, as defined in Section~\ref{subsec:aux-functions}. The second term has the same scaling $n^{-1}|\log \e_n|$ as the self-energy, and approximately cancels the self-energy in the first term. 
The aim of this renormalization is to extract that part of the energy that characterizes the interaction, and our first main result (Theorem~\ref{ups:density}) makes clear in which sense this is indeed the case:

\begin{theorem}
\label{th:main1-intro}
Under conditions \eqref{hyp:enrn} on $\e_n$ and $r_n$, the functionals $\mathcal F_n$ $\Gamma$-converge in the space of probability measures $\P(\Omega)$ endowed with the narrow topology to the limit functional
\begin{multline}
\label{def:limitF}
\mathcal F(\mu) := \frac12 \iint_{\Omega\times\Omega} V(y,z)\,d\mu(y) d\mu(z) \\
  + \inf_v \Bigl\{\frac12 \int_\Omega \mathbb C\nabla v:\nabla v \,dx
          + \int_\Omega\int_{\partial\Omega} \C K(x;y)\nu(x)\cdot v(x)\,d\HH^1(x)\,d\mu(y)\Bigr\},
\end{multline}
where
\begin{equation}\label{def:V-intro}
V(y,z):=\int_{\Omega}\C K_y(x):K_z(x)\,dx.
\end{equation}
\end{theorem}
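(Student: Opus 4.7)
Given $\mu_n=\tfrac1n\sum\delta_{z_i^n}\in X_n$, the pivotal step is to decompose every admissible strain $\beta\in\A_n(\mu_n)$ as
\[
\beta = \frac1n\sum_{i=1}^n K^n_{z_i^n}\chi_{\Omega_n(\mu_n)} + \nabla v, \qquad v\in H^1(\Omega_n(\mu_n);\R^2),
\]
which is well defined because $K^n_z$ is designed to carry the prescribed circulation $b/n$ from \eqref{def:Aemu}, so the residual has zero flux around every core and is a gradient on the multiply connected domain $\Omega_n(\mu_n)$. Substituting into $F_n$ and subtracting the self-energy isolates three pieces: a pairwise sum $\tfrac1{2n^2}\sum_{i\neq j}\int\mathbb C K^n_{z_i^n}:K^n_{z_j^n}$, a cross term $\tfrac1n\sum_i\int\mathbb C K^n_{z_i^n}:\nabla v$, and $\tfrac12\int\mathbb C\nabla v:\nabla v$; these will converge to the three terms in \eqref{def:limitF}.

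\textbf{Lower bound.} If $\mu_n\weakto\mu$ narrowly with $\mathcal F_n(\mu_n)\leq C$, let $v_n$ be the optimal correction. The quadratic piece bounds $\int|\nabla v_n|^2\leq C$, and after extension across the $n$ small cores (justified by $n\e_n^2\to0$, implied by \eqref{hyp:enrn}) I extract $\nabla v_n\weakto\nabla v$ in $L^2(\Omega;\mtwo)$. For the pairwise sum I truncate to $V\wedge M$, which is continuous on $\Omega\times\Omega$; the separation $|z_i-z_j|\geq r_n$ together with the estimate $|\int\mathbb C K^n_{z_i}:K^n_{z_j}-V(z_i,z_j)|=o(1)$ uniformly over such pairs (here $\e_n/r_n^3\to0$ enters) gives $\tfrac1{n^2}\sum_{i\neq j}(V\wedge M)(z_i^n,z_j^n)\to\iint V\wedge M\,d\mu d\mu$ by narrow convergence of $\mu_n\otimes\mu_n$; monotone convergence as $M\to\infty$ together with positivity of $V$ then produces the liminf. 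The cross term is treated by integration by parts, using $\div(\mathbb C K^n_{z_i})=0$ in $\Omega_n(\mu_n)$, which reduces it to a boundary integral on $\partial\Omega$ plus core terms of order $\e_n$ that vanish; since $\mathcal R$ is at distance $\ell>0$ from $\partial\Omega$, the map $y\mapsto\mathbb C K(\cdot;y)\nu$ lies in $C(\partial\Omega;\R^{2\times 2})$, so pairing the weak narrow convergence $\tfrac1n\sum_i\mathbb C K^n_{z_i^n}\nu\weakto\int\mathbb C K(\cdot;y)\nu\,d\mu(y)$ on $\partial\Omega$ with strong trace compactness of $v_n$ recovers the boundary term in \eqref{def:limitF}, while weak $L^2$ lower semicontinuity handles the quadratic piece.

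\textbf{Upper bound and main obstacle.} For $\mu$ with smooth density on a compact subset of $\mathcal R$, I place the $n$ points on a quasi-uniform mesh adapted to $\mu$ with spacing $\sim n^{-1/2}$ (feasible under the separation constraint because $r_nn\to0$ forces $r_n\ll n^{-1/2}$), take as regular correction the minimiser $v$ in \eqref{def:limitF}, and verify that each of the three pieces converges: the pairwise sum converges by quadrature for the log-singular kernel $V$ (near-diagonal contribution of size $O(n^{-1}\log n)$, hence negligible), while the cross and quadratic terms converge by continuity once \eqref{hyp:enrn} allows replacing $K^n$ by $K$ with vanishing error. General $\mu$ is handled by narrow-topology mollification and a diagonal argument; atomic $\mu$ make $\mathcal F(\mu)=+\infty$ and the bound is vacuous. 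The principal technical obstacle is the coordination of the three small scales $\e_n$, $r_n$, and $n^{-1/2}$: the hypothesis $\e_n/r_n^3\to0$ in \eqref{hyp:enrn} is precisely calibrated to keep the self-energy subtraction accurate to $o(1)$ uniformly in $X_n$ and to control the pairwise replacement $K^n\rightsquigarrow K$; a secondary difficulty is obtaining uniform trace and extension estimates on the perforated domain $\Omega_n(\mu_n)$ with constants independent of $n$.
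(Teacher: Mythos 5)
Your proposal follows the same route as the paper: the decomposition $\beta = \frac1n\sum_i K^n_{z_i}\chi_{\Omega_n}+\nabla v$ is exactly \eqref{Gmu-dec}; the replacement of $K^n$ by $K$ and $\Omega_n$ by $\Omega$ under \eqref{hyp:enrn} is the paper's preliminary estimates \eqref{Bclaim}--\eqref{Cclaim}; the truncation $V\wedge M$ and narrow convergence of the empirical pair measure give the liminf for the interaction term; compactness of traces and weak $L^2$ lower semicontinuity handle the boundary and quadratic pieces; and a grid construction gives the limsup for nice $\mu$. Two small corrections: $V$ is not positive but only bounded below on compact subsets of $\Omega\times\Omega$ (Remark~\ref{rmk:bb}), so the liminf concludes by letting $M\to\infty$ in an $M$-uniform lower bound rather than by monotone convergence; and the near-diagonal contribution to the recovery double sum is $O(R_M^2\log(1/R_M))$ and is killed by sending $M\to\infty$ after $n$, not by a rate $O(n^{-1}\log n)$.

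The genuine gap is in the reduction to nice $\mu$, which you dispose of with ``narrow-topology mollification and a diagonal argument'' plus the dichotomy smooth--vs.--atomic. First, the dichotomy is incomplete: a measure with $\mathcal F(\mu)<+\infty$ (equivalently $\mu\in H^{-1}(\Omega)$) can be nonatomic yet concentrate on a line, so it is not reached by ``smooth density on a compact subset.'' Second, and more importantly, mollification does \emph{not} automatically satisfy $\limsup_h \iint V\,d\mu^h\,d\mu^h \le \iint V\,d\mu\,d\mu$: the kernel $V$ is unbounded, has a boundary-dependent part, and carries no evident positive-definiteness, so averaging mass together can a priori \emph{increase} the double integral. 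The paper's Step~1 of the limsup proof is entirely devoted to this point: it averages $\mu$ onto shrunken squares $Q^h_k\subsetneq\tilde Q^h_k$ (so that supports are separated by gaps of order $h$) and then compares $\iint V\,d\mu^h\,d\mu^h$ to $\iint V\,d\mu\,d\mu$ by playing the upper logarithmic bound \eqref{imp-est} against the matching lower logarithmic bound \eqref{below-est} on $V$ at each dyadic separation scale; only this two-sided control gives the needed $\limsup_h$ inequality. Your sketch does not address this, and without some substitute for it the limsup inequality is established only for a strict subclass of measures with finite energy.
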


The first term in the limit $\mathcal F$ is a two-point interaction functional, with interaction potential $V(y,z)$. If we  replace the integration over $\Omega$ in~\eqref{def:V-intro} by integration over $\R^2$ we  obtain the formula for the interaction between segments of dislocations in infinite space that is widely used in the engineering literature (see e.g.~\cite[(5--16)]{HirthLothe82}). The distinction between this formula and $V$, and the second term in~\eqref{def:limitF}, is generated by the boundedness of $\Omega$.

\subsection{Quasi-static evolution of dislocations}

In the second part of this paper we use the convergence result above to pass to the limit in a \emph{rate-independent} or \emph{quasi-static} evolution that is driven by $\mathcal F_n$.

This rate-independent evolution arises from the model underlying dislocation motion, and in this section we describe the background of this model. Since dislocations are defects in the atomic lattice, their movement can be likened to that of a point particle in a periodic potential: the wells of this potential correspond to energetically favourable positions of the defect, and their spacing is exactly one Burgers vector. This locally periodic potential is tilted by a global driving force that varies on larger scales and arises from the bulk elasticity. 

Motion in this tilted periodic potential is assumed to arise from thermal fluctuations, leading to a jump process between wells with a rate that is determined by the rate of escape from a well. Orowan~\cite{Orowan40} first formulated this concept of dislocation motion and the Arrhenius rate expression that follows from it. In the one-dimensional case, with a periodic potential with wells of depth $e$ and spacing $\ell$, and a global force $\introf$, the rates of jumping right and left are
\[
r_+ = \upalpha \mathrm e^{-\upbeta (e - \introf\ell)}
\qquad r_- = \upalpha \mathrm e^{-\upbeta (e + \introf\ell)},
\]
where $\upalpha>0$ is a fixed constant and $\upbeta = 1/kT$ is inverse temperature. The expected velocity therefore is equal  to $2\upalpha\,\exp(-\upbeta e) \sinh(\upbeta \introf\ell)$.

`Rate-independence' appears in this expression for the rate of motion when we take the low-temperature limit $\upbeta\to\infty$:
\begin{equation}
\label{eq:RIlaw}
2\upalpha \,\mathrm e^{-\upbeta e} \sinh(\upbeta \introf\ell) \quad\xrightarrow {\upbeta\to\infty}\quad m\Bigl(\frac{\introf\ell}e\Bigr), 
\qquad m(s) := 
\begin{cases}
\emptyset  & \text{if }s<-1\\
(-\infty,0] & \text{if } s=-1\\
\{0\} & \text{if } -1 < s<1\\
[0,\infty) & \text{if } s=1\\
\emptyset & \text{if } s>1.
\end{cases}
\end{equation}
Here the convergence is in the sense of graphs, and the limit $m$ is a graph, i.e., a set-valued function.
In words: at low temperatures, motion only takes place when $\introf = \pm e/\ell$, and for those two values of $\introf$ any velocity is possible; for smaller forces $\introf$ the motion is arrested, and larger forces should never appear (see Figure~\ref{rate-independent}).   What constitutes low temperatures can be understood in terms of an energy comparison: low temperatures are those in which the thermal energy $kT = \upbeta^{-1}$ is small with respect to the activation energy $e$.\footnote{This description is simplified from many points of view; for instance, dislocations are generally curved, not straight, and micro-obstacles play an important role in the motion of dislocations. See e.g.~\cite{TeodosiuSidoroff76,OrtizPopov82} for more detailed treatments of three-dimensional dislocations.}

\begin{figure}[h]
\labellist
\pinlabel $s$ [l] <3pt,0pt> at 483 259
\pinlabel $m(s)$ at 325 430
\pinlabel $-1$ [tl] at 150 250
\pinlabel $1$ [tl] at 390 250
\endlabellist
\begin{center}
\includegraphics[width=2.5in]{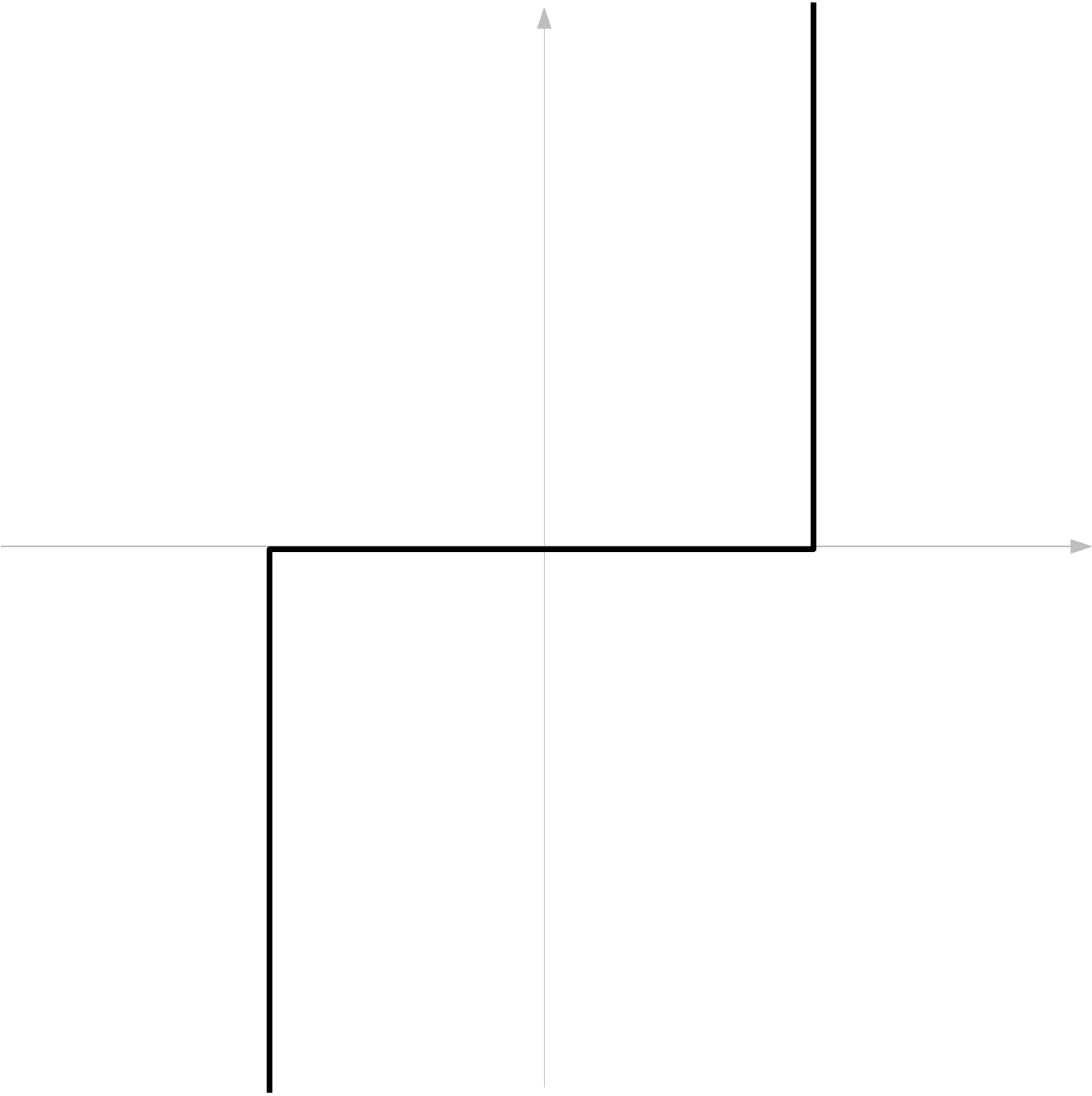}
\end{center}
\caption{The rate-independent limit.}
\label{rate-independent}
\end{figure}
\bigskip

Inspired by these arguments we choose a rate-independent evolution for the discrete system. In order to force a non-trivial evolution, we add a smoothly time-varying external load of the form
\begin{equation}\label{loading-term}
\int_{\Omega} f(x,t)\,d\mu(x).
\end{equation}
The function $f$ can be interpreted as a time-varying potential for dislocations, in the sense that the derivative $\partial_{x_1}f(x,t)$ acts as an additional horizontal force on a dislocation at $x$. For instance, applying an appropriate time-varying boundary traction, one can achieve a spatially uniform shear stress $\sigma(t)$ in the bulk, which can be represented with the potential $f(x,t) = \sigma(t)x_1$.
The total energy for this system is then
\begin{equation}\label{def:tildeFn}
\widetilde {\mathcal F}_n (\mu,t) = \mathcal  F_n(\mu) - \int_{\Omega} f(x,t) \,d\mu(x).
\end{equation}

We assume that each of the dislocations $z_i$ evolves by a rate-independent law of the type~\eqref{eq:RIlaw}. The driving force $\introf$  for each of the dislocations is the derivative of the energy with respect to the position $z_i$, $-\nabla_{z_i} \widetilde{\mathcal F}_n\bigl(\frac1n \sum_{j=1}^n\delta_{z_j},t\bigr)$. Since our edge dislocations are restricted to movement in the direction of the Burgers vector $b=e_1$, the motion is one-dimensional, and only the horizontal or first component of this force leads to motion: 
\begin{equation}
\label{eq:QSE-RN}
\dot z_i(t) \in e_1 \, m\Bigl(-e_1\cdot \nabla_{z_i} \widetilde{\mathcal F}_n\Bigl(\frac1n \sum_{j=1}^n\delta_{z_j(t)},t\Bigr)\Bigr).
\end{equation}
Note how the `equation' above is an inclusion, since the right-hand side is multivalued.
In~\eqref{eq:QSE-RN} we have normalized the constant $\ell/e$ to $1$ for simplicity. 

The flow rule~\eqref{eq:QSE-RN} can be interpreted as follows: for each $i$ separately, and at each $t$, \emph{either} the velocity $\dot z_i(t)$ is zero, \emph{or} the force $-e_1\cdot \nabla_{z_i} \widetilde{\mathcal F}_n$ equals $\pm1$; in the latter case, $\dot z_i$ is parallel to $e_1$ and points in the same direction as the force.

\subsection{Measure-valued formulation}
Instead of considering solutions of~\eqref{eq:QSE-RN} in the classical sense, we focus on solutions in the \emph{energetic} or \emph{quasi-static} sense~\cite{Mielke05a}, given by Definition~\ref{def:quasistat} below. For this we recast the problem as a rate-independent system in the space of probability measures $\P(\Omega)$, driven by~$\widetilde{\mathcal F}_n$, with a Wasserstein-type dissipation, which we now introduce.

To motivate the definition, consider a smooth curve $\mu(t) = \frac1n \sum_{i=1}^n \delta_{z_i(t)}$. Since dislocations move parallel to $b=e_1$, $\partial_t z_i(t)\cdot e_2 = 0$; therefore
\[
\partial_t \int_\Omega \varphi(x_2) \, d\mu(t)(x) = \frac1n \partial_t \sum_{i=1}^n \varphi(z_i(t)\cdot e_2) = 0
\qquad\text{for any }\varphi\in C^1(\R).
\]
Along the curve the integral on the left-hand side is preserved, implying that if $\mu$ and $\nu$ can be transported into each other, then $\int \varphi(x_2) \,d(\mu-\nu)(x) =0$; or equivalently, $(\pi_2)_\#\mu = (\pi_2)_{\#}\nu$, where $\pi_2:\Omega\to\R$ is the coordinate mapping $\pi_2(x):= x_2$. This leads us to define the distance function
\begin{equation}
\label{def:d}
d(\mu,\nu) := \begin{cases}
\inf_{\gamma\in \Gamma(\mu,\nu)} \iint_{\Omega\times\Omega} |x-y|\, d\gamma(x,y)
  & \text{if } (\pi_2)_\#\mu = (\pi_2)_\#\nu,\\
+\infty & \text{otherwise,}
\end{cases}
\end{equation}
where $\Gamma(\mu,\nu)$ is a restricted set of couplings of $\mu$ and $\nu$,
\begin{multline}
\label{def:Gamma}
\Gamma(\mu,\nu) := \bigl\{\gamma\in \P(\Omega\times\Omega): \gamma(A\times \Omega) = \mu(A),\ 
  \gamma(\Omega\times A) = \nu(A)\text{ for all Borel sets }A\subset \Omega, \\
  \text{ and }
 \pi_2(x)=\pi_2(y) \text{ for $\gamma$-a.e.\ } (x,y)\in\Omega\times\Omega \bigr\}.
\end{multline}
This is the usual $1$-Wasserstein or Monge-Kantorovich transport distance on $\P(\Omega)$~\cite{Villani03}, except for the additional restriction that $\pi_2(x)=\pi_2(y)$ for $\gamma$-a.e.\ $(x,y)$; this restriction forces the transport to move parallel to $b=e_1$. We describe some properties of this distance in Lemma~\ref{lemma:props-d}.\medskip

Now we have defined the driving functional $\widetilde{\mathcal F}_n$ and the dissipation distance $d$, a solution of the quasi-static evolution is defined as follows.
\begin{defin}\label{def:quasistat}
A solution of the quasi-static evolution associated with $\widetilde{\mathcal F}_n$ is a curve $t\mapsto \mu(t)$ from $[0,T]$ into $X_n$  that satisfies the two following conditions: 
\begin{itemize}
\item {\em global stability:} for every $t\in [0,T]$ we have 
$$
{\widetilde{\mathcal{F}}}_n(\mu(t),t) \ \leq\   d(\nu, \mu(t)) + \widetilde{\mathcal{F}}_n(\nu,t) 
$$
for every $\nu \in X_n$;

\item {\em energy balance:} for every $t\in [0,T]$
$$
\widetilde{\mathcal{F}}_n(\mu(t),t) + \mathcal{D}(\mu,[0,t]) 
\ =\ \widetilde{\mathcal{F}}_n(\mu(0),0)  - 
\int_0^{t}\int_{\Omega}\dot{f}(s)\, d\mu(s) ds,
$$
where $\mathcal{D}(\mu,[0,t])$ is the total dissipation of $\mu$ on $[0,t]$, 
\begin{equation}\label{e:var-diss}
\mathcal{D}(\mu,[0,t]):= \sup\Big\{\sum_{i=1}^Md(\mu(t_i), \mu(t_{i-1})): \ 0=t_0<\dots<t_M=t, \ M\in\N\Big\}.
\end{equation}
\end{itemize}
\end{defin}

In Section~\ref{Sect:quasistatic} we prove the existence of a solution for this evolution at fixed $n\in \N$ (Theorem~\ref{th:existence-finite-n}).

\subsection{Convergence of the quasi-static evolutions}

Now we have defined the energy of the discrete system and its evolution in time, we turn to the convergence of this evolutionary system as $n\to\infty$.

The renormalized $\Gamma$-convergence result in terms of measures (Theorem~\ref{th:main1-intro}) is the first ingredient in passing to the limit, as $n\to\infty$, in the quasi-static evolution that we formulated above. The second ingredient is a stronger recovery result, the existence of  a \emph{joint} or \emph{mutual} recovery sequence~\cite{MielkeRoubicekStefanelli08}, for which both $\mathcal F_n$ and $d$ converge. Specifically, if $\mu_n\weakto\mu$, and if $\nu$ is given, we need to construct $\nu_n$ such that 
$$
\mathcal F_n(\nu_n) + d(\mu_n,\nu_n) \to \mathcal F(\mu) + d(\mu,\nu).
$$
This is a stronger property than Theorem~\ref{th:main1-intro} in a non-trivial way. To start with, the topology of $d$ is stronger than the narrow topology, implying that the recovery sequence generated by Theorem~\ref{th:main1-intro} does not necessarily converge in $d$. The situation is worse, however: since $d(\mu,\nu)$ is only finite on measures with equal vertical marginals ($(\pi_2)_\#\mu=(\pi_2)_\#\nu$), the finiteness of $d(\mu_n,\nu_n)$ requires $\nu_n$ to be an exact horizontal transport of $\mu_n$, i.e., the slip planes of $\mu_n$ and $\nu_n$ have to coincide, and the numbers of dislocations on each of the slip planes have to be the same for the two. Note that the recovery sequence $\nu_n$ necessarily depends on the whole sequence $\mu_n$.

Because of this tight binding between $\mu_n$ and $\nu_n$, we prove the existence of such a joint recovery sequence under a restriction on $\mu_n$: we require the minimal slip-plane spacing in $\mu_n$ to be bounded from below, and the numbers of dislocations on each slip plane to be bounded from above, both by powers of~$n$ (see Definition~\ref{def:classYng} and Theorem~\ref{thm:joint}). In the context of convergence in the evolutions this can relatively easily be achieved by imposing the same restrictions on the approximations of the initial datum $\mu^0$, since the evolution preserves these restrictions. The set of measures that can be approximated this way includes all measures with bounded two-dimensional Lebesgue density, but also some concentrated measures; see Lemma~\ref{lemma:char-Pinfty} for details.\medskip

Armed with this joint recovery sequence and setting
$$
\widetilde{\mathcal F}(\mu, t):=\mathcal F(\mu)-\int_\Omega f(t,x)\mu(t,x)\, dx,
$$
in Section~\ref{conv:evolution} we prove the following theorem (Theorem~\ref{th:conv_evolutions}):

\begin{theorem}
\label{th:plasticlimit}
Consider $\mu^0\in \P(\Omega)$ such that there exists an approximating sequence $\mu_n^0$ according to Theorem~\ref{thm:joint}, with $\mu_n^0\weakto \mu^0$  and 
$\mathcal{F}_n(\mu^0_n)\to \mathcal F(\mu^0)$. Assume that $f\in W^{1,1}(0,T;C(\Omega))$ is given. 
For every $n\in\N$ let $t\mapsto\mu_n(t)$ be a quasi-static evolution on $[0,T]$ associated with $\widetilde{\mathcal F}_n$ and with initial value $\mu^0_n$, according to Definition~\ref{def:quasistat}. Then 
\begin{enumerate}
\item \textbf{Compactness:} There exists a subsequence $\mu_n$ (without change in notation) and a limit curve $\mu:[0,T]\to\P(\Omega)$ such that $\mu_n(t)\weakto \mu(t)$ for all $t\in [0,T]$; 
\item \textbf{Convergence:} The curve $\mu$ is a quasi-static evolution associated with $\widetilde{\mathcal F}$ and with initial value $\mu^0$ (see \eqref{0min-1} and \eqref{0e-bal-1}). 
\end{enumerate}
\end{theorem}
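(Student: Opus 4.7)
The plan is to apply the standard Mielke--Mainik--Roubi\v{c}ek framework for convergence of energetic solutions of rate-independent systems, combining the $\Gamma$-convergence of $\mathcal F_n$ from Theorem~\ref{th:main1-intro}, the joint recovery construction of Theorem~\ref{thm:joint}, and the regularity of the loading $f$. The four ingredients are: uniform a priori bounds, a Helly-type selection yielding the pointwise narrow limit $\mu(t)$, stability passage, and energy-balance passage.

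\medskip
\textbf{Step 1 (a priori bounds and compactness).} Rearranging the discrete energy balance at $T$ gives
\begin{equation*}
\mathcal D(\mu_n,[0,T]) = \mathcal F_n(\mu_n^0) - \mathcal F_n(\mu_n(T)) - \int_\Omega f(\cdot,0)\,d\mu_n^0 + \int_\Omega f(\cdot,T)\,d\mu_n(T) - \int_0^T\!\!\int_\Omega \dot f(s,x)\,d\mu_n(s)\,ds.
\end{equation*}
The convergence $\mathcal F_n(\mu_n^0)\to\mathcal F(\mu^0)$, the equicoercivity implied by Theorem~\ref{th:main1-intro} (a uniform lower bound on $\mathcal F_n$), the uniform bound on $f$ coming from $f\in W^{1,1}(0,T;C(\Omega))$, and the fact that the $\mu_n(s)$ are probability measures together yield uniform bounds on $\mathcal D(\mu_n,[0,T])$ and on $\sup_t\mathcal F_n(\mu_n(t))$. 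Since $d$ dominates $W_1$ whenever it is finite and consecutive values of the evolution have finite $d$-distance, the curves $t\mapsto\mu_n(t)$ have uniformly bounded $W_1$-variation; a Helly selection on the separable space $(\P(\Omega),W_1)$ extracts a subsequence with $\mu_n(t)\weakto\mu(t)$ for every $t\in[0,T]$.

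\medskip
\textbf{Step 2 (stability passage).} Fix $t\in[0,T]$ and a test measure $\nu\in\P(\Omega)$; if $d(\mu(t),\nu)=+\infty$ the inequality is automatic. Otherwise, apply Theorem~\ref{thm:joint} to the sequence $\mu_n(t)\weakto\mu(t)$ and target $\nu$ to obtain $\nu_n\in X_n$ with $\nu_n\weakto\nu$, $\mathcal F_n(\nu_n)\to\mathcal F(\nu)$, and $d(\mu_n(t),\nu_n)\to d(\mu(t),\nu)$. Global stability of $\mu_n$ tested against $\nu_n$ reads
\begin{equation*}
\mathcal F_n(\mu_n(t)) - \int_\Omega f(\cdot,t)\,d\mu_n(t) \ \le\ d(\nu_n,\mu_n(t)) + \mathcal F_n(\nu_n) - \int_\Omega f(\cdot,t)\,d\nu_n.
\end{equation*}
Taking $\liminf$ on the left via the $\Gamma$-liminf of Theorem~\ref{th:main1-intro} and continuity of $\mu\mapsto\int f(\cdot,t)\,d\mu$ under narrow convergence, and $\lim$ on the right via the joint recovery construction, delivers the stability of $\mu(t)$.

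\medskip
\textbf{Step 3 (energy balance).} The $\ge$ direction comes from taking $\liminf$ in the discrete energy balance: the $\Gamma$-liminf gives $\liminf_n\mathcal F_n(\mu_n(t))\ge\mathcal F(\mu(t))$; narrow lower semicontinuity of the total variation functional $\mathcal D(\,\cdot\,,[0,t])$ follows from narrow lower semicontinuity of $d$ itself, which in turn uses tightness of optimal plans $\gamma_n\in\Gamma(\mu_n,\nu_n)$, closedness of the slip-plane constraint $\pi_2(x)=\pi_2(y)$ under weak convergence of plans, and continuity of the cost $|x-y|$; the loading and power terms converge by $f,\dot f\in C(\Omega)$ and dominated convergence in time. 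The reverse inequality is then extracted from the limit global stability and the continuity of the loading by the standard Riemann-sum argument in rate-independent systems (cf.~\cite{Mielke05a}).

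\medskip
\textbf{Main obstacle.} The critical point is verifying that Theorem~\ref{thm:joint} is applicable at every $t\in[0,T]$, since its hypotheses (Definition~\ref{def:classYng}) demand a minimal slip-plane spacing bounded below and per-slip-plane occupancies bounded above by prescribed powers of $n$. This is where the slip-plane confinement pays off: because each dislocation moves purely horizontally and neither creation nor annihilation is allowed, the vertical marginal $(\pi_2)_\#\mu_n(t)$ is independent of $t$, so both the set of active slip planes and the number of dislocations on each are preserved along the entire evolution. Hence if the initial approximations $\mu_n^0$ lie in the admissible class of Theorem~\ref{thm:joint}, then so does every $\mu_n(t)$, and the joint recovery sequence used in Step~2 is available at all times. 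A secondary technicality, the uniform-in-$t$ compactness extracted by Helly, is handled by the fact that $d$ dominates $W_1$ on the relevant subclass.
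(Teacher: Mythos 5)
Your proposal follows essentially the same route as the paper: derive uniform bounds on the dissipation from the energy balance, use slip-plane confinement to show that the vertical marginal and hence the $Y_n(\gamma,c)$ membership is preserved along the evolution, perform a Helly-type selection, pass to the limit in stability via the joint recovery sequences of Theorem~\ref{thm:joint}, and obtain the energy balance by the standard liminf-plus-stability argument. Your identification of the key obstacle (time-uniform applicability of Theorem~\ref{thm:joint}, resolved by conservation of $(\pi_2)_\#\mu_n(t)$) is exactly the paper's argument.

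One imprecision worth noting: in Step~1 you attribute the uniform lower bound $\mathcal F_n(\mu_n(t))\ge -C$ to equicoercivity implied by the $\Gamma$-convergence of Theorem~\ref{th:main1-intro}. $\Gamma$-convergence does not by itself yield a uniform lower bound on the approximating functionals. The paper establishes this bound separately and explicitly: it uses the decomposition~\eqref{renorm:F} together with the energy estimates~\eqref{Bclaim}--\eqref{Dclaim}, the uniform $H^1$-bound of Lemma~\ref{lemma:u-est} on the corrector $u_{\mu_n(t)}$, and the lower bound~\eqref{VboundC} on the interaction potential from Remark~\ref{rmk:bb}. The conclusion you need is true, but the provenance is this quantitative estimate, not the $\Gamma$-convergence statement. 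Your treatment of lower semicontinuity of $d$ (via tightness and closedness of the constraint set $\{\pi_2(x)=\pi_2(y)\}$) is an equally valid alternative to the paper's route through the approximations $d_{1,\e}$ in Lemma~\ref{lemma:props-d}.
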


\subsection{Strong formulations}
In Section~\ref{sec:strong-solutions} we show that, as we claimed above, the quasi-static formulation of Definition~\ref{def:quasistat} is equivalent to the strong formulation of~\eqref{eq:QSE-RN} when the solution is smooth. In that section, we also give a formal argument suggesting that the quasi-static evolution satisfied by the limiting measures $\mu$, as given by Theorem~\ref{th:plasticlimit}, has a similar structure, which we now describe.

We consider evolving measures $t\mapsto \mu(t)$ that are smooth in space and time, and which have the property that there exists a smooth scalar field $\phi = \phi(t,x)$ such that 
$$
\partial_t\mu +\partial_{x_1}( \phi\mu )= 0 \qquad\text{in the sense of distributions.}
$$
The field $\phi$ has the interpretation of the horizontal velocity of the dislocations.
Such a curve of measures solves the quasi-static evolution problem associated with $\widetilde{\mathcal F}$, i.e., conditions \eqref{0min-1} and \eqref{0e-bal-1}, if and only if
$$
-\phi(t,x) \;\partial_{x_1} \frac{\delta\widetilde{\mathcal F}}{\delta\mu}(\mu(t),t)
 = |\phi(t,x)| \qquad
 \text{for $\mu(t)$-a.e. $x$.}
$$
Here $\delta\widetilde{\mathcal F}/{\delta\mu}$ is the variational derivative of $\widetilde{\mathcal F}$, and its $x_1$-derivative at a point $x$ has the interpretation of the $x_1$-component of the force acting on a dislocation at $x$. 
Similarly to the discrete case, this implies that: 
\begin{quote}
In $\mu(t)$-a.e.\ $x$, \emph{either} $\phi(x,t)=0$, \emph{or} the total force $-[\partial_{x_1}\delta\widetilde{\mathcal F}/{\delta\mu}](\mu(t),t)(x)$ equals $\pm 1$; in the latter case $\phi$ has the same sign as the force.
\end{quote}\medskip

We briefly mention that the evolution systems of this paper, both at finite $n$ and in the limit $n\to\infty$, have formal counterparts as solutions of the so-called Energy-Dissipation Inequality, see e.g.\ \cite{MielkeRossiSavare09,MielkeRossiSavare12a}. However, the approach followed in these works generates a different class of solutions than Definition~\ref{def:quasistat}, since solutions of the Energy-Dissipation Inequality only jump when they lose  their \emph{local} minimality, while solutions of Definition~\ref{def:quasistat} jump as soon as they lose \emph{global} minimality.

\subsection{Discussion}\label{subsec:discussion}

The research of this paper was driven by several aims. To start with, we wanted to make a first step in connecting dynamic discrete-dislocation models with their upscaled dislocation-density counterparts in a mathematically rigorous way; the slip-plane confinement is an important aspect of this. Secondly, we wanted to formulate a clear $\Gamma$-convergence statement for edge dislocations \emph{in terms of dislocation densities} (or measures), which is more convenient for dislocation dynamics than the results of~\cite{GarroniLeoniPonsiglione10}, which are formulated in terms of the matrix field $\beta$. Finally, we wanted to bring the mathematical and the engineering literature on dislocations closer together; one striking difference is the preference in engineering to formulate problems in terms of the interaction energy $V$, while this interaction energy is nearly absent from the mathematical literature. Hence the explicit characterization of the $\Gamma$-limit in terms of $V$ (Theorem~\ref{th:main1-intro}).

These aims suggested a number of differences with the current literature. For instance, we chose to renormalize the elastic energy by subtracting the self-energy, which amounts to disregarding that energy contribution; in the context of the evolution of preserved dislocation numbers this makes sense, and leads to a simpler Gamma-convergence result (one single scaling regime instead of three regimes, as in~\cite{GarroniLeoniPonsiglione10}). We also restrict ourselves to the single-Burgers-vector case, since the treatment of cancellation of dislocations of opposite sign should arise from a separate modelling of the creation and annihilation of dislocation pairs.

The main methodological contributions of this paper lie in the details of the construction of the recovery sequences in the two Gamma-convergence results (Theorems~\ref{th:main1-intro} and~\ref{thm:joint}). Even without the slip-plane restrictions (Theorem~\ref{th:main1-intro}),  avoiding the singularity in the interaction potential requires careful placing of the dislocations. In the case of slip-plane confinement, the restrictions are much more severe, and subtle management of the different scales is required.\medskip

We now comment more in detail on some issues. 

\subsubsection{Conditions on the separation of dislocations} \label{cond-separation}
For Theorem~\ref{th:main1-intro} the sequence $\mu_n$ of empirical measures is required to satisfy two separation requirements: the defects should be separated from each other by at least $r_n\to0$ and from the boundary by at least the fixed distance $\ell>0$. 

The condition \eqref{hyp:enrn} on the distance $r_n$ arises from the difference between $K^n$ and $K$, which in turn arises from the difference between imposing the circulation condition via an integrated boundary condition as in~\eqref{EL-Ke} or a Dirac delta as in~\eqref{curl:K}. The interaction energy difference between the two is bounded by $O(\sqrt{\e_n/r_n^{3}})$, and condition \eqref{hyp:enrn} makes this difference small. Without this separation we believe the limiting energy to be the same, but we have no proof. 

The separation of the dislocations from the boundary $\partial \Omega$ is related to the choice of boundary conditions. With the no-stress boundary conditions that we impose at $\partial\Omega$, dislocations can reduce their energy by moving to the boundary (in essence they `vanish' at the boundary). Taking this possibility into account would require a modification of the renormalization term in~\eqref{def:curly-F-intro}, and we leave this to a future publication. Note that similar conditions are present in other work; for instance, the authors of~\cite{AlicandroDe-LucaGarroniPonsiglione13} prove the liminf inequality only for the case that no dislocations are `lost' in the limit. 

The condition that dislocations can only live in a rectangle $\mathcal R \subset \Omega$ is a simplification that proves to be very useful in the approximation results in Section~\ref{conv:evolution} and in the construction of the recovery sequence in Theorem~\ref{thm:joint}. A crucial point in our constructions is the possibility to spread out the dislocations horizontally and our assumption on $\mathcal R$ guarantees that the modified positions are still in the domain.

In addition, as pointed out before, for the joint recovery sequence in Theorem~\ref{thm:joint} we require the admissible measures 
to satisfy an upper bound on the maximum number of dislocations per slip plane and a lower bound on the minimum distance between slip planes, both in terms of powers of the number $n$ of dislocations (see Definition \ref{def:classYng}). This condition allows concentration on a slip plane or on a vertical line, but does not allow dislocations to be too close both horizontally and vertically at the same time.

\subsubsection{Related Work}

The asymptotic behaviour of the quadratic dislocation energy for edge dislocations was already studied by Garroni, Leoni and Ponsiglione in \cite{GarroniLeoniPonsiglione10}. One of the main difference with our work is that we consider the reduced energy \eqref{def:Fn} instead of \eqref{def:En}, and use as main variable the dislocation density rather than the strain. Moreover, while in \cite{GarroniLeoniPonsiglione10} the authors focus on the self-energy term of \eqref{def:En} in the case of edge dislocations with multiple Burgers vectors, we instead focus on the next term in the expansion, namely the interaction energy, and simplify by restricting to one Burgers vector. A similar analysis as in \cite{GarroniLeoniPonsiglione10} has been done in~\cite{DeLucaGarroniPonsiglione12} without the well-separation assumption that we make (see~\eqref{hyp:enrn}).

The focus on the interaction term of the energy for edge dislocations was already present in the work of Cermelli and Leoni \cite{CermelliLeoni06}. In \cite{CermelliLeoni06} the authors define the renormalised energy starting from a quadratic dislocation energy, and focus on the interaction term of the energy. They however consider the case of a finite number of dislocations with different Burgers vectors, and do not phrase their result in terms of $\Gamma$-convergence, but instead they keep $n$ fixed, and therefore express their renormalised energy in terms of the positions of dislocations, rather than densities.

For screw dislocations the interaction energy was derived from discrete models in \cite{AlicandroDe-LucaGarroniPonsiglione13}. Moreover in \cite{AlicandroDe-LucaGarroniPonsiglione13} the authors also considered the time-dependent case. More precisely they proved the convergence of the discrete gradient flow for the discrete dislocation energy with flat dissipation to the gradient flow of the renormalised energy.

As for the slip-plane-confined motion, the only related work in the mathematical domain that we know of is \cite{BlassFonsecaLeoniMorandotti}, where
the authors consider screw dislocations that may move along a finite set of directions. Such a system presents different mathematical difficulties, since each dislocation can, in theory, reach each point in the plane, in contrast to the single-slip-plane confinement of this paper.

Finally, there is an intriguing question that arises from the comparison with current continuum-scale modelling of plasticity (as in e.g.~\cite{GurtinAnand05,GurtinAnandLele07}). The limiting energy of Theorem~\ref{th:main1-intro} is non-local, with an interaction kernel that has no intrinsic length scale. However, `defect energies' in the continuum-level modelling are usually assumed to be local (see e.g.~\cite[Eq.~(8.8)]{GurtinAnandLele07} or \cite[Eq.~(6.16)]{GurtinAnand05}). It is unclear to us how these two descriptions can be reconciled.

\subsubsection{Extensions and open questions}
Various extensions of the present work would be relatively straightforward. The isotropy of $\C$ is only assumed for convenience, and without this property we expect similar results to hold. The type of loading in the quasi-static evolution (the form of~\eqref{loading-term}) is also chosen for convenience; we expect that other types of loading can be treated with minor changes, although we expect that the elasticity  problem should then be formulated in terms of displacements $u$ rather than elastic strains $\beta$. 

The current restriction on the set of admissible initial data (see Definition~\ref{def:classYng}) is not fully satisfactory. It would be very interesting, and useful, to understand wich class of quasi-static evolutions can be approximated using discrete systems.

In this work we restricted our attention to the single slip case, namely to the case of parallel slip planes, and with no loss of generality we considered $b=e_1$. We moreover assumed that all the dislocations are \textit{positive} dislocations. This further assumption excludes the case of annihilation and the presence of \textit{dipoles}. Natural extensions would be to allow for both positive and negative dislocations, and to consider the multiple Burgers vector case. It would also be very interesting to include creation and annihilation in such a model, allowing the total variation of the density of dislocations to change in time.

Another possible direction of investigation would be to consider different dissipations, e.g. the flat dissipation considered in \cite{AlicandroDe-LucaGarroniPonsiglione13} (but still keeping our slip-confinement condition); another example of this would be considering gradient-flow (quadratic-dissipation) time evolution rather than quasi-static evolution. 

One of the most interesting directions of extension is towards the three-dimensional case, where dislocations are three-dimensional curves, and entanglement between dislocations has a crucial role in evolution. For static Gamma-convergence very first results have recently been proved by Conti, Garroni, and Massaccesi~\cite{ContiGarroniMassaccesi13}, but the mathematical understanding of the three-dimensional situation is still very much in its infancy.

\subsection{Notation}
Here we list some symbols and abbreviations that are going to be used throughout the paper. Since the problem is in the setting of planar elasticity, all functions and 
vectors below are defined on a two-dimensional domain.

\begin{minipage}{15cm}
\begin{small}
\bigskip
\begin{tabular}{lll}
$a^\perp$ & vector orthogonal to $a\in \R^2$ obtained by a counterclockwise rotation by $\pi/2$\\
$\lfloor r\rfloor$ & integer part of $r\in \R$\\
$\skw F$ & skew-symmetric part of a matrix $F$, $\skw F = (F-F^T)/2$\\
$\sym F$ & symmetric part of a matrix $F$, $\sym F = (F+F^T)/2$\\
$Eu$ & symmetric gradient, $Eu = \sym\nabla u$\\
$\mathcal{L}^1$, $\mathcal{L}^2$ & one- and two-dimensional Lebesgue measure\\
$\mathcal{H}^1$ & one-dimensional Hausdorff measure\\
$\pi_i$ & projection onto the coordinate $e_i$\\
$\mathcal{P}(\Omega)$ &  non-negative Borel measures on $\Omega$ of mass $1$ \\
$|\mu|(A)$ & total variation of $\mu\in\mathcal P(\Omega)$ in $A\subseteq \Omega$\\
$f_{\#}\mu$ & push forward of $\mu$ by $f$\\
$\beta_\mu$  & {elastic strain associated with $\mu$} & Sec.~\ref{subsec:aux-functions}\\
$K_z^n, K_z$ & {singular strains associated with $\mu$} & Sec.~\ref{subsec:aux-functions}\\
$u_\mu, v_{\mu}$ & displacements associated with $\mu$ & Sec.~\ref{subsec:aux-functions}\\
$I_{n,\mu}$, $I_{\mu}$ & auxiliary functionals associated with $\mu$ & Sec.~\ref{subsec:aux-functions}\\
$d$ & slip-plane-confined Wasserstein distance & \eqref{def:d}\\
$d_1$ & Monge-Kantorovich distance  with cost $c(x,y) = |x-y|$  & \cite{Villani03}\\
$\Gamma(\mu,\nu)$   & set of couplings of $\mu$ and $\nu$ with only horizontal transport & \eqref{def:Gamma}\\
$\Gamma_1(\mu,\nu)$ & set of couplings of $\mu$ and $\nu$\hphantom{obtained by a counterclockwise rotation by $\pi/2$}& \cite{Villani03}\\
$\mathcal F_n$ & renormalized energy & \eqref{renorm:F}\\
$\widetilde{\mathcal F}_n$ & total energy & \eqref{def:tildeFn}\\
$X_n$ & set of admissible discrete dislocation measures & \eqref{def:Xn}\\
$Y_n(\gamma,c)$ & set of admissible recovery measures & \eqref{def:Yng}\\
$\mathcal{P}_{\gamma,c}^\infty(\Omega)$ & set of admissible limit measures & Lemma~\ref{lemma:char-Pinfty}\\
\end{tabular}
\end{small}
\end{minipage}

\section{Definitions and preliminaries}

In the following $\Omega\subset\R^2$ is a simply connected domain with Lipschitz boundary.  
We moreover assume that it contains a closed rectangle $\mathcal R$ whose sides are parallel to the coordinate axes and with $\dist(\mathcal R,\partial \Omega)= \ell>0$.

Before turning to the main theorems of this paper, starting from Section~\ref{sec:Gamma-conv-1}, we introduce five auxiliary functions (Section~\ref{subsec:aux-functions}), and use these to rewrite the renormalized energy (Section~\ref{subsec:renormalizedEnergy}).

\subsection{Auxiliary functions}
\label{subsec:aux-functions}
In the arguments of this paper, five auxiliary functions play an important role: $K$, $K^n$, $\beta_\mu$, $\umu$, and $v_\mu$. We now introduce these. For the notation we follow~\cite{CermelliLeoni06}, and we set the Burgers vector $b=e_1$.\medskip

\textbf{The matrix-valued function \boldmath$K$} is defined as
$$
K(x;z):=\frac{1}{2\pi |x-z|^2} e_1\otimes (x-z)^\perp +\nabla v(x-z) \quad
\text{for every }x\neq z,
$$
where 
$$
v(x):= -\frac{\mu\log|x|}{2\pi(\lambda+2\mu)} e_2 -\frac{\lambda+\mu}{4\pi(\lambda+2\mu)|x|^2}
\big[ (e_1\cdot x^\perp) x + (e_1\cdot x) x^\perp\big]
$$
and $\lambda, \mu$ are the Lam\'e constants of the stress-strain tensor $\C$.
The function $K(\cdot;z)$ is the strain field in $\R^2$ generated by a single dislocation at $z$, with Burgers vector $e_1$, and is a distributional solution of 
\begin{equation}\label{curl:K}
\begin{cases}
\textrm{div} \,\mathbb{C}K(\cdot;z)=0 \quad &\textrm{in } \R^2,\\
\textrm{Curl}\,K(\cdot;z) = e_1\delta_z \quad &\textrm{in } \R^2.
\end{cases}
\end{equation}
For brevity we often write $K_z$ for $K(\cdot;z)$.\medskip

\textbf{The matrix-valued function \boldmath$K^n$} is a perturbation of $K$, defined by
\begin{equation}\label{def:Ke}
K^n(x;z):= K(x;z)+\e_n^2\nabla w(x-z),
\end{equation}
with
$$
w(x):=\frac{(\lambda+\mu)}{2\pi(\lambda+2\mu)|x|^4}\big[ (e_1\cdot x^\perp) x + (e_1\cdot x) x^\perp\big].
$$
As before, we will often write $K^n_z(\cdot)$ and $w_z(\cdot)$ instead of $K^n(\cdot;z)$ and $w(\cdot-z)$.
The function $K^n$ is also the strain generated by a dislocation at $z$, but in the context of the exterior domain $\R^2\setminus B_{\e_n}(z)$; the mismatch associated with the dislocation is enforced by a combination of a stress-free condition and a circulation condition on the boundary:
\begin{equation}\label{EL-Ke}
\begin{cases}
{\rm div}\,\C K^n(\cdot\,; z) = 0 & \text{ in } \R^2\setminus B_{\e_n}(z), \\
\C K^n(\cdot\,;z)\nu =0  &\text{ on }\partial B_{\e_n}(z),\\
\displaystyle\int_{\partial B_{\e_n}(z)}K^n(x;z)\, \tau(x)\, d\HH^1(x)=e_1.
\end{cases}
\end{equation}
The functions $K$ and $K^n$ have the same far-field behaviour, and $K^n$ converges pointwise to $K$ as $\e_n\to0$.\medskip

For given $\mu\in X_n$, \textbf{the matrix-valued function \boldmath$\beta_\mu$} is the unique (up to skew-symmetric matrices) minimizer of the energy $E_n$ in~\eqref{def:En} subject to the circulation constraint~\eqref{def:Aemu}; it is therefore the strain field generated by the dislocations that $\mu$ represents.
It satisfies the equations
\begin{equation}\label{EL-Ge}
\begin{cases}
{\rm div}\,\C \Gmu = 0 & \text{ in } \Omega_n(\mu),\\
\C \Gmu\nu = 0 \quad &\text{ on } \partial\Omega_n(\mu).\\
\end{cases}
\end{equation}
\medskip

For given $\mu\in X_n$, \textbf{the vector-valued function \boldmath$\umu$} is a displacement generated by the dislocations, as follows: since the left-hand side of
\begin{equation}
\label{Gmu-dec}
\Gmu(x)- \frac1n \sum_{i=1}^{\n} K^n(x,z_i) =\nabla \umu(x) \qquad \text{for } x\in \Omega_n(\mu)
\end{equation}
is curl-free in $\Omega_n(\mu)$, with zero-circulation boundary conditions on each of the $\partial B_{\e_n}(z_i)$, it is the gradient of a function $\umu$. It can be interpreted as a corrector displacement field, which cancels the non-stress-free boundary values of $\tfrac1n\sum_i K_{z_i}^n$.

We choose a fixed ball $B$ contained in $\Omega$ such that $\dist(x,\partial\Omega)<\ell/2$
for every $x\in B$ (and thus, contained in $\Omega_n(\mu)$). We will require that 
\begin{equation}\label{normal}
\int_B \umu(x)\, dx =0, \quad \int_B {\rm skew}\, \nabla\umu(x)\, dx=0.
\end{equation}

The function $u_{\mu}$ can be alternatively characterised in terms of a minimisation problem for the functional $I_{n,\mu}$ defined as  
\begin{equation}
\label{def:Inmu}
I_{n,\mu}(u) := \frac12 \int_{\Omega_n(\mu)}\C\nabla u(x):\nabla u(x)\, dx
+ \frac1n \sum_{i=1}^{\n}\int_{\partial\Omega_n(\mu)}\C K^n(x;z_i)\nu(x)\cdot u(x)\, d\HH^1(x).
\end{equation}
This will be proved in Lemma~\ref{lemma:u-est} at the end of this section.\medskip

\textbf{The vector-valued function \boldmath$v_\mu$} is very similar to $\umu$: for given $\mu\in\P(\Omega)$, it is the unique minimizer of 
\begin{equation}
\label{def:Imu}
I_\mu(v):= \frac12\int_\Omega \C\nabla v(x):\nabla v(x)\,dx+ \int_\Omega\int_{\partial\Omega} \C K(x;y)\nu(x)\cdot v(x)\,d\HH^1(x)\,d\mu(y)
\end{equation}
on the class 
\begin{equation}
\label{def:Iclass}
\Big\{v\in H^1(\Omega;\R^2): \ \int_B v\, dx =0, \ \int_B {\rm skew}\, \nabla v\, dx=0
\Big\}.
\end{equation}
Note the similarity between $I_{n,\mu}$, which defines $\umu$, and $I_\mu$: if we  take $\mu = \frac1n \sum_i \delta_{z_i}$, then  $I_{n,\mu}$ and $I_\mu$ are very similar, differing only in the distinction between $K^n$ and $K$ and in the domain of integration. Indeed we will see below (proof of Theorem~\ref{ups:density}) that if $\mu_n\weakto\mu$, then $u_{\mu_n}$ converges to $v_\mu$ and $I_{n,\mu_n}(u_{\mu_n})\to I_\mu(v_\mu)$.

\subsection{Rewriting the energy}
\label{subsec:renormalizedEnergy}

Recall that the energies $F_n$ and $\mathcal F_n$ are defined in~\eqref{def:Fn} and~\eqref{def:curly-F-intro} in terms of the minimiser $\beta_\mu$ of the energy $E_n$ in~\eqref{def:En} subject to the circulation constraint~\eqref{def:Aemu}. 

Using~\eqref{Gmu-dec} we rewrite $F_n(\mu)$ for $\mu\in X_n$ as 
\begin{align}
F_{n}(\mu) =  & \, \frac12\int_{\Omega_n(\mu)}\C \Gmu:\Gmu\,dx 
\nonumber
\\
 = & \,\frac1{2n^2}\sum_{i=1}^{\n}\int_{\Omega_n(\mu)}\C K^n(x;z_i):K^n(x;z_i)\, dx + \frac1{2n^2} \sum_{i=1}^{\n}\sum_{j\neq i}
\int_{\Omega_n(\mu)}\C K^n(x;z_i):K^n(x;z_j)\, dx
\nonumber
\\
&\,+ \frac1n \sum_{i=1}^{\n}\int_{\Omega_n(\mu)}\C K^n(x;z_i):\nabla \umu(x)\, dx
+\frac12\int_{\Omega_n(\mu)}\C \nabla \umu(x):\nabla \umu(x)\, dx.
\label{energy-dec}
\end{align}
Note that the first term in the above expression represents the \textit{self-energy} contributions of the dislocations, while the second term 
represents the \textit{mesoscopic} pairwise interaction energy between two dislocations located at $z_i$ and $z_j$. 
Concerning the last two terms, integrating by parts and applying~\eqref{EL-Ke} yield
\begin{multline*}
\frac1n\sum_{i=1}^{\n}\int_{\Omega_n(\mu)}\C K^n(x;z_i):\nabla \umu(x)\, dx
\\
= \frac1n\sum_{i=1}^{\n}\int_{\partial\Omega}\C K^n(x;z_i)\nu\cdot \umu(x)\, d\HH^1(x)
- \frac1n\sum_{i=1}^{\n}\sum_{j\neq i}\int_{\partial B_{\e_n}(z_j)}\C K^n(x;z_i)\nu\cdot \umu(x)\, d\HH^1(x).
\end{multline*}
Similarly, using \eqref{EL-Ke}--\eqref{Gmu-dec} we obtain
\begin{multline}
\label{Ke-bdary2}
\int_{\Omega_n(\mu)}\C \nabla \umu(x):\nabla \umu(x)\, dx
\\
=-\frac1n\sum_{i=1}^{\n}\int_{\partial\Omega}\C K^n(x;z_i)\nu\cdot \umu(x)\, d\HH^1(x)
+\frac1n\sum_{i=1}^{\n}\sum_{j\neq i}\int_{\partial B_{\e_n}(z_j)}\C K^n(x;z_i)\nu\cdot \umu(x)\, d\HH^1(x).
\end{multline}
By these computations we have that for $\mu\in X_n$
\begin{align}
\nonumber
{\mathcal F}_n(\mu)&= \frac1{2n^2} \sum_{i=1}^{\n}\sum_{j\neq i}
\int_{\Omega_n(\mu)}\C K^n_{z_i}:K^n_{z_j}\\
&\qquad {}+\frac1{2n} \sum_{i=1}^{\n}\int_{\partial\Omega}\C K^n_{z_i}\nu\cdot \umu\, d\HH^1
-\frac1{2n} \sum_{i=1}^{\n}\sum_{j\neq i}\int_{\partial B_{\e_n}(z_j)}\C K^n_{z_i}\nu\cdot \umu\, d\HH^1\label{renorm:F-}\\
&= \frac1{2n^2} \sum_{i=1}^{\n}\sum_{j\neq i}\int_{\Omega_n(\mu)}\C K^n_{z_i}:K^n_{z_j} 
 +\frac1{2n} \sum_{i=1}^{\n}\int_{\partial\Omega_n(\mu)}\C K^n_{z_i}\nu\cdot \umu\, d\HH^1,
\label{renorm:F}
\end{align}
where in the last step we used the boundary condition~\eqref{EL-Ke} to add the missing term in the double sum.

The two terms above will result in the two terms in the limiting energy $\mathcal F$ (see~\eqref{def:limitF}). To recognize the second term, note that by the same calculation~\eqref{Ke-bdary2}, the second term is equal to $I_{n,\mu }(u_\mu)$, which will converge in the limit to $I_\mu(v_\mu)$.\medskip

The energy decomposition above proves useful for the characterisation of $u_\mu$ in the following lemma.

\begin{lemma}\label{lemma:u-est}
Assume \eqref{hyp:enrn}
and let $\mu=\frac1{\n} \sum_{i=1}^{\n} \delta_{z_i}\in X_n$ for some $n\in\N$. Then there exists a unique $\umu\in H^1(\Omega_n(\mu);\R^2)$ that satisfies \eqref{Gmu-dec} and \eqref{normal}. Moreover, there exist a constant $C>0$, independent of $n$, $\mu$, and $\umu$, and an extension $\tilde u_\mu\in H^1(\Omega;\R^2)$ of $\umu$
such that 
\begin{equation}\label{ue-nb}
\|\tilde u_\mu\|_{H^1(\Omega)}\leq C \|E\umu\|_{L^2(\Omega_n(\mu))}\leq C.
\end{equation}
In addition, the function $\umu$ minimizes the functional $I_{n,\mu}$ defined in \eqref{def:Inmu} on $H^1(\Omega_n(\mu);\R^2)$.
\end{lemma}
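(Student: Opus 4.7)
The plan is to carry out four interlocking steps: (i) construct $\umu$ as a vector potential for $\Gmu-\tfrac1n\sum K^n_{z_i}$; (ii) obtain a bounded extension $\tilde\umu\in H^1(\Omega;\R^2)$ and use Korn's inequality together with \eqref{normal} to control $\|\tilde\umu\|_{H^1(\Omega)}$ by $\|E\umu\|_{L^2(\Omega_n(\mu))}$; (iii) identify $\umu$ as the unique minimizer of $I_{n,\mu}$; and (iv) test this minimality against $u=0$ to derive the uniform bound $\|E\umu\|_{L^2(\Omega_n(\mu))}\leq C$.

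For (i), set $\alpha:=\Gmu-\tfrac1n\sum_i K^n(\cdot;z_i)$ on $\Omega_n(\mu)$. The field $\alpha$ is curl-free row-wise: $\curl\Gmu=0$ on $\Omega_n(\mu)$ by \eqref{def:Aemu}, and each $\curl K^n(\cdot;z_i)=0$ on $\R^2\setminus B_{\e_n}(z_i)$ by \eqref{EL-Ke}. Its circulation around each $\partial B_{\e_n}(z_k)$ vanishes: $\Gmu$ contributes $b/n$, $\tfrac1n K^n(\cdot;z_k)$ contributes $b/n$, and for $i\neq k$ the terms $\tfrac1n K^n(\cdot;z_i)$ contribute $0$ by Stokes on $B_{\e_n}(z_k)\subset\R^2\setminus B_{\e_n}(z_i)$. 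Hence $\alpha=\nabla\umu$ for some $\umu\in H^1(\Omega_n(\mu);\R^2)$, unique up to an affine rigid motion $x\mapsto Wx+c$ with $W$ skew (the freedom coming from the non-uniqueness of $\Gmu$ modulo constant skew matrices); the normalizations in \eqref{normal} pin down $W$ and $c$. For (ii), the separation $\e_n/r_n\to 0$ from \eqref{hyp:enrn} guarantees that the $\overline B_{\e_n}(z_i)$ sit strictly inside the mutually disjoint concentric disks $\overline B_{r_n/2}(z_i)\subset\Omega$ (the latter since $\dist(\mathcal R,\partial\Omega)=\ell>0$). On each annulus, after rescaling $y=(x-z_i)/\e_n$ to fixed shape, a standard Sobolev extension across $B_{\e_n}(z_i)$ has operator norm independent of $i,n$; patching produces $\tilde\umu\in H^1(\Omega;\R^2)$ with $\|\tilde\umu\|_{H^1(\Omega)}\leq C\|\umu\|_{H^1(\Omega_n(\mu))}$ and $\|E\tilde\umu\|_{L^2(\Omega)}\leq C\|E\umu\|_{L^2(\Omega_n(\mu))}$. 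Korn's inequality on $\Omega$, combined with \eqref{normal}—which annihilates the infinitesimal rigid-motion component of $\tilde\umu$—then yields the first inequality $\|\tilde\umu\|_{H^1(\Omega)}\leq C\|E\umu\|_{L^2(\Omega_n(\mu))}$.

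For (iii), integrate by parts using $\div\,\C K^n(\cdot;z_i)=0$ to rewrite $I_{n,\mu}(u)=\tfrac12\int\C\nabla u:\nabla u+\tfrac1n\sum_i\int\C K^n_{z_i}:\nabla u$, so that, modulo a constant depending only on $\mu$,
\[
I_{n,\mu}(u)=\tfrac12\int_{\Omega_n(\mu)}\C\bigl(\nabla u+\tfrac1n\textstyle\sum K^n_{z_i}\bigr):\bigl(\nabla u+\tfrac1n\sum K^n_{z_i}\bigr)dx+\mathrm{const}.
\]
As $u$ varies over $H^1(\Omega_n(\mu);\R^2)$, the field $\tfrac1n\sum K^n_{z_i}+\nabla u$ ranges exactly over $\A_n(\mu)$ (the circulation in \eqref{def:Aemu} is automatically matched by \eqref{EL-Ke}), so the minimum is attained exactly when this field equals $\Gmu$, which by \eqref{Gmu-dec} singles out $u=\umu$.

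For (iv), testing $I_{n,\mu}(\umu)\leq I_{n,\mu}(0)=0$ gives
\[
\int_{\Omega_n(\mu)}\C\nabla\umu:\nabla\umu\,dx\leq-\tfrac2n\sum_i\int_{\partial\Omega_n(\mu)}\C K^n_{z_i}\nu\cdot\umu\,d\HH^1.
\]
Using $\C K^n_{z_i}\nu=0$ on $\partial B_{\e_n}(z_i)$ from \eqref{EL-Ke}, split the right-hand side into a boundary integral on $\partial\Omega$—uniformly bounded by $C\|\tilde\umu\|_{H^1(\Omega)}\leq C\|E\umu\|_{L^2}$ because $|K^n_{z_i}|\leq C/\ell$ on $\partial\Omega$—and a double sum over $\partial B_{\e_n}(z_j)$ with $i\neq j$. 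The key trick for the double sum is that $K^n_{z_i}$ is smooth in $B_{\e_n}(z_k)$ when $i\neq k$ with $\div\,\C K^n_{z_i}=0$, so the divergence theorem applied to $\tilde\umu$ converts
\[
\int_{\partial B_{\e_n}(z_k)}\C K^n_{z_i}\nu\cdot\tilde\umu\,d\HH^1=-\int_{B_{\e_n}(z_k)}\C K^n_{z_i}:\nabla\tilde\umu\,dx.
\]
Using $|K^n_{z_i}|\leq C/|z_i-z_k|\leq C/r_n$ on $B_{\e_n}(z_k)$ (since $|z_i-z_k|\geq r_n$ and $\e_n\ll r_n$), Cauchy--Schwarz combined with the packing bound $\sum_{i\neq k}|z_i-z_k|^{-1}\lesssim\sqrt n/r_n$ and \eqref{hyp:enrn} gives a total contribution of order $(\e_n/r_n)\|\nabla\tilde\umu\|_{L^2(\Omega)}=o(1)\|E\umu\|_{L^2}$. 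Altogether $\|E\umu\|_{L^2(\Omega_n(\mu))}^2\leq C\|E\umu\|_{L^2(\Omega_n(\mu))}$, hence the uniform bound. The main obstacle is this last step: one must trade the singular trace on $\partial B_{\e_n}(z_k)$ for a tame volume integral via the divergence theorem and then carry out careful geometric accounting over the dislocation cloud, and it is precisely here that the separation conditions in \eqref{hyp:enrn}, in particular $r_n n\to 0$ and $\e_n/r_n^{3}\to 0$, enter essentially.
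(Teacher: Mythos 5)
Your argument is correct and establishes the lemma. Steps (i)--(iii) are essentially the same as the paper's: existence via the curl-free structure, uniqueness from the normalization~\eqref{normal}, extension plus Korn/Poincar\'e to get $\|\tilde u_\mu\|_{H^1(\Omega)}\le C\|Eu_\mu\|_{L^2(\Omega_n(\mu))}$ (the paper invokes a reflection/extension lemma from the literature, you sketch the rescaling; same content), and identification of $u_\mu$ as the minimizer of $I_{n,\mu}$ via the quadratic structure (the paper ``develops the quadratic form'' starting from $F_n(\mu)\le E_n(\mu,\beta)$; your completion of the square is the same computation in a tidier form).

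The genuine divergence is in step (iv). The paper bounds the problematic double sum $\frac1n\sum_i\sum_{j\ne i}\int_{\partial B_{\e_n}(z_j)}\C K^n_{z_i}\nu\cdot u_\mu$ by estimating traces of $u_\mu$ directly on the small circles $\partial B_{\e_n}(z_j)$ via a one-dimensional Fubini argument on the annuli $B_{r_n}(z_j)\setminus B_{\e_n}(z_j)$, which produces a $1/\sqrt{r_n}$ factor. You instead apply the divergence theorem to the \emph{extended} function $\tilde u_\mu$ inside the removed ball, converting the singular boundary trace into the volume integral $-\int_{B_{\e_n}(z_j)}\C K^n_{z_i}:\nabla\tilde u_\mu$, and then close the estimate with Cauchy--Schwarz, the decay bound $|K^n_{z_i}|\lesssim |z_i-z_j|^{-1}$ on $B_{\e_n}(z_j)$, and the packing bound $\sum_{i\ne j}|z_i-z_j|^{-1}\lesssim \sqrt n/r_n$. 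This yields a final error of order $\e_n/r_n\to 0$, which is cleaner than and at least as sharp as the paper's $\sqrt{\e_n}/r_n^{3/2}$; both are killed by $\e_n/r_n^3\to0$ in~\eqref{hyp:enrn}. Your route also makes the role of the extension $\tilde u_\mu$ more transparent: it is not only used for Korn's inequality and the outer boundary trace but becomes the natural object for trading circle integrals for disk integrals. The only caveat: you should be explicit that only terms with $i\ne j$ arise (because $\C K^n_{z_j}\nu=0$ on $\partial B_{\e_n}(z_j)$), which you do in passing, and that the divergence theorem carries a sign depending on the orientation of $\nu$ on $\partial B_{\e_n}(z_j)$ inherited from $\partial\Omega_n(\mu)$, which is immaterial after taking absolute values.
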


\begin{proof}
As mentioned before, the curl-free property of the left-hand side of~\eqref{Gmu-dec} implies that there exists  some $\umu\in H^1(\Omega_n(\mu);\R^2)$ satisfying~\eqref{Gmu-dec}. Since the minimizer of $E_n(\mu,\cdot)$ in $\mathcal A_n(\mu)$
is unique up to addition of skew-symmetric matrices, condition \eqref{normal} guarantees the uniqueness of~$\umu$.

We now prove \eqref{ue-nb}.
For every $u\in H^1(\Omega_n(\mu);\R^2)$ the function
$$
\beta(x):=\chi_{\Omega_n(\mu)}(x)\Big(\frac1n \sum_{i=1}^{\n} K^n(x;z_i)+\nabla u(x)\Big)
$$
belongs to ${\mathcal A}_n(\mu)$. Thus, by \eqref{def:Fn} we have
$$
F_n(\mu)\leq E_n(\mu,\beta)=\frac12 \int_{\Omega_n(\mu)}\C\beta:\beta\, dx.
$$
Developing the quadratic form on the right-hand side and taking into account the decomposition \eqref{energy-dec} and 
the first equation in \eqref{EL-Ke}, one easily deduces that the function $\umu$
minimizes the functional $I_{n,\mu}$ on $H^1(\Omega_n(\mu);\R^2)$.
In particular, considering $u\equiv0$ as a competitor and using the coercivity of $\C$
on symmetric matrices, we deduce
\begin{multline}\label{ue-bound1}
C_1\| E\umu\|^2_{L^2(\Omega_n(\mu))}
\\
\leq \frac1n \sum_{i=1}^{\n} \|\C K^n(\cdot\,;z_i)\|_{L^2(\partial\Omega)}\|\umu\|_{L^2(\partial\Omega)}
+ \frac1n \sum_{i=1}^{\n}\sum_{j\neq i}
\|\C K^n(\cdot\,;z_i)\|_{L^2(\partial B_{\e_n}(z_j))}\|\umu\|_{L^2(\partial B_{\e_n}(z_j))}.
\end{multline}
From the definition \eqref{def:Ke} of $K^n$, the definition \eqref{def:Xn} of the admissible class, and assumption \eqref{hyp:enrn} it follows that
$$
\sup_{x\in\partial\Omega}|\C K^n(x;z_i)|\leq C\sup_{x\in\partial\Omega} \Big(\frac{1}{|x-z_i|}
+\e_n^2 \frac{1}{|x-z_i|^3}\Big) \leq C\Big(\frac1\ell+\frac{\e_n^2}{\ell^3}\Big)\leq C,
$$
and, analogously, for $i\neq j$,
\begin{equation}\label{supKe-Be}
\sup_{x\in\partial B_{\e_n}(z_j)}|\C K^n(x;z_i)|
\leq C\sup_{x\in\partial B_{\e_n}(z_j)} \Big(\frac{1}{|x-z_i|}
+\e_n^2 \frac{1}{|x-z_i|^3}\Big)
\leq C\Big(\frac{1}{r_n}+\frac{\e_n^2}{r_n^3}\Big)\leq \frac{C}{r_n}.
\end{equation}
Combining the two inequalities above with \eqref{ue-bound1}, we have
\begin{equation}\label{ue-bound2}
\| E\umu\|^2_{L^2(\Omega_n(\mu))}
\leq C  \|\umu\|_{L^2(\partial\Omega)}
+ C\frac{\sqrt{\e_n} }{r_n}\sum_{j=1}^{\n}
\|\umu\|_{L^2(\partial B_{\e_n}(z_j))}.
\end{equation}

To control the norms of $\umu$ on the right-hand side, we proceed as follows.
By Theorem~\ref{thm:extension} in the Appendix we can extend $\umu$ to a function $\tilde u_\mu\in H^1(\Omega;\R^2)$
such that
$$
\|E\tilde u_\mu\|_{L^2(\Omega)}\leq C\|Eu_\mu\|_{L^2(\Omega_n(\mu))},
$$
where $C$ is a constant independent of $n$, $\mu$, and $\umu$. Moreover, since $\tilde u_\mu=\umu$ on the ball $B$ where
\eqref{normal} is satisfied, by the Korn and Poincar\'e inequalities we have
\begin{equation}\label{KP-utilde}
\|\tilde u_\mu\|_{H^1(\Omega)}\leq C \|E\tilde u_\mu\|_{L^2(\Omega)}\leq C\|E\umu\|_{L^2(\Omega_n(\mu))},
\end{equation}
and by the continuity of the trace operator on $\partial\Omega$
\begin{equation}\label{trace-utilde}
\|\umu\|_{L^2(\partial\Omega)}= \|\tilde u_\mu\|_{L^2(\partial\Omega)}\leq C
\|\tilde u_\mu\|_{H^1(\Omega)}\leq C\|E\umu\|_{L^2(\Omega_n(\mu))}.
\end{equation}

To estimate the norm of $\umu$ on the boundaries $\partial B_{\e_n}(z_j)$ we use polar coordinates centered
at the point $z_j$ on the annulus $B_{r_n}(z_j)\setminus B_{\e_n}(z_j)$.
For every $\theta\in[0,2\pi]$ and every $\e_n\leq s\leq r\leq r_n$ we may write
$$
\umu(s,\theta)=\umu(r,\theta)-\int_s^r\frac{\partial \umu}{\partial\rho}(\rho,\theta)\,d\rho.
$$
Thus,
$$
|\umu(s,\theta)|^2\leq 2|\umu(r,\theta)|^2+ 2r_n\int_s^{r_n}\Big|\frac{\partial \umu}{\partial\rho}(\rho,\theta)\Big|^2d\rho.
$$
Integrating over $[0,2\pi]$ yields
$$
\int_0^{2\pi} \e_n |\umu(s,\theta)|^2\, d\theta
\leq 2\int_0^{2\pi}r|\umu(r,\theta)|^2\, d\theta+ 2 r_n\int_{B_{r_n}(z_j)\setminus B_{\e_n}(z_j)}
|\nabla \umu|^2\, dx.
$$
We now integrate over $r\in[\e_n,r_n]$ and divide by $r_n-\e_n$:
$$
\int_{\partial B_{\e_n}(z_j)} |\umu|^2\, d\HH^1
\leq \frac{2}{r_n-\e_n}\int_{B_{r_n}(z_j)\setminus B_{\e_n}(z_j)}
|\umu|^2\, dx+ 2r_n\int_{B_{r_n}(z_j)\setminus B_{\e_n}(z_j)}
|\nabla \umu|^2\, dx.
$$
Hence, by \eqref{KP-utilde} we conclude that
\begin{equation}\label{tr-Be}
\sum_{j=1}^{\n}\| \umu\|_{L^2(\partial B_{\e_n}(z_j))} 
\leq \frac{C}{\sqrt{r_n}} \|\umu\|_{H^1(\Omega_n(\mu))}\leq
\frac{C}{\sqrt{r_n}} \|\tilde u_\mu\|_{H^1(\Omega)}
\leq  \frac{C}{\sqrt{r_n}} \|E \umu\|_{L^2(\Omega_n(\mu))}.
\end{equation}

Combining \eqref{ue-bound2}, \eqref{trace-utilde}, and  \eqref{tr-Be}, and taking into account \eqref{hyp:enrn}, we obtain
$$
\| E \umu\|^2_{L^2(\Omega_n(\mu))}
\leq C \|E\umu\|_{L^2(\Omega_n(\mu))}
+ \frac{C\sqrt\e_n }{r_n^{3/2}}\|E\umu\|_{L^2(\Omega_n(\mu))}
\leq C \|E\umu\|_{L^2(\Omega_n(\mu))}.
$$
Therefore, using also \eqref{KP-utilde}, we deduce \eqref{ue-nb}.
\end{proof}

%%%%%%%%%%%%%%%%%%%%%%%%%%%%%%%%%%%%%%
%%%%%%%%%%%%%%%%%%%%%%%%%%%%%%%%%%%%%%
%%%%%%%%%%%%%%%%%%%%%%%%%%%%%%%%%%%%%%

\section{$\Gamma$-convergence of the renormalized energy}
\label{sec:Gamma-conv-1}
In this section we prove Theorem~\ref{th:main1-intro}, the Gamma-convergence of the renormalised energy defined in \eqref{def:curly-F-intro}. 
Before stating and proving the main theorem we state some useful properties of the interaction potential $V$ appearing in the definition of the limit energy.

\begin{lemma}\label{lemma:Vcont}
The function $V:\Omega\times\Omega\to\R\cup\{+\infty\}$ 
defined as 
\begin{equation}\label{def:V}
V(y,z):=\begin{cases}
\displaystyle \int_\Omega \C K(x;y):K(x;z)\, dx & \text{ if } y\neq z,
\smallskip
\\
+\infty & \text{ if } y=z,
\end{cases}
\end{equation}
is well defined, symmetric, and continuous. Moreover, we have the following estimates:
\begin{itemize}
\item there exist two constants $C>0$ and $L>0$ such that for every $y,z\in\Omega$ with $y\neq z$
\begin{equation}\label{imp-est}
|V(y,z)|\leq C \Big(1 - \log \frac{|y-z|}{L}\Big);
\end{equation}
\item for every open set $\Omega'$ compactly contained in $\Omega$ there exist $C(\Omega')>0$ and $\overline R>0$ such that for every $y,z\in\Omega'$ with $0<|y-z|\leq \overline R$
\begin{equation}\label{below-est}
C(\Omega')(1-\log|y-z|)\leq V(y,z).
\end{equation}
\end{itemize}
\end{lemma}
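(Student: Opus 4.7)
The plan is to work through, in order, well-definedness and symmetry, the upper bound \eqref{imp-est}, the lower bound \eqref{below-est}, and then deduce continuity from these.

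First, from the explicit formulas of Section~\ref{subsec:aux-functions} one obtains the pointwise bound $|K(x;w)|\leq C_0/|x-w|$ on $\R^2\setminus\{w\}$, with $C_0$ depending only on $\lambda,\mu$. For $y\neq z$ the integrand is then bounded by $C/(|x-y||x-z|)$, which is integrable on the bounded set $\Omega$ (each factor lies in $L^p_{\mathrm{loc}}$ for $p<2$; apply H\"older). Hence $V(y,z)$ is finite off the diagonal, and symmetry $V(y,z)=V(z,y)$ is inherited from the symmetries of $\C$. Off-diagonal continuity will then follow by dominated convergence, using the uniform version of \eqref{imp-est} established next.

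For the upper bound \eqref{imp-est}, I set $r:=|y-z|$, $L:=4\operatorname{diam}\Omega$, and decompose
$$
\Omega \;=\; \bigl(\Omega\cap B_{r/2}(y)\bigr)\cup\bigl(\Omega\cap B_{r/2}(z)\bigr)\cup A_\infty.
$$
On the first ball, $|x-z|\geq r/2$ gives $|K_z(x)|\leq 2C_0/r$, so the contribution is bounded by $\frac{C}{r}\int_{B_{r/2}(y)}|x-y|^{-1}\,dx\leq C'$, and similarly for the second ball. On $A_\infty$ both denominators exceed $r/2$; using $|x-y||x-z|\geq\tfrac{r}{2}\max(|x-y|,|x-z|)$ and a polar-coordinates estimate within $\Omega\subset B_L$ yields $\int_{A_\infty}|x-y|^{-1}|x-z|^{-1}\,dx\leq C(1+\log(L/r))$. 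Summing the three contributions gives \eqref{imp-est}.

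The lower bound \eqref{below-est} is the delicate step. Decompose $K=S+R$, where $S(x;w):=\frac{1}{2\pi|x-w|^2}\,e_1\otimes(x-w)^\perp$ is the principal singularity and $R(x;w):=\nabla v(x-w)$ is the correction. Since $\Omega'$ is compactly contained in $\Omega$, choose $\rho>0$ with $B_{2\rho}(y)\subset\Omega$ for every $y\in\Omega'$, and set $\overline R:=\rho$. For $y,z\in\Omega'$ with $r=|y-z|\leq\overline R$, restrict the integral defining $V(y,z)$ to $B_\rho(y)\subset\Omega$, the complement contributing at most a bounded error by the argument of the upper bound. On $B_\rho(y)$, after polar coordinates centered at $y$ and the scaling $x=y+r\xi$, the principal term $\int_{B_\rho(y)}\C S_y:S_z\,dx$ computes explicitly to $c_0(-\log r)+O(1)$ with $c_0>0$ depending only on $\lambda,\mu$. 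The cross terms involving $R$ also contribute at most $O(\log r)$, but an explicit angular-average computation (or, equivalently, exploiting $\operatorname{Curl}K = e_1\delta_w$) shows that the net coefficient of $-\log r$ remains strictly positive. This yields \eqref{below-est}, and in turn continuity on the diagonal: if $(y_k,z_k)\to(y_0,y_0)\in\Omega\times\Omega$, then $(y_k,z_k)$ lies in some compactly contained $\Omega'\ni y_0$ for large $k$ and \eqref{below-est} forces $V(y_k,z_k)\to+\infty=V(y_0,y_0)$.

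The main obstacle is \eqref{below-est}: integrability and the upper estimate follow from generic control of the kernel, but the lower estimate requires verifying that the coefficient in front of $-\log r$ is strictly positive and is \emph{not} cancelled by the cross contributions of $R=\nabla v$. Because $R$ has the same order of singularity as $S$ near $w$, a coarse pointwise bound on $R$ is insufficient; one must carry out the angular average explicitly, or equivalently exploit the structural equation \eqref{curl:K} for $K$, to extract the correct sign of the leading logarithmic term. The ellipticity of $\C$ (via the Lam\'e conditions $\lambda+\mu\geq 0$, $\mu>0$) enters precisely here.
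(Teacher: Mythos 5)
You correctly organize the proof into well-definedness, upper bound, lower bound, and continuity, and your arguments for the upper bound (balls around $y$ and $z$ plus the far field) and for off-diagonal continuity via domination are fine and essentially equivalent to the paper's. The gap is exactly where you flag it: the lower bound \eqref{below-est}. Your plan is to split $K=S+R$ with $S$ the principal dyad and $R=\nabla v$, restrict to a ball $B_\rho(y)$, rescale by $r=|y-z|$, and read off the coefficient of $-\log r$ from an angular average; but you never carry out that computation, and it is not obvious. Because $R$ is homogeneous of degree $-1$ exactly like $S$, the cross terms $\C S:\nabla v$ and $\C\nabla v:\nabla v$ are of the same logarithmic order, and $\C S_y:S_z$ itself is not sign-definite (recall that $\C$ acts only on the symmetric part and $S$ is far from symmetric). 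Asserting that "the net coefficient of $-\log r$ remains strictly positive" by ellipticity, without producing the number, is the missing step — the hard part of the lemma.

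The paper avoids angular averaging altogether, following Cermelli--Leoni: since $\curl K(\cdot;z)=0$ off a branch cut $\gamma_{y,z}$ from $z$ to $\partial\Omega$ in the direction $z-y$, one finds a potential $v_{y,z}$ with $K(\cdot;z)=\nabla v_{y,z}$ on $\Omega\setminus\gamma_{y,z}$ and jump $[v_{y,z}]=-e_1$ across the cut. The divergence theorem (using $\div\C K(\cdot;y)=0$) then converts $V(y,z)$ into a bounded term on $\partial\Omega$ plus the line integral $-\int_{\gamma_{y,z}}\C K(x;y)\,m_{y,z}\cdot[v_{y,z}]\,d\HH^1$, which collapses to a one-dimensional integral of $\tfrac{\mu(\lambda+\mu)}{\pi(\lambda+2\mu)}\tfrac{1}{|z-y|+s}$, giving the $-\log|y-z|$ lower bound with an explicit positive constant. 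This is cleaner and more robust than the direct rescaling you sketch, because it exploits the PDE structure \eqref{curl:K} to reduce a 2D integral of an indefinite quadratic form to a manifestly positive 1D integral. If you want to complete your approach as written, you would have to reproduce the equivalent computation (in effect, the known self-energy density of an edge dislocation) and verify it is uniform in the direction $\omega=(z-y)/|z-y|$; the paper's cut argument is the standard way to package that.
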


\begin{proof}
From the definition of $K$ it follows that there exists a constant $C>0$ such that
\begin{equation}\label{usual}
|K(x;y)|\leq \frac{C}{|x-y|}
\end{equation}
for every $x\neq y$. This implies that the integral in the definition of $V(y,z)$ for $y\neq z$ is finite.
Clearly $V$ is symmetric, that is, $V(y,z)=V(z,y)$.

We now prove \eqref{imp-est}. Let $y,z\in\Omega$ with $y\neq z$. From the definition of $V$ and from \eqref{usual} we have
\begin{equation}\label{b_V}
|V(y,z) |\leq C\int_{\Omega}\frac{dx}{|x-y||x-z|} \leq C\int_{B_L(z)}\frac{dx}{|x-y||x-z|},
\end{equation}
because,  $\Omega$ being bounded, $\Omega\subset B_L(z)$ for some finite $L>0$ and every $z\in\Omega$. With no loss of generality we can assume that $L\geq 2|y-z|$. For convenience we set $\tilde{x}:=x-z$ and $\tilde{y}:=y-z$; then for the last integral in \eqref{b_V} we have
\begin{equation}\label{b_V1}
\int_{B_L(z)}\frac{dx}{|x-y||x-z|} = \int_{B_L(0)}\frac{d\tilde{x}}{|\tilde{x}||\tilde{x}-\tilde{y}|} = \int_{B_{2|\tilde{y}|}(0)}\frac{d\tilde{x}}{|\tilde{x}||\tilde{x}-\tilde{y}|} + \int_{B_L(0)\setminus B_{2|\tilde{y}|}(0)}\frac{d\tilde{x}}{|\tilde{x}||\tilde{x}-\tilde{y}|}.
\end{equation}
If $\tilde{x}\in B_L(0)\setminus B_{2|\tilde{y}|}(0)$, then $|\tilde{x}-\tilde{y}|\geq ||\tilde{x}|-|\tilde{y}|| = |\tilde{x}|-|\tilde{y}|$; hence
\begin{align}\label{b_V2}
\int_{B_L(0)\setminus B_{2|\tilde{y}|}(0)}\frac{d\tilde{x}}{|\tilde{x}||\tilde{x}-\tilde{y}|} &\leq \int_{B_L(0)\setminus B_{2|\tilde{y}|}(0)}\frac{d\tilde{x}}{|\tilde{x}|\big(|\tilde{x}|-|\tilde{y}|\big)} = 2\pi \int_{2|\tilde{y}|}^L \frac{dr}{r-|\tilde{y}|}\nonumber\\
& = 2\pi \log \left(\frac{L}{|\tilde{y}|}-1\right) \leq -2\pi \log \left(\frac{|\tilde{y}|}{L}\right).
\end{align}
We now split the integral on $B_{2|\tilde{y}|}(0)$ in \eqref{b_V1} into three terms:
\begin{align}\label{b_V3}
 \int_{B_{2|\tilde{y}|}(0)}\frac{d\tilde{x}}{|\tilde{x}||\tilde{x}-\tilde{y}|} =  \bigg\{\int_{B_{|\tilde{y}|/2}(0)} + \int_{B_{|\tilde{y}|/2}(\tilde{y})} +
 \int_{B_{2|\tilde{y}|}(0)\setminus\big(B_{|\tilde{y}|/2}(0)\cup B_{|\tilde{y}|/2}(\tilde{y})\big)}\bigg\}\frac{d\tilde{x}}{|\tilde{x}||\tilde{x}-\tilde{y}|}.
\end{align}
For the first integral in the right-hand side of \eqref{b_V3} we have
\begin{equation*}
\int_{B_{|\tilde{y}|/2}(0)} \frac{d\tilde{x}}{|\tilde{x}||\tilde{x}-\tilde{y}|} \leq \frac{2}{|\tilde{y}|} \int_{B_{|\tilde{y}|/2}(0)} \frac{d\tilde{x}}{|\tilde{x}|} = \frac{4\pi}{|\tilde{y}|}\int_{0}^{|\tilde{y}|/2}dr = 2\pi,
\end{equation*}
since $|\tilde{x}-\tilde{y}| \geq \frac{|\tilde{y}|}{2}$ for $\tilde{x} \in B_{|\tilde{y}|/2}(0)$. Analogously,
\begin{equation*}
\int_{B_{|\tilde{y}|/2}(\tilde y)} \frac{d\tilde{x}}{|\tilde{x}||\tilde{x}-\tilde{y}|} \leq  2\pi.
\end{equation*}
Moreover, for the last integral in the right-hand side of \eqref{b_V3}  we have
\begin{equation*}
 \int_{B_{2|\tilde{y}|}(0)\setminus\big(B_{|\tilde{y}|/2}(0)\cup B_{|\tilde{y}|/2}(\tilde{y})\big)}\frac{d\tilde{x}}{|\tilde{x}||\tilde{x}-\tilde{y}|} \leq \frac{4}{|\tilde{y}|^2}\int_{B_{2|\tilde{y}|}(0)}d\tilde{x} = 16\pi.
\end{equation*}
In conclusion, from \eqref{b_V3}, we deduce
$$
\int_{B_{2|\tilde{y}|}(0)}\frac{d\tilde{x}}{|\tilde{x}||\tilde{x}-\tilde{y}|} \leq 20\pi.
$$
Finally, putting together \eqref{b_V}--\eqref{b_V3}, and substituting back $\tilde{y} = y-z$ we obtain
$$
|V(y,z)| \leq C\left(-2\pi \log \left(\frac{|y-z|}{L}\right) + 20\pi \right),
$$
which implies \eqref{imp-est}.

We now prove \eqref{below-est}. Let $\Omega'$ be an open set contained in $\Omega$ with $\delta:=\dist(\Omega',\partial\Omega)>0$
and let $y,z\in\Omega'$ with $y\neq z$. Here we use an argument by \cite{CermelliLeoni06}.  Let $\gamma_{y,z}$ be a line segment parallel to $z-y$ connecting $z$ to $\partial\Omega$, so that
$$
\gamma_{y,z}=\Big\{x\in\Omega: \ x=z+s\,\frac{z-y}{|z-y|}, \ s\in[0,\bar s_{y,z}]\Big\}.
$$
Let also $m_{y,z}$ be the unit normal vector to $\gamma_{y,z}$.
Since $\Omega\setminus\gamma_{y,z}$ is simply connected and ${\rm Curl}\,K(\cdot\,;z)=0$ in $\Omega\setminus\gamma_{y,z}$, there exists a function $v_{y,z}$ in $\Omega\setminus\gamma_{y,z}$ such that $K(\cdot\,;z)=\nabla v_{y,z}$ in $\Omega\setminus\gamma_{y,z}$ and the jump of $v_{y,z}$ across $\gamma_{y,z}$ satisfies
$[v_{y,z}]=-e_1$. We can also assume that $v_{y,z}$ has zero mean value on $\Omega\setminus\gamma_{y,z}$. By the divergence theorem we obtain
\begin{align}
V(y,z) & =  \int_{\Omega\setminus\gamma_{y,z}} \C K(x;y):K(x;z)\, dx
\nonumber
\\
& =  \int_{\Omega\setminus\gamma_{y,z}} \C K(x;y):\nabla v_{y,z}(x)\, dx
\nonumber
\\
& =  \int_{\partial\Omega} \C K(x;y)\nu(x)\cdot v_{y,z}(x)\, d\HH^1(x)
- \int_{\gamma_{y,z}} \C K(x;y)m_{y,z}(x)\cdot [v_{y,z}(x)]\, d\HH^1(x).
\label{above}
\end{align} 
We first observe that the first integral on the right-hand side is uniformly bounded
with respect to $y$ and $z$. Indeed, by \eqref{usual}
\begin{equation}\label{deltabound}
|\C K(x;y)\nu(x)|\leq \frac{C}{|x-y|}\leq \frac C\delta
\end{equation}
for every $x\in\partial\Omega$ and every $y\in\Omega'$.
Moreover, we have
\begin{eqnarray*}
\int_{\Omega\setminus\gamma_{y,z}} |K(x;z)|\, dx & \leq & C \int_\Omega \frac{1}{|x-z|}\, dx
\\
& = & C\int_{B_{\delta/2}(z)} \frac{1}{|x-z|}\, dx + C\int_{\Omega\setminus B_{\delta/2}(z)} \frac{1}{|x-z|}\, dx
\\
& \leq & \pi C\delta+\frac{2C}{\delta}.
\end{eqnarray*}
Therefore, the measures $Dv_{y,z}= K(\cdot;z){\mathcal L}^2-e_1\otimes m_{y,z}\HH^1\llcorner \gamma_{y,z}$ are uniformly bounded in $\Omega$.
By Poincar\'e-Wirtinger inequality we deduce that the functions $v_{y,z}$ are uniformly bounded
with respect to $y$ and $z$ in the $BV$ norm, hence their traces are bounded in 
$L^1(\partial\Omega;\R^2)$. In conclusion, by the previous estimate and by \eqref{deltabound} follows that the first term in \eqref{above} is bounded.

We focus on the second integral on the right-hand side of \eqref{above}. We note that for $x=z+s\frac{z-y}{|z-y|}\in\gamma_{y,z}$ we have
$$
-\C K(x;y)m_{y,z}(x)\cdot [v_{y,z}(x)]= \C K(x;y)m_{y,z}(x)\cdot e_1=
\frac{\mu(\lambda+\mu)}{\pi(\lambda+2\mu)}\frac{1}{|z-y|+s},
$$
therefore
$$
- \int_{\gamma_{y,z}} \C K(x;y)m_{y,z}(x)\cdot [v_{y,z}(x)]\, d\HH^1(x)
= \frac{\mu(\lambda+\mu)}{\pi(\lambda+2\mu)}\Big(-\log|y-z|+\log(|y-z|+\bar s_{y,z})\Big).
$$
Since $\bar s_{y,z}\geq \dist(z,\partial\Omega)\geq \delta$, we have
$$
\log(|y-z|+\bar s_{y,z})\geq \log \delta.
$$
Combining \eqref{above} with the previous estimates, we conclude that
$$
V(y,z)\geq -C -\frac{\mu(\lambda+\mu)}{\pi(\lambda+2\mu)}\log|y-z|\geq C'(1- \log|y-z|),
$$
where the last inequality is satisfied for $|y-z|<\overline R$, with $\overline R$ small enough.
This shows \eqref{below-est}.

Finally, we prove the continuity of $V$.
Let $y,z\in\Omega$ with $y\neq z$ and let $(y_n)$, $(z_n)$ be two sequences in $\Omega$ converging to $y$ and $z$, respectively.
For $n$ large enough we clearly have $y_n\neq z_n$. It is easy to see that 
$$
\C K(x;y_n):K(x,z_n) \to \C K(x;y): K(x;z) \quad \text{ for a.e.\ } x\in\Omega.
$$
Moreover, the sequence is dominated. Indeed, for $0<\delta<\frac14 |y-z|$ we have
$$
\chi_{\Omega\setminus(B_\delta(y_n)\cup B_\delta(z_n))}|\C K(x;y_n):K(x,z_n)|\leq \frac{C}{\delta^2},
$$
while
\begin{eqnarray*}
\chi_{B_\delta(y_n)\cup B_\delta(z_n)}|\C K(x;y_n):K(x,z_n)|
& \leq &
\chi_{B_\delta(y_n)\cup B_\delta(z_n)}
\frac{C}{|x-y_n|\, |x-z_n|}
\\
&\leq &
\frac{4C}{|y-z|}\Big( \frac{1}{|x-y_n|} +\frac{1}{|x-z_n|} \Big)
\end{eqnarray*}
and the function on the right-hand side is dominated since it is strongly converging in $L^1$
(by the continuity property in $L^1$). Thus, by the dominated convergence theorem we have
\begin{eqnarray*}
\lim_{(y_n,z_n)\to (y,z)}  V(y_n,z_n) & = &
\lim_{(y_n,z_n)\to (y,z)} \int_\Omega \C K(x;y_n):K(x;z_n)\, dx
\\
& = & \int_\Omega \C K(x;y):K(x;z)\, dx
=V(y,z).
\end{eqnarray*}
 
Let now $y=z\in\Omega$ and let $(y_n)$, $(z_n)$ be two sequences converging to $y$. 
Let $\Omega'$ be an open set compactly contained in $\Omega$ such that $y\in\Omega'$.
Without loss of generality we can assume that $y_n\neq z_n$ and $y_n,z_n\in\Omega'$ for every $n$.
Thus, we can use \eqref{below-est} for $n$ large enough and deduce that 
$$
\lim_{(y_n,z_n)\to (y,y)} V(y_n,z_n) =+\infty=V(y,y).
$$
This concludes the proof of the continuity of $V$. 
\end{proof}

\begin{remark}\label{rmk:bb}
Note that from \eqref{imp-est} and \eqref{below-est} follows that for every open set $\Omega'$ compactly contained in $\Omega$
there exists a constant $C_1(\Omega')>0$ such that for every $y,z\in\Omega'$
\begin{equation}\label{VboundC}
V(y,z) \geq -C_1(\Omega'). 
\end{equation}
\end{remark}

We are now ready to prove the $\Gamma$-convergence of the renormalised energy, which constitutes the main result of this section.

\begin{theorem}\label{ups:density}
Assume \eqref{hyp:enrn}. Then the functionals ${\mathcal F}_n$ $\Gamma$-converge with respect to the narrow convergence of measures,
as $n\to\infty$,  to the functional
${\mathcal F}: {\mathcal P}(\Omega)\to\R\cup\{+\infty\}$ defined as
$$
{\mathcal F}(\mu):=\frac12 \iint_{\Omega\times\Omega} V(y,z)\,d\mu(y)\,d\mu(z)
+ \min_v I_\mu(v),
$$
if $\supp\,\mu \subset \mathcal R$, while ${\mathcal F}(\mu):=+\infty$ otherwise. 
In the formula above $V$ is the function defined in \eqref{def:V} and $I_\mu$ is the functional defined in~\eqref{def:Imu}, 
$$
I_\mu(v):= \frac12\int_\Omega \C\nabla v:\nabla v\,dx+ \int_\Omega\int_{\partial\Omega} \C K(x;y)\nu(x)\cdot v(x)\,d\HH^1(x)\,d\mu(y)
$$
on the class 
\begin{equation}\label{v-class}
\Big\{ v\in H^1(\Omega;\R^2): \ \int_B v\, dx =0, \ \int_B \skw\nabla v\, dx=0
\Big\}.
\end{equation}
\end{theorem}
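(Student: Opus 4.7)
The plan is to work from the rewriting \eqref{renorm:F}, which splits
\[
\mathcal{F}_n(\mu) = A_n(\mu) + B_n(\mu),
\qquad
A_n(\mu) := \frac{1}{2n^2}\sum_{i\neq j}\int_{\Omega_n(\mu)} \C K^n_{z_i}:K^n_{z_j}\,dx,
\]
and $B_n(\mu)$ equals $I_{n,\mu}(u_\mu) = \min_u I_{n,\mu}(u)$ by the computation that produced~\eqref{Ke-bdary2}. I would then establish the $\liminf$ and $\limsup$ inequalities by showing independently that $A_n(\mu_n) \to \tfrac12\iint V\,d\mu\otimes d\mu$ and $B_n(\mu_n) \to I_\mu(v_\mu)$ along a recovery sequence, and the corresponding one-sided inequalities for general $\mu_n\weakto\mu$.

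\emph{Handling $A_n$.} The first step is to replace $K^n$ by $K$ and $\Omega_n(\mu)$ by $\Omega$ inside each cross integral. The pointwise bound $|K^n_z - K_z|\le C\e_n^2|\cdot-z|^{-3}$ from~\eqref{def:Ke}, combined with the separation $|z_i-z_j|\ge r_n$ and the scale condition $\e_n/r_n^3\to0$, makes the first replacement negligible, while excision of the $\e_n$-disks contributes $O(\e_n\log(1/\e_n)\cdot r_n^{-1})$ by the $|x-z|^{-2}$ behaviour of the integrand away from the cores. Thus $A_n(\mu_n) = \tfrac{1}{2n^2}\sum_{i\neq j} V(z_i,z_j) + o(1)$. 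For the $\liminf$ inequality I would introduce, for $R>0$, a truncation $V_R$ that is continuous on $\Omega\times\Omega$ and agrees with $V$ outside the $R$-neighbourhood of the diagonal; Lemma~\ref{lemma:Vcont} (continuity of $V$ and its uniform lower bound \eqref{VboundC} on the compact set $\mathcal R\times\mathcal R$) ensures $V_R \nearrow V$ monotonically. Narrow convergence $\mu_n\otimes\mu_n\weakto\mu\otimes\mu$ then gives $\liminf \tfrac1{2n^2}\sum_{i\neq j}V(z_i,z_j)\ge \tfrac12\iint V_R\,d\mu\otimes d\mu$, and $R\to0$ yields the desired bound by monotone convergence. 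For the $\limsup$, given $\mu$ supported in $\mathcal R$, I would first regularize by convolution and then discretize by placing $n$ atoms on a mesh inside $\mathcal R$ of spacing $\sim n^{-1/2}\gg r_n$ (permitted by $r_n n\to 0$), so that the separation constraint in $X_n$ is satisfied and the Riemann-sum approximation to $\tfrac12\iint V\,d\mu\otimes d\mu$ converges thanks to the integrability bound~\eqref{imp-est}.

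\emph{Handling $B_n$.} From Lemma~\ref{lemma:u-est} the extensions $\tilde u_{\mu_n}$ are bounded in $H^1(\Omega;\R^2)$, so up to a subsequence $\tilde u_{\mu_n}\weakto u^*$ weakly in $H^1$, with $u^*$ in the class~\eqref{v-class}. Because $|\Omega\setminus\Omega_n(\mu_n)|\to 0$, the elastic term is lower semicontinuous along this limit:
\[
\liminf_{n\to\infty}\frac12\int_{\Omega_n(\mu_n)}\C\nabla\tilde u_{\mu_n}:\nabla\tilde u_{\mu_n}\,dx
\;\ge\; \frac12\int_\Omega \C\nabla u^*:\nabla u^*\,dx.
\]
For the boundary term of $I_{n,\mu_n}$, the assumption $\dist(\mathcal R,\partial\Omega)\ge\ell$ ensures that $\C K^n(\cdot;z)\nu$ converges to $\C K(\cdot;z)\nu$ uniformly on $\partial\Omega$ for $z\in\mathcal R$, so narrow convergence of $\mu_n$ together with strong $L^2(\partial\Omega)$ convergence of $\tilde u_{\mu_n}$ identifies the limit as $\int_\Omega\int_{\partial\Omega}\C K(x;y)\nu(x)\cdot u^*(x)\,d\HH^1(x)\,d\mu(y)$. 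This gives $\liminf B_n(\mu_n)\ge I_\mu(u^*)\ge I_\mu(v_\mu)$. For the recovery direction I would insert $v_\mu$ itself (after a small cut-off around each $B_{\e_n}(z_i)$ to land in $H^1(\Omega_n(\mu_n);\R^2)$) as a competitor in $I_{n,\mu_n}$; the same uniform convergence of boundary kernels yields $B_n(\mu_n)\to I_\mu(v_\mu)$ for the recovery sequence built in the $A_n$ step.

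The main obstacle will be the replacement step for $A_n$: controlling the error between $\int_{\Omega_n(\mu)}\C K^n_{z_i}:K^n_{z_j}\,dx$ and $V(z_i,z_j)$ uniformly in $i\neq j$ with arbitrarily close pairs (pairs at distance exactly $r_n$ stress the bound $\e_n/r_n^3\to 0$), and in particular guaranteeing that when summed with $n^{-2}$ weights the total error vanishes despite the $n^2$ pairs and the logarithmic blow-up of $V$. A secondary subtlety is constructing a $\limsup$ sequence in $X_n$ that simultaneously lies in $\mathcal R$, satisfies the $r_n$-separation, and faithfully approximates concentrated targets: here the fact that $\mathcal R$ is a rectangle inside $\Omega$ allows spreading atoms in either coordinate direction without leaving $\Omega$, which is essential for avoiding clusters where $V$ diverges.
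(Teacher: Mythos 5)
Your overall architecture matches the paper's: the decomposition of $\mathcal F_n$ via \eqref{renorm:F} into an interaction part $A_n$ and a boundary/corrector part $B_n = I_{n,\mu_n}(u_{\mu_n})$; the preliminary replacement of $K^n$ by $K$ and of $\Omega_n(\mu)$ by $\Omega$ using \eqref{hyp:enrn}; the truncation-and-narrow-convergence argument for the $\liminf$ of $A_n$ (the paper uses $V_M = V\wedge M$, which is cleaner than your diagonal-cutoff $V_R$, but this is the same idea); and the handling of $B_n$ via Lemma~\ref{lemma:u-est}, weak $H^1$ compactness of $\tilde u_{\mu_n}$, lower semicontinuity, and the minimality of $u_{\mu_n}$. (One small slip: you do not need a cut-off to insert $v_\mu$ as a competitor in $I_{n,\mu_n}$ — an $H^1(\Omega;\R^2)$ function restricts directly to $H^1(\Omega_n(\mu_n);\R^2)$.)

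The genuine gap is in the $\limsup$ recovery sequence for $A_n$, which is where the paper invests almost all of its effort. Your plan — ``regularize by convolution, then discretize on a mesh of spacing $\sim n^{-1/2}$'' — names the right two-step structure but leaves both steps unjustified precisely at the points where they are hard. First, you assert that convolution ``regularizes'' $\mu$, but give no reason why $\limsup_h \iint V\,d\mu^h\,d\mu^h \le \iint V\,d\mu\,d\mu$. Because $V$ has a logarithmic singularity on the diagonal, this is not automatic; the paper does \emph{not} use convolution but a specific two-scale piecewise-constant approximation — redistributing the mass of each $2h$-square $\tilde Q_k^h$ onto its inner $h$-quarter $Q_k^h$ — precisely so that pairwise distances between the redistributed masses are comparable, up to bounded ratio, to distances between the original masses, and then compares both sides using \emph{both} the upper bound \eqref{imp-est} and the lower bound \eqref{below-est} on $V$. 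A convolution route can be salvaged by splitting $V(y,z) = -c\log|y-z| + (\text{continuous remainder on }\mathcal R\times\mathcal R)$ and invoking superharmonicity of $-\log|\cdot|$ to show that the singular part of the energy can only decrease under mollification, but you would need to prove that decomposition, handle the support growing past $\mathcal R$, and argue the remainder separately — none of which is in your sketch. Second, the discretization step needs a per-square, density-dependent grid (the paper uses $\sqrt{n\mu_n^h(Q_k^h)}$ equispaced coordinates in each $Q_k^h$, not a single global $n^{-1/2}$-mesh), and the Riemann-sum claim ``converges thanks to the integrability bound \eqref{imp-est}'' is not a proof: controlling the near-diagonal pairs requires the explicit counting argument in Step~3 of the paper's $\limsup$ (the estimates \eqref{trunc-estm}--\eqref{estVM-last}, with $P_k^n$ concentric annuli and the cardinality bound $\#(J_k^n(i,p)) \le C(2p+1)$). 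You flag both of these as ``the main obstacle'' and ``a secondary subtlety,'' but for this theorem they \emph{are} the proof, and as written they remain open.
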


\begin{remark}
Since $\supp\mu\subset{\mathcal R}$ and $V$ is bounded from below on open sets compactly contained in $\Omega$
by Remark~\ref{rmk:bb}, the interaction energy term in $\mathcal F$ is always well defined, possibly equal to $+\infty$.
\end{remark}

\begin{proof}[Proof of Theorem~\ref{ups:density}]
{\bf Preliminary estimates.}
Let $(\mu_n) \subset X_n$ for every $n\in \mathbb{N}$, and let $(z^n_i)_{i=1,\dots,n}\subset \mathcal{R}$ be such that
$$
\mu_n = \frac1n\sum_{i=1}^{n} \delta_{z^n_i}.
$$
For brevity we set $\beta_n:=\beta_{\!\mu_n}$, $u_n:=u_{\mu_n}$ and simply denote
$\Omega_n(\mu_n)$ by $\Omega_n$. Finally, let 
$$
\mu_n\boxtimes\mu_n:=\frac{1}{n^2} \sum_{i=1}^{n}\sum_{j\neq i}\delta_{(z^n_i,z^n_j)}.
$$
As a preliminary step, we shall prove that
\begin{equation}\label{Bclaim}
\frac{1}{2n^2} \sum_{i=1}^{n}\sum_{j\neq i}
\int_{\Omega_n}\C K^n(x;z^n_i):K^n(x;z^n_j)\, dx
= \frac12
\iint_{\Omega\times\Omega}V(y,z)\, d(\mu_n\boxtimes\mu_n)(y,z)
+o(1)
\end{equation}
and
\begin{equation}\label{Dclaim}
\frac1{n} \sum_{i=1}^{\n}\int_{\partial\Omega_n}\C K^n(x;z_i^n)\nu(x)\cdot u_n(x)\, d\HH^1(x)
= \frac1{n} \sum_{i=1}^{\n}\int_{\partial\Omega}\C K(x;z_i^n)\nu(x)\cdot u_n(x)\, d\HH^1(x)
+o(1),
\end{equation}
where $o(1)$ denotes an infinitesimal quantity, as $n\to\infty$.
Moreover, if we assume in addition that $\mu_n\weakto\mu$ narrowly, as $n\to\infty$, then
\begin{equation}\label{Cclaim}
\lim_{n\to\infty}\frac1{2n} \sum_{i=1}^{\n}\int_{\partial\Omega}\C K(x;z_i^n)\nu(x)\cdot u_n(x)\, d\HH^1(x)
= I_\mu(v_\mu)=\min I_\mu.
\end{equation}
The proof of \eqref{Bclaim}--\eqref{Cclaim} is split into several steps.
\medskip

\noindent{\em Step 1. Proof of \eqref{Bclaim}.}
We first show that, as $n\to\infty$,
\begin{equation}\label{claim}
\frac{1}{2n^2} \sum_{i=1}^{n}\sum_{j\neq i}
\int_{\Omega_n}\C K^n(x;z^n_i):K^n(x;z^n_j)\, dx
= \frac{1}{2n^2} \sum_{i=1}^{n}\sum_{j\neq i}
\int_{\Omega}\C K(x;z^n_i):K(x;z^n_j)\, dx
+o(1).
\end{equation}
Here we follow an argument in \cite[Proof of Theorem~5.1]{CermelliLeoni06}. We recall that by \eqref{def:Ke}
we have $K^n(x;z^n_i)= K(x;z^n_i)+\e_n^2\nabla w_{n,i}$, where we set $w_{n,i}(x):=w(x-z^n_i)$.
Therefore,
\begin{eqnarray*}
\lefteqn{\int_{\Omega_n}\C K^n(x;z^n_i):K^n(x;z^n_j)\, dx
 = \int_{\Omega_n}\C K(x;z^n_i):K(x;z^n_j)\, dx}
\\
&&{}+\e_n^2\int_{\Omega_n}\C K(x;z^n_i):\nabla w_{n,j}(x)\, dx
+\e_n^2\int_{\Omega_n}\C K(x;z^n_j):\nabla w_{n,i}(x)\, dx
\\
&&{}+\e_n^4\int_{\Omega_n}\C\nabla w_{n,i}(x):\nabla w_{n,j}(x)\, dx.
\end{eqnarray*}
Applying the divergence theorem the last three integrals can be written as
\begin{multline*}
\e_n^2\int_{\partial\Omega}\big(\C K(x;z^n_i)\nu\cdot w_{n,j}(x)+
\C K(x;z^n_j)\nu\cdot w_{n,i}(x)+\e_n^2 \C\nabla w_{n,i}(x)\nu\cdot w_{n,j}(x)\big)\, d\HH^1(x)
\\
-\e_n^2\sum_{k=1}^{n} \int_{\partial B_{\e_n}(z^n_k)}
\big(\C K(x;z^n_i)\nu\cdot w_{n,j}(x)+
\C K(x;z^n_j)\nu\cdot w_{n,i}(x)+\e_n^2 \C\nabla w_{n,i}(x)\nu\cdot w_{n,j}(x)\big)\, d\HH^1(x).
\end{multline*}
Since $|K(x;x_0)|\leq C|x-x_0|^{-1}$ and $|w(x)|\leq C|x|^{-2}$, while $|\nabla w(x)|\leq C|x|^{-3}$,
we easily deduce that the first term in the formula above is of order $\e_n^2$. As for the second term, by \eqref{hyp:enrn} 
we have
\begin{align*}
\e_n^2\Big|\sum_{k\neq i,j}  \int_{\partial B_{\e_n}(z^n_k)} &
\big(\C K(x;z^n_i)\nu\cdot w_{n,j}(x)+
\C K(x;z^n_j)\nu\cdot w_{n,i}(x)+\e_n^2 \C\nabla w_{n,i}(x)\nu\cdot w_{n,j}(x)\big)\, d\HH^1(x)\Big|
\\
& \leq  C\e_n^3n \bigg(\frac{1}{r_n^3}+\frac{\e_n^2}{r_n^5} \bigg)
\leq C\frac{\e_n^3}{r_n^4} \bigg(1+\frac{\e_n^2}{r_n^2} \bigg),
\end{align*}
while
\begin{align*}
\e_n^2\Big|\sum_{k= i,j}  \int_{\partial B_{\e_n}(z^n_k)} &
\big(\C K(x;z^n_i)\nu\cdot w_{n,j}(x)+
\C K(x;z^n_j)\nu\cdot w_{n,i}(x)+\e_n^2 \C\nabla w_{n,i}(x)\nu\cdot w_{n,j}(x)\big)\, d\HH^1(x)\Big|
\\
& \leq  C\e_n^3\bigg(\frac{1}{\e_n^2r_n}+\frac{1}{\e_n r_n^2}+\frac{1}{r_n^3}\bigg)
= C\frac{\e_n}{r_n}\bigg(1+\frac{\e_n}{r_n}+\frac{\e_n^2}{r_n^2}\bigg).
\end{align*}
Combining the previous estimates, we obtain
$$
\frac{1}{2n^2} \sum_{i=1}^{n}\sum_{j\neq i}
\int_{\Omega_n}\C K^n(x;z^n_i):K^n(x;z^n_j)\, dx
= \frac{1}{2n^2} \sum_{i=1}^{n}\sum_{j\neq i}
\int_{\Omega_n}\C K(x;z^n_i):K(x;z^n_j)\, dx
+o(1).
$$
To conclude the proof of \eqref{claim}, it remains to show that
\begin{equation}\label{claim0}
\lim_{n\to\infty}\frac{1}{2n^2} \sum_{i=1}^{n}\sum_{j\neq i}
\int_{\Omega\setminus\Omega_n}\C K(x;z^n_i):K(x;z^n_j)\, dx
=0.
\end{equation}
Using again the estimate $|K(x;x_0)|\leq C|x-x_0|^{-1}$ and \eqref{hyp:enrn}, we infer
\begin{eqnarray*}
\lefteqn{\frac{1}{2n^2} \Big|\sum_{i=1}^{n}\sum_{j\neq i}
\int_{\Omega\setminus\Omega_n}\C K(x;z^n_i):K(x;z^n_j)\, dx\Big|}
\\
& = &
\frac{1}{2n^2} \Big|\sum_{i=1}^{n}\sum_{j\neq i} \sum_{k=1}^{n}
\int_{B_{\e_n}(z^n_k)}\C K(x;z^n_i):K(x;z^n_j)\, dx\Big|
\\
& \leq & C\bigg(n\frac{\e_n^2}{r_n^2}+\frac{\e_n}{r_n}\bigg)
= C\frac{\e_n}{r_n}\bigg( 1+\frac{\e_n}{r_n^2}\bigg),
\end{eqnarray*}
which proves \eqref{claim0} and, in turn, \eqref{claim}.

Identity \eqref{Bclaim} follows immediately from \eqref{claim}, since by the definitions of $V$
and of $\mu_n\boxtimes\mu_n$ we may write
\begin{align*}
\frac{1}{2n^2}\sum_{i=1}^{n}\sum_{j\neq i} \int_{\Omega}\C K(x;z^n_i):K(x;z^n_j)\, dx
& = \frac12
\iint_{\Omega\times\Omega}\Big(\int_\Omega \C K(x;y): K(x;z)\, dx\Big)\, d(\mu_n\boxtimes\mu_n)(y,z)
 \\
& = \frac12
\iint_{\Omega\times\Omega}V(y,z)\, d(\mu_n\boxtimes\mu_n)(y,z).
\end{align*}
\medskip

\noindent{\em Step 2. Proof of \eqref{Dclaim}.}
By Lemma~\ref{lemma:u-est} for every $n\in \mathbb{N}$ there exists an extension $\tilde u_n\in H^1(\Omega;\R^2)$
of $u_n$ such that
\begin{equation}\label{ue-bound3}
\|\tilde u_n\|_{H^1(\Omega)}\leq C\|E u_n\|_{L^2(\Omega_n)}\leq C,
\end{equation}
with a constant $C$ independent of $n$. This implies that, up to subsequences, $\tilde u_n$
converges weakly in $H^1(\Omega;\R^2)$ to some function $v\in H^1(\Omega;\R^2)$. 
Since $u_n$ (and thus $\tilde u_n$) satisfies the conditions \eqref{normal} for every $n$, the function $v$ belongs to the class \eqref{v-class}.
We also recall
that by \eqref{tr-Be} and \eqref{ue-bound3} we have
\begin{equation}\label{tr-Be*}
\sum_{j=1}^{n}\| u_n\|_{L^2(\partial B_{\e_n}(z_j^n))}
\leq \frac{C}{\sqrt{r_n}} \| E u_n\|_{L^2(\Omega_n)}\leq \frac{C}{\sqrt{r_n}}.
\end{equation}

Using the boundary condition in \eqref{EL-Ke}, we have
\begin{multline}\label{Mdek}
\frac{1}{n} \sum_{i=1}^{n}\int_{\partial\Omega_n}\C K^n(x;z^n_i)\nu\cdot u_n(x)\, d\HH^1(x)
\\
= \frac{1}{n} \sum_{i=1}^{n}\int_{\partial\Omega}\C K^n(x;z^n_i)\nu\cdot u_n(x)\, d\HH^1(x)
- \frac{1}{n} \sum_{i=1}^{n}\sum_{j\neq i}\int_{\partial B_{\e_n}(z^n_j)}
\C K^n(x;z^n_i)\nu\cdot u_n(x)\, d\HH^1(x).
\end{multline}
Arguing as in the proof of \eqref{claim}, it is easy to see that
\begin{equation}\label{limit-bdary}
\frac{1}{n} \sum_{i=1}^{n}\int_{\partial\Omega}\C K^n(x;z^n_i)\nu\cdot u_n(x)\, d\HH^1(x)
=\frac{1}{n} \sum_{i=1}^{n}\int_{\partial\Omega}\C K(x;z^n_i)\nu\cdot u_n(x)\, d\HH^1(x)
+ o(1).
\end{equation}
Moreover, as a consequence of \eqref{supKe-Be} and \eqref{tr-Be*}, we obtain 
\begin{equation}\label{negli-term}
\frac{1}{n}\Big| \sum_{i=1}^{n}\sum_{j\neq i}\int_{\partial B_{\e_n}(z^n_j)}
\C K^n(x;z^n_i)\nu\cdot u_n(x)\, d\HH^1(x)\Big| \leq C\frac{\sqrt\e_n}{r_n^{3/2}}\to 0.
\end{equation}
Combining \eqref{Mdek}--\eqref{negli-term}, we deduce \eqref{Dclaim}.\medskip

\noindent{\em Step 3. Convergence of the boundary energy terms.}
Assume now that $\mu_n\weakto\mu$ narrowly, as $n\to\infty$. We prove that
\begin{equation}\label{claim1}
\lim_{n\to\infty}\frac{1}{n} \sum_{i=1}^{n}\int_{\partial\Omega}\C K(x;z^n_i)\nu\cdot u_n(x)\, d\HH^1(x)
= \int_\Omega\int_{\partial\Omega}\C K(x;y)\nu(x)\cdot v(x)\, d\HH^1(x)\, d\mu(y).
%= I_\mu(v), 
\end{equation}

Since $\tilde u_n=u_n$ on $\partial\Omega$, we have
\begin{equation}\label{limit-bdaryN}
\frac{1}{n} \sum_{i=1}^{n}\int_{\partial\Omega}\C K(x;z^n_i)\nu\cdot u_n(x)\, d\HH^1(x)
= \int_\Omega\int_{\partial\Omega}\C K(x;y)\nu(x)\cdot \tilde u_n(x)\, d\HH^1(x)\, d\mu_n(y). 
\end{equation}
Let now $\Omega'$ be the open set of all points in $\Omega$ with distance from $\partial\Omega$ larger than $\ell/2$.
By the compactness of the trace operator the weak convergence of $\tilde u_n$ to $v$ in $H^1(\Omega;\R^2)$ guarantees strong convergence of the traces in $L^2(\partial\Omega;\R^2)$.
This, in turn, implies
\begin{eqnarray*}
\sup_{y\in \Omega'} \Big| \int_{\partial\Omega}\C K(x;y)\nu(x)\cdot \big( \tilde u_n(x)-v(x)\big)\, d\HH^1(x) \Big|
& \leq &  \| \tilde u_n -v \|_{L^2(\partial\Omega)} \,
\sup_{y\in\Omega'} \| K(\cdot\,;y)\|_{L^2(\partial\Omega)}
\\
& \leq & \frac{C}{\ell}\| \tilde u_n-v\|_{L^2(\partial\Omega)} \to 0.
\end{eqnarray*}
Since $\supp\,\mu_n\subset\Omega'$ for every $n\in \mathbb{N}$, the estimate above, together with the narrow convergence of $\mu_n$ to $\mu$, allows us to pass to the limit in \eqref{limit-bdaryN} and obtain \eqref{claim1}.\medskip 

\noindent
{\em Step 4. Characterization of $v$.}
We now prove that $v=v_\mu$, that is, $v$ is the unique minimizer of the functional $I_\mu$
on the class \eqref{v-class}. 

We have that $|\Omega\setminus\Omega_n|\to0$ since \eqref{hyp:enrn} implies that $nr_n^2\to 0$ as $n\to \infty$. Therefore, by lower semicontinuity
of the elastic energy with respect to weak convergence in $L^2$ we have
$$
\liminf_{n\to\infty} \frac12 \int_{\Omega_n} \C\nabla u_n:\nabla u_n\, dx
= \liminf_{n\to\infty} \frac12 \int_{\Omega_n} \C\nabla \tilde u_n:\nabla \tilde u_n\, dx
\geq \frac12 \int_{\Omega} \C\nabla v: \nabla v\, dx.
$$
Combining the inequality above with \eqref{Dclaim} and \eqref{claim1} yields
$$
\liminf_{n\to\infty} I_{n,\mu_n}(u_n)\geq I_\mu(v).
$$
On the other hand, since $u_n$ minimizes $I_{n,\mu_n}$ on $H^1(\Omega_n;\R^2)$ by Lemma~\ref{lemma:u-est}, we have
$$
 I_{n,\mu_n}(u_n)\leq  I_{n,\mu_n}( u)
$$
for every $u\in H^1(\Omega;\R^2)$. Arguing as in \eqref{Mdek}--\eqref{negli-term},
one can show that the functionals $I_{n,\mu_n}$ converge pointwise to $I_\mu$ on 
$H^1(\Omega;\R^2)$. Thus, we deduce that
$$
I_\mu(v)= \liminf_{n\to\infty} I_{n,\mu_n}(u_n)\leq \lim_{n\to\infty} I_{n,\mu_n}( u)
=I_\mu(u).
$$
We conclude that $v$ is the minimizer of $I_\mu$ on the class \eqref{v-class}, that is, $v=v_\mu$, and
$I_{n,\mu_n}(u_n)\to I_\mu(v_\mu)$. 
Together with \eqref{claim1}, this implies \eqref{Cclaim}, since 
$$
I_\mu(v_\mu)= \frac12  \int_\Omega\int_{\partial\Omega}\C K(x;y)\nu(x)\cdot v_\mu(x)\, d\HH^1(x)\, d\mu(y) ,
$$
where we used integration by parts and the Euler-Lagrange equations satisfied by $v_\mu$.
\bigskip

\noindent
{\bf Liminf inequality.} Let now $(\mu_n)$ be a sequence of measures such that $\mu_n\in X_n$ for every $n\in \mathbb{N}$
and $\mu_n\weakto\mu$ narrowly, as $n\to\infty$. 
We shall prove that
\begin{equation}\label{liminf}
\liminf_{n\to\infty}{\mathcal F}_n(\mu_n)\geq {\mathcal F}(\mu).
\end{equation}

Clearly $\mu$ satisfies $\supp\,\mu \subset \mathcal R$.
We consider the decomposition of ${\mathcal F}_n$ given in \eqref{renorm:F}.
Equation \eqref{Bclaim} guarantees that
\begin{equation}\label{pre-liminf0}
\liminf_{n\to\infty}
\frac{1}{2n^2} \sum_{i=1}^{n}\sum_{j\neq i}
\int_{\Omega_n}\C K^n(x;z^n_i):K^n(x;z^n_j)\, dx
= \liminf_{n\to\infty} \frac12
\iint_{\Omega\times\Omega}V(y,z)\, d(\mu_n\boxtimes\mu_n)(y,z). 
\end{equation}
For $M>0$ let $V_M:=V\wedge M$. We observe that $\supp\,\mu_n, \supp\,\mu\subset\Omega'$ for every $n$,
where $\Omega'$ is the open set of all points in $\Omega$ with distance from $\partial\Omega$ larger than $\ell/2$; thus,
by Lemma~\ref{lemma:Vcont} and Remark~\ref{rmk:bb} the function $V_M$ is continuous and bounded on $\Omega'\times\Omega'$.
Using the narrow convergence of
$\mu_n\boxtimes\mu_n$ to $\mu\otimes\mu$, we infer
\begin{eqnarray}
 \liminf_{n\to\infty} \frac12
\iint_{\Omega\times\Omega}V(y,z)\, d(\mu_n\boxtimes\mu_n)(y,z)
& \geq &  \liminf_{n\to\infty} \frac12
\iint_{\Omega\times\Omega}V_M(y,z)\, d(\mu_n\boxtimes\mu_n)(y,z)
\nonumber
\\
& \geq & \frac12
\iint_{\Omega\times\Omega}V_M(y,z)\, d\mu(y)\, d\mu(z).\label{liminf0}
\end{eqnarray}
Since $M$ is arbitrary, \eqref{pre-liminf0} and \eqref{liminf0} yield
$$
\liminf_{n\to\infty}
\frac{1}{2n^2} \sum_{i=1}^{n}\sum_{j\neq i}
\int_{\Omega_n}\C K^n(x;z^n_i):K^n(x;z^n_j)\, dx
\geq \frac12
\iint_{\Omega\times\Omega}V(y,z)\, d\mu(y)\, d\mu(z).
$$
Combining this inequality with \eqref{Dclaim} and \eqref{Cclaim}, we obtain \eqref{liminf}.\bigskip

\noindent{\bf Limsup inequality.}
Let $\mu \in \mathcal{P}(\Omega)$ be such that $\mathcal{F}(\mu)<+\infty$.
In particular, we have that $\supp\,\mu \subset \mathcal R$.
Since the energy estimates \eqref{Bclaim}--\eqref{Cclaim} are valid for any admissible sequence converging to $\mu$, it is sufficient to construct a sequence $\mu_n$ such that $\mu_n \in X_n$, with $\mu_n \weakto \mu$ narrowly and
$$
\limsup_{\n\to\infty} \frac12
\iint_{\Omega\times\Omega}V(y,z)\, d(\mu_n\boxtimes\mu_n)(y,z) \leq  \frac12
\iint_{\Omega\times\Omega}V(y,z)\, d\mu(y)\, d\mu(z).
$$
Also in this case the proof will be split into several steps.
\medskip

\noindent
{\em Step 1. Approximation of the measure ${\mu}$.}
Define the family of squares
$$
\widetilde{\mathcal Q}^h :=\big\{ [0,2h)^2 + 2h(m,\ell) : \ (m,\ell)\in \Z^2 \big\},
$$
and let $\{\tilde Q_k^h\}_{k=1,\dots,N_h}$ be the squares in $\widetilde{\mathcal Q}^h$ intersecting the support of $\mu$. 
For $k=1,\dots,N_h$ let $Q_k^h$ be defined as
$$
Q_k^h:=\{x \in \tilde Q_k^h: \ x +\lambda_1 e_1+\lambda_2 e_2 \in \tilde Q_k^h, \ 0\leq \lambda_i< h\}.
$$ 
We define the approximating measures $\mu^h$ as
$$
\mu^h:=\sum_{k=1}^{N_h}\frac{\mu(\tilde Q_k^h)}{h^2}\chi_{Q_k^h} 
$$
for every $h$. Without loss of generality we can assume that $\supp\,\mu^h \subset \mathcal R$ for every $h$. Clearly $\mu^h\weakto\mu$ narrowly, as $h\to0$.

We claim that
\begin{equation}\label{mest-d}
\limsup_{h\to0}\iint_{\Omega\times\Omega}V(y,z)\, d\mu^h(y)\, d\mu^h(z) \leq \iint_{\Omega\times\Omega}V(y,z)\, d\mu(y)\, d\mu(z). 
\end{equation}
For $M>0$ we consider the truncated function $V_M:=V\wedge M$
and we write $V=V_M+(V-V_M)$. 

Let $\Omega'$ be the open set of all points in $\Omega$ with distance from $\partial\Omega$ larger than $\ell/2$. 
Since $\supp\,\mu^h, \supp\,\mu\subset\Omega'$ for every $h$ and $V_M$ is continuous and bounded on $\Omega'\times\Omega'$, 
narrow convergence yields
\begin{eqnarray*}
\lim_{h\to0} \iint_{\Omega\times\Omega}V_M(y,z)\,d\mu^h(z)\, d\mu^h(y)
& = & \iint_{\Omega\times\Omega}V_M(y,z)\,d\mu(z)\, d\mu(y)  
\\
& \leq & \iint_{\Omega\times\Omega}V(y,z)\,d\mu(z)\, d\mu(y) .
\end{eqnarray*}
Therefore, \eqref{mest-d} is proved if we show that
\begin{equation}\label{mm10}
\lim_{M\to\infty}\limsup_{h\to0}\iint_{\Omega\times\Omega}(V(y,z)-V_M(y,z))\,d\mu^h(z)\, d\mu^h(y)= 0.
\end{equation}
We observe that the bound \eqref{imp-est} on $V$ implies that 
\begin{equation}\label{imp-est2}
|V(y,z)|>M \quad \Longrightarrow \quad |y-z|<R_M,
\end{equation}
for some $R_M>0$ such that $R_M \to 0$, as $M\to \infty$. Therefore,
\begin{equation}\label{mm2}
\iint_{\Omega\times\Omega}(V(y,z)-V_M(y,z))\,d\mu^h(z)\, d\mu^h(y)
\leq \int_\Omega\Big( \int_{B_{R_M}(y)}V(y,z)\,d\mu^h(z)\Big)\, d\mu^h(y).
\end{equation}
By estimate \eqref{below-est} of Lemma~\ref{lemma:Vcont} there exist $C(\Omega')>0$ and $\overline R>0$ such that
\begin{equation}\label{below-est-2}
C(\Omega')(1-\log|y-z|)\leq V(y,z)
\end{equation}
for every $y,z\in\Omega'$ with $0<|y-z|\leq \overline R$. Since $R_M \to 0$, as $M\to \infty$, we can assume that $R_M\ll\overline R$.
For every $k=1,\dots, N_h$ and every $p=1,\dots,P^h_k$, with $P^h_k=\lfloor R_M/(2h)\rfloor +1$, we consider the set of indices
$$
I^h_{k,p}:=\big\{1\leq j\leq N_h:\ Q_j^h=Q_k^h+2h(m,\ell), \ |m|\vee |\ell|=p\big\}.
$$
\begin{eqnarray*}
\lefteqn{\int_\Omega\Big(\int_{B_{R_M}(y)}V(y,z)\,d\mu^h(z)\Big)\, d\mu^h(y)}
\\
&  = & 
\sum_{k=1}^{N_h}\sum_{p=1}^{P^h_k}\sum_{j\in I^h_{k,p}}
\frac{\mu^h(\tilde Q_k^h)\mu^h(\tilde Q_j^h)}{h^4} \int_{Q_k^h}\Big(\int_{B_{R_M}(y)\cap Q_j^h}V(y,z)\,dz\Big)\, dy
\\
&&
{}+ \sum_{k=1}^{N_h} \frac{\big(\mu^h(\tilde Q_k^h)\big)^2}{h^4} \int_{Q_k^h}\Big(\int_{B_{R_M}(y)\cap Q_k^h}V(y,z)\,dz\Big)\, dy.
\end{eqnarray*}
For $k=1,\dots, N_h$ and $j\in I^h_{k,p}$ we have that $|y-z|\geq (2p-1)h$ for every $y\in Q_k^h$, $z\in Q_j^h$; thus,
estimate \eqref{imp-est} implies that
$$
V(y,z)\leq C\Big(1-\log\frac{(2p-1)h}{L}\Big) \quad \text{for every } y\in Q_k^h, z\in Q_j^h, j\in I^h_{k,p}.
$$
On the other hand, since $|y-z|\leq 2\sqrt2(p+1)h$ for every $y\in \tilde Q_k^h$, $z\in \tilde Q_j^h$
with $j\in I^h_{k,p}$, estimate \eqref{below-est-2} yields
$$
C(\Omega')\big(1-\log(2\sqrt2(p+1)h) \big)\leq V(y,z) \quad \text{for every } y\in \tilde Q_k^h, z\in \tilde Q_j^h, j\in I^h_{k,p}.
$$
Therefore, we obtain
\begin{eqnarray*}
\lefteqn{\sum_{j\in I^h_{k,p}}\frac{\mu(\tilde Q_k^h)\mu(\tilde Q_j^h)}{h^4} \int_{Q_k^h}\Big(\int_{B_{R_M}(y)\cap Q_j^h}V(y,z)\,dz\Big)\, dy}
\\
& \leq & C \sum_{j\in I^h_{k,p}} \mu(\tilde Q_k^h)\mu(\tilde Q_j^h)\Big(1-\log\frac{(2p-1)h}{L}\Big)
\\
& = & C \sum_{j\in I^h_{k,p}} \mu(\tilde Q_k^h)\mu(\tilde Q_j^h)\Big(1-\log(2\sqrt2(p+1)h) +\log\frac{2\sqrt2(p+1)L}{2p-1}\Big)
\\
& \leq & C  \sum_{j\in I^h_{k,p}} \int_{\tilde Q_k^h}\Big(\int_{\tilde Q_j^h} V(y,z)\,d\mu(z)\Big)\,d\mu(y) + C \sum_{j\in I^k_{h,p}}\mu(\tilde Q_k^h)\mu(\tilde Q_j^h).
\end{eqnarray*}
Similarly, by applying \eqref{imp-est} and \eqref{below-est-2} and using polar coordinates we have
\begin{eqnarray*}
\frac{\big(\mu(\tilde Q_k^h)\big)^2}{h^4} \int_{Q_k^h}\Big(\int_{B_{R_M}(y)\cap Q_k^h}V(y,z)\,dz\Big)\, dy
& \leq & C  \frac{\big(\mu(\tilde Q_k^h)\big)^2}{h^4}  \int_{Q_k^h}\Big(\int_{B_{\sqrt2 h}(y)\cap Q_k^h}V(y,z)\,dz\Big)\, dy
\\
& \leq & C  \big(\mu(\tilde Q_k^h)\big)^2\big(1-\log(\sqrt2h) \big)
\\
& \leq & C \int_{\tilde Q_k^h}\Big(\int_{\tilde Q_k^h} V(y,z)\,d\mu(z)\Big)\,d\mu(y)
+ C \big(\mu(\tilde Q_k^h)\big)^2.
\end{eqnarray*}
Combining together the previous estimates,
we conclude that
\begin{multline*}
\int_\Omega\Big(\int_{B_{R_M}(y)}V(y,z)\,d\mu^h(z)\Big)\, d\mu^h(y)
\\
\leq  C \int_\Omega\Big(\int_{B_{R_M}(y)}V(y,z)\,d\mu(z)\Big)\, d\mu(y) 
+ C\int_\Omega \mu(B_{R_M}(y))\, d\mu(y), 
\end{multline*}
where the right-hand side tends to zero, as $R_M\to0$, since ${\mathcal F}(\mu)<+\infty$. Owing to \eqref{mm2},
this proves \eqref{mm10}, hence the claim \eqref{mest-d}. 
In virtue of \eqref{mest-d} and of the metrizability of the narrow convergence on ${\mathcal P}(\Omega)$, 
it is sufficient to construct a recovery sequence for the approximating measures $\mu^h$.
\medskip

\noindent
{\em Step 2. Construction of the recovery sequence.}
Let us fix $h>0$. For every $n$ we denote with $\mu^h_n$ an approximation of the measure $\mu^h$ defined in the previous step, of the following form:
$$
\mu^h_n:=\sum_{k=1}^{N_h} c_{k,n}\chi_{Q_k^h},
$$
where the constants $c_{k,n}$ are chosen in such a way that $\sqrt{n \mu_n^h (Q_k^h)} \in \N_0$ for every $k$
and $\mu^h_n(Q_k^h)\to \mu^h(Q_k^h)$, as $n\to\infty$.

At this point we construct the recovery sequence $\mu_n= \frac1n \sum_{i=1}^n\delta_{z_i^n}$ by choosing the points $z_i^n\in \Omega$ in the following way:
\begin{enumerate}
\item to each square $Q^h_k$, allocate $n \mu_n^h(Q_k^h)$ dislocations;
\item in a given square $Q^h_k$, choose $\sqrt{n\mu_n^h(Q_k^h)}$ equispaced horizontal coordinates and $\sqrt{n \mu_n^h(Q_k^h)}$ equispaced vertical coordinates. Then the dislocations will be located at the nodes of this square grid, and their distance within the same square $Q_k^h$ satisfies the bound
$$
|z_i^n - z_j^n | > \frac{h}{\sqrt{n\mu_n^h(Q_k^h)}} \sim \frac{C(h)}{\sqrt{n}} \gg r_n,
$$
since, by assumption \eqref{hyp:enrn} $n r_n \to 0$, as $n\to\infty$. Such a sequence is therefore admissible.
\end{enumerate}
By construction we have that $\mu_n\llcorner{Q^h_k} \weakto \mu^h\llcorner{Q_k^h}$ narrowly, as $n\to\infty$, for every $k$.
This implies that $\mu_n \weakto \mu^h$ narrowly as $n\to\infty$.
\medskip

\noindent
{\em Step 3. Convergence of the energy.} It remains to prove that
\begin{equation}\label{ls-claim2m}
\limsup_{n\to \infty} \frac12
\iint_{\Omega\times\Omega}V(y,z)\, d(\mu_n\boxtimes\mu_n)(y,z) \leq  \frac12
\iint_{\Omega\times\Omega}V(y,z)\, d\mu^h(y)\, d\mu^h(z).
\end{equation}
For $k=1,\dots,N_h$ and $n\in\N$ we introduce the set
$$
J_k^n:=\big\{1\leq i \leq n:\ z_i^n \in Q_k^h\big\}.
$$
We then calculate
\begin{multline}\label{en:splitm}
\frac12\iint_{\Omega\times\Omega}V(y,z)\, d(\mu_n\boxtimes\mu_n)(y,z) 
= \frac{1}{2n^2}\sum_{\substack{i,j=1\\i\not=j}}^{n} V(z^n_i, z_j^n)
\\
= \frac1 {2n^2}\sum_{\substack{k,\ell=1\\k\not=\ell}}^{N_h}\sum_{\substack{i\in J_k^n\\j\in J_\ell^n}} V(z_i^n, z_j^n)
+ \frac1{2n^2}\sum_{k=1}^{N_h}\sum_{\substack{i,j\in J_k^n\\ i\neq j}}V(z_i^n, z_j^n).
\end{multline}
For the first term in the right-hand side of \eqref{en:splitm} we have that
\begin{equation}\label{different-cube}
\lim_{n\to \infty}  \frac1 {2n^2}\sum_{\substack{k,\ell=1\\k\not=\ell}}^{N_h}\sum_{\substack{i\in J_k^n\\j\in J_\ell^n}} V(z_i^n, z_j^n)
=\frac12\sum_{\substack{k,\ell=1\\k\not=\ell}}^{N_h}\iint_{Q^h_k\times Q^h_\ell} V(y,z)\,d\mu^h(y)\,d\mu^h(z),
\end{equation}
since $V$ is continuous and bounded on $Q^h_k\times Q^h_\ell$ for $k\neq\ell$. For the second term in \eqref{en:splitm} we consider, for fixed $M>0$, the 
decomposition $V=V_M+(V-V_M)$, where $V_M:=V\wedge M$.
Since the truncated function $V_M$ is continuous and bounded on an open set containing $\mathcal R\times\mathcal{R}$, we have
\begin{equation}\label{same-cubem}
\lim_{n\to \infty}\frac1{2n^2}\sum_{k=1}^{N_h}\sum_{\substack{i,j\in J_k^n\\ i\neq j}}V_M(z_i^n, z_j^n)
 = \frac12\sum_{k=1}^{N_h} \iint_{Q^h_k\times Q^h_k} V_M(y,z)\,d\mu^h(y)\,d\mu^h(z).
\end{equation}
Combining together \eqref{en:splitm}--\eqref{same-cubem}, since $V_M\leq V$, the claim \eqref{ls-claim2m} follows if we prove that
\begin{equation}\label{reduced-claim-m}
\lim_{M\to\infty}\limsup_{n\to\infty}\frac{1}{2n^2}
\sum_{k=1}^{N_h}\sum_{\substack{i,j\in J_k^n\\ i\neq j}} (V-V_M)(z_i^n, z_j^n) = 0.
\end{equation}

From \eqref{imp-est2}, for large enough $M$ (so that, in particular, $R_M < h$) we have that, for every $k=1,\dots,N_h$,
\begin{align}\label{trunc-estm}
\frac1{2n^2} \sum_{\substack{i,j\in J_k^n\\ i\neq j}} (V-V_M)(z_i^n, z_j^n)
\leq \frac1{2n^2} \sum_{\substack{i,j\in J_k^n \\ |z_i^n-z_j^n|<R_M \\ i\neq j}} V(z_i^n, z_j^n).
\end{align}
Now we denote with $L_k^n$ the size of the square grid in $Q_k^h$ formed by the dislocations locations, i.e.,
$$
L_k^n := \frac{h}{\sqrt{n \mu_n^h(Q_k^h)}}.
$$
Then we can rewrite the last term in \eqref{trunc-estm} as
\begin{equation}\label{est-VM}
\frac1{2n^2} \sum_{\substack{i,j\in J_k^n \\ |z_i^n-z_j^n|<R_M \\ i\neq j}} V(z_i^n, z_j^n)
= \frac1{2n^2 }\sum_{i\in J_k^n}\sum_{p=2}^{P_k^n}\sum_{j\in J_k^n(i,p)} V(z_i^n, z_j^n),
\end{equation}
where $P_k^n$ is the integer part of $R_M/L_k^n$ and, for $p=2,\dots,P_k^n$,
$$
J_k^n(i,p):=\big\{j\in J_k^n: \ (p-1) L_k^n\leq  |z_j^n-z_i^n|< pL_k^n\big\}.
$$
The cardinality of the set $J_k^n(i,p)$ can be estimated as
$$
\# (J_k^n(i,p)) \leq C(p^2 - (p-1)^2) = C(2p +1);
$$
moreover, for every $j\in J_k^n(i,p)$ we have, in virtue of \eqref{imp-est},
$$
|V(z_i^n, z_j^n)| \leq C\Big(1-\log\frac{(p-1)L_k^n}{L}\Big).
$$
Hence, we deduce
\begin{align}\label{est-V2-2}
&\Big|\frac1{2n^2 }\sum_{i\in J_k^n}\sum_{p=2}^{P_k^n}\sum_{j\in J_k^n(i,p)} V(z_i^n, z_j^n) \Big| 
\leq 
\frac C{2n^2 }\sum_{i\in J_k^n} \sum_{p=2}^{P_k^n} (2p +1)\Big(1-\log\frac{(p-1)L_k^n}{L}\Big)\nonumber\\
&\qquad\qquad \qquad\leq \frac{C \# (J_k^n)}{n^2} \sum_{p=2}^{P_k^n} \Big(2p+1 -2(p-1)\log\frac{(p-1)L_k^n}{L}
-3\log \frac{L_k^n}{L} \Big).
\end{align}
It is easy to show that, for every $p=2,\dots,P_k^n$,
$$
-2(p-1)\log\frac{(p-1)L_k^n}{L} \leq \frac{C}{L_k^n};
$$
therefore, the sum in the right-hand side of \eqref{est-V2-2} can be estimated as follows:
$$
\sum_{p=2}^{P_k^n} \Big(2p+1 + \frac{C}{L_k^n} - 3 \log \frac{L_k^n}{L} \Big) \leq CP_k^n\Big(1+\frac{1}{L_k^n}\Big) + CP_k^n(P_k^n+1).
$$
Combining this estimate with \eqref{est-V2-2} and the definition of $P_k^n$ and $L_k^n$, we deduce
\begin{align*}
\Big|\frac1{2n^2 }\sum_{i\in J_k^n}\sum_{p=2}^{P_k^n}\sum_{j\in J_k^n(i,p)} V(z_i^n, z_j^n) \Big| 
&\leq C\, \frac{\mu_n^h(Q_k^h)}{n}\Big(\frac{R_M}{L_k^n} + \frac{R_M}{(L_k^n)^2} + \frac{R_M^2}{(L_k^n)^2} \Big)
\\
&\leq C R_M \frac{\big(\mu_n^h(Q_k^h)\big)^{3/2}}{h\sqrt n} + CR_M \frac{\big(\mu_n^h(Q_k^h)\big)^2}{h^2} 
+ CR_M^2 \frac{\big(\mu_n^h(Q_k^h)\big)^2}{h^2}.
\end{align*}
From the last estimate it follows that
\begin{equation}\label{estVM-last}
\limsup_{n\to\infty} \Big|\frac1{2n^2 }\sum_{i\in J_k^n}\sum_{p=2}^{P_k^n}\sum_{j\in J_k^n(i,p)} V(z_i^n, z_j^n) \Big|  
\leq CR_M \frac{\big(\mu^h(Q_k^h)\big)^2}{h^2} 
+ CR_M^2 \frac{\big(\mu^h(Q_k^h)\big)^2}{h^2}.
\end{equation}
Since $R_M \to 0$, as $M\to \infty$, \eqref{trunc-estm}, \eqref{est-VM}, and \eqref{estVM-last} prove the claim \eqref{reduced-claim-m}. 
This concludes the proof of \eqref{ls-claim2m} and of the theorem. 
\end{proof}

\begin{remark}
We observe that there exists a positive constant $C$ such that 
\begin{equation}\label{lower-bound:calFn}
\mathcal{F}(\mu) \geq -C
\end{equation}
for every $\mu\in\mathcal P(\Omega)$.
Indeed, from \eqref{VboundC} follows directly that the interaction energy term is uniformly bounded from below. Moreover, considering
$v=0$ as test function for $I_\mu$ in the class \eqref{v-class}, we obtain
$$
\frac12\int_\Omega\C\nabla v_\mu:\nabla v_\mu\, dx\leq - \int_\Omega\int_{\partial\Omega} \C K(x;y)\nu(x)\cdot v(x)\,d\HH^1(x)\,d\mu(y),
$$
hence
$$
\int_\Omega|E v_\mu|^2 \, dx \leq \frac{C}{\ell}\|v_\mu\|_{L^2(\partial\Omega)}.
$$
By Korn's inequality and the definition of the class \eqref{v-class} we deduce that the functions $v_\mu$ are uniformly bounded in $H^1(\Omega;\R^2)$
with respect to $\mu$,
hence their traces are uniformly bounded in $L^2(\partial\Omega;\R^2)$
with respect to $\mu$. Finally, this implies that the minimum values $I_\mu(v_\mu)$ are uniformly bounded from below, as well.
\end{remark}

We conclude this section with a characterization of the class where the $\Gamma$-limit functional $\mathcal F$ is finite.

\begin{prop}
Let $\mu\in\P(\Omega)$. Then ${\mathcal F}(\mu)<+\infty$ if and only if $\mu\in H^{-1}(\Omega)$.
\end{prop}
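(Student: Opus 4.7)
The plan is to reduce finiteness of $\mathcal{F}(\mu)$ to finiteness of the logarithmic energy of $\mu$, and then to identify the latter with membership in $H^{-1}(\Omega)$ via the Newtonian potential.

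As a first step, I would decompose $\mathcal{F}(\mu)$ into the interaction term $\mathcal{I}(\mu) := \frac{1}{2}\iint V\, d\mu\otimes d\mu$ and the boundary term $I_\mu(v_\mu)$. The remark preceding this proposition shows that $I_\mu(v_\mu)$ is uniformly bounded below in $\mu$, and choosing $v\equiv 0$ as competitor together with $\supp\mu\subset\mathcal{R}$ (so that $|\mathbb{C}K(\cdot;y)\nu|\leq C/\ell$ on $\partial\Omega$ for every $y\in\mathcal{R}$) yields a uniform upper bound on $\min_v I_\mu(v)$. Hence $\mathcal{F}(\mu)<+\infty$ if and only if $\mathcal{I}(\mu)<+\infty$.

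Next, I would exploit two-sided logarithmic bounds on $V$: combining \eqref{imp-est}, \eqref{below-est} applied to an open set $\Omega'$ with $\mathcal{R}\subset\Omega'\Subset\Omega$, and \eqref{VboundC}, there exist constants $c,C>0$ such that
$$
c(1-\log|y-z|) - C \;\leq\; V(y,z) \;\leq\; C(1-\log|y-z|) + C
\qquad \text{for all } y,z\in\mathcal{R},\ y\neq z.
$$
Since $\mu$ is a probability measure, this reduces $\mathcal{I}(\mu)<+\infty$ to finiteness of the logarithmic energy $L(\mu) := -\iint\log|y-z|\, d\mu(y)\, d\mu(z)$.

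For the equivalence $L(\mu)<+\infty \Leftrightarrow \mu\in H^{-1}(\Omega)$, I would invoke the Green's function $G$ of $-\Delta$ on $\Omega$ with zero Dirichlet data, which splits as $G(x,y)=-\frac{1}{2\pi}\log|x-y|+h(x,y)$ with $h$ continuous and bounded on $\mathcal{R}\times\mathcal{R}$. For a mollified sequence $\mu_\varepsilon:=\mu\ast\rho_\varepsilon$ with $\supp\mu_\varepsilon\subset\Omega$, the smooth potential $u_{\mu_\varepsilon}(x):=\int G(x,y)\, d\mu_\varepsilon(y)$ belongs to $H^1_0(\Omega)$ and satisfies
$$
\|\mu_\varepsilon\|_{H^{-1}(\Omega)}^2 = \int_\Omega|\nabla u_{\mu_\varepsilon}|^2\, dx = \iint G\, d\mu_\varepsilon\otimes d\mu_\varepsilon = \frac{1}{2\pi}L(\mu_\varepsilon) + O(1),
$$
with $O(1)$ uniform in $\varepsilon$ (controlled by $\|h\|_{L^\infty(\mathcal{R}\times\mathcal{R})}$). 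Passing to the limit via monotone convergence for $L$ (after adding a constant to render the logarithmic kernel nonnegative on $\mathcal{R}\times\mathcal{R}$) and lower semicontinuity of the $H^{-1}$ norm shows that both sides are simultaneously finite, proving the desired equivalence. The main technical obstacle will be precisely this limit passage, since $u_\mu$ itself may fail to be a legitimate test function against $\mu$ when $L(\mu)=+\infty$; the mollification argument works because the positivity of the shifted kernel forces monotone behaviour of both quantities, but one has to verify the identification $\|\mu\|_{H^{-1}}=\lim_\varepsilon \|\mu_\varepsilon\|_{H^{-1}}$ (with the convention $+\infty$ on either side) with care.
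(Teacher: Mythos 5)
Your route is genuinely different from the paper's. The paper recasts $\mathcal F(\mu)$ as the quadratic elastic energy $\tfrac12\int_\Omega\C\beta^*_\mu:\beta^*_\mu\,dx$ of the strain $\beta^*_\mu=\nabla v_\mu+\int_\Omega K(\cdot;y)\,d\mu(y)$, identified (up to skew parts) as the solution of $\Div\C\beta^*_\mu=0$ in $\Omega$, $\C\beta^*_\mu\nu=0$ on $\partial\Omega$, $\curl\beta^*_\mu=\mu\,e_1$; the ``if'' direction is an energy comparison with the $L^2$ solution of the curl constraint, and the ``only if'' direction uses the piecewise-constant approximations $\mu^h$ from Step~1 of the limsup proof together with the generalized Korn inequality of \cite{GarroniLeoniPonsiglione10} to upgrade $\sym\beta^*_\mu\in L^2$ to $\beta^*_\mu\in L^2$. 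You instead strip off the boundary term $\min_v I_\mu(v)$ (which is indeed uniformly bounded, by the preceding remark and the competitor $v\equiv 0$), reduce finiteness of $\mathcal F$ to finiteness of the logarithmic energy $L(\mu)$ through the two-sided bounds \eqref{imp-est}, \eqref{below-est}, \eqref{VboundC}, and then identify $L(\mu)<+\infty$ with $\mu\in H^{-1}(\Omega)$ via the Dirichlet Green's function. The paper's proof stays within the paper's elasticity machinery and delivers the extra identity $\mathcal F(\mu)=\tfrac12\int_\Omega\C\beta^*_\mu:\beta^*_\mu\,dx$ whenever finite; your proof avoids the generalized Korn inequality entirely and reduces the question to a scalar, classical potential-theoretic fact.

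The one step that is not correctly justified as written is the limit passage. Adding a constant so that the shifted kernel is nonnegative gives \emph{one} inequality only, namely $L(\mu)\le\liminf_\varepsilon L(\mu_\varepsilon)$ (lower semicontinuity of the nonnegative l.s.c.\ kernel along $\mu_\varepsilon\otimes\mu_\varepsilon\rightharpoonup\mu\otimes\mu$). Nonnegativity of the kernel does not by itself ``force monotone behaviour'' of $L(\mu_\varepsilon)$, and without the reverse bound $L(\mu_\varepsilon)\le L(\mu)$ you cannot conclude that $L(\mu)<\infty$ keeps $\|\mu_\varepsilon\|_{H^{-1}(\Omega)}$ bounded and hence, by weak lower semicontinuity of the $H^{-1}$ norm, that $\mu\in H^{-1}(\Omega)$. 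The reverse bound is true, but its source is the superharmonicity of $-\log|\cdot|$ (equivalently, of the Dirichlet Green's function off the diagonal): for a radial mollifier, $(-\log)\ast\rho_\varepsilon\ast\tilde\rho_\varepsilon\le -\log$ by the mean-value inequality, which gives $L(\mu_\varepsilon)\le L(\mu)$ directly. Once you replace ``positivity'' by ``superharmonicity/positive-definiteness'' the argument closes; the opposite implication, that a positive measure in $H^{-1}(\Omega)$ has finite Green energy, is the other half of the same classical equivalence (Landkof, Adams--Hedberg). If you do not wish to rederive these facts, simply cite them, but as stated the justification for the crucial inequality is the wrong one.
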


\begin{proof}
Let $\mu\in\P(\Omega)$ and let \begin{equation}\label{beta}
\beta^*_\mu(x):=\nabla v_\mu(x)+\int_\Omega K(x;y)\,d\mu(y),
\end{equation}
where $v_\mu$ is the function associated to $\mu$ according to Section~\ref{subsec:aux-functions}, that is, the unique minimizer of $I_\mu$ on the class
\eqref{def:Iclass}. By Fubini Theorem we have that $\beta_\mu\in L^1(\Omega;\mtwo)$.
Moreover, $\beta_\mu$ is the unique (up to skew-symmetric matrices) solution of the system
\begin{equation}\label{beta-sys}
\begin{cases}
\Div\C\beta_\mu=0 & \text{in } \Omega,
\\
\C\beta_\mu\nu=0 & \text{on }\partial\Omega,
\\
\curl\beta_\mu=\mu\, e_1 & \text{in } \Omega.
\end{cases}
\end{equation}

Let now $\mu\in\P(\Omega)\cap H^{-1}(\Omega)$. Then the problem
$$
\min \Big\{ \frac12 \int_\Omega \C\beta(x):\beta(x)\, dx : \ \beta\in L^2(\Omega;\mtwo), \ \curl\beta=\mu\, e_1 \Big\}
$$
has a solution $\tilde\beta$ (indeed, the class where the minimum is taken is not empty:
for instance, we can solve $\Delta u=\mu\, e_1$, $u\in H^1_0(\Omega;\R^2)$, and take 
as $\beta$ a $\pi/2$ rotation of $\nabla u$). Since $\tilde\beta$ is a solution to \eqref{beta-sys}, 
we deduce that $\beta^*_\mu\in L^2(\Omega;\mtwo)$. Now, by Fubini Theorem we may write
\begin{eqnarray*}
\lefteqn{\frac12 \int_\Omega \C \beta^*_\mu{\,:\,}\beta^*_\mu\, dx}
\\
& = & \frac12 \int_\Omega \C \nabla v_\mu{\,:\,}\nabla v_\mu\, dx
+ \int_\Omega \int_\Omega  \C K(x;y){\,:\,}\nabla v_\mu\, dx\,d\mu(y)
+  \frac12 \int_{\Omega\times\Omega} V(y;z)\, d\mu(y)\,d\mu(z) 
\\
& = & \vphantom{\int}{\mathcal F}(\mu),
\end{eqnarray*}
hence we conclude that ${\mathcal F}(\mu)<+\infty$.

Conversely, let $\mu\in\P(\Omega)$ be such that ${\mathcal F}(\mu)<+\infty$. By Step~1 in the proof of the limsup inequality
of Theorem~\ref{ups:density} there exists a sequence $(\mu^h)\subset L^\infty(\Omega)$ such that $\mu^h\rightharpoonup\mu$ narrowly and
\begin{equation}\label{en1}
\limsup_{h\to0} {\mathcal F}(\mu^h)\leq {\mathcal F}(\mu)<+\infty.
\end{equation}
Let $\beta^*_h$ be defined as in \eqref{beta} with $\mu$ replaced by $\mu^h$. 
Since $(\mu^h)\subset L^\infty(\Omega)$ (hence, in particular, in $H^{-1}(\Omega)$),
the previous argument guarantees that $\beta^*_h\in L^2(\Omega;\mtwo)$ and by Fubini Theorem 
\begin{equation}\label{en2}
\frac12 \int_\Omega \C \beta^*_h {\,:\,}\beta^*_h \, dx
={\mathcal F}(\mu_h)
\end{equation}
for every $h$.
Combining \eqref{en1} and \eqref{en2}, we obtain that the sequence $(\sym\beta^*_h)$ is uniformly bounded in $L^2(\Omega;\mtwo)$.

On the other hand, since $\mu^h\rightharpoonup\mu$, we have that $\nabla v_{\mu^h}\rightharpoonup \nabla v_\mu$ weakly in
$L^2(\Omega;\mtwo)$ and that
$$
\int_\Omega K(\cdot;y)\,d\mu^h(y) \rightharpoonup \int_\Omega K(\cdot;y)\,d\mu(y)
$$
in the sense of distributions. Thus, $\beta^*_h\rightharpoonup \beta^*_\mu$ in the sense of distributions.
We conclude that $\sym\beta^*_\mu\in L^2(\Omega;\mtwo)$. By the generalized Korn inequality \cite[Theorem~11]{GarroniLeoniPonsiglione10}
we deduce that
$\beta^*_\mu\in L^2(\Omega;\mtwo)$, hence $\mu\in H^{-1}(\Omega)$.
\end{proof}

%%%%%%%%%%%%%%%%%%%%%%%%%%%%%%%%%%%%%%%%%%%%%%%%%%%%%%%%%%%%
%%%%%%%%%%%%%%%%%%%%%%%%%%%%%%%%%%%%%%%%%%%%%%%%%%%%%%%%%%%%
%%%%%%%%%%%%%%%%%%%%%%%%%%%%%%%%%%%%%%%%%%%%%%%%%%%%%%%%%%%%
%%%%%%%%%%%%%%%%%%%%%%%%%%%%%%%%%%%%%%%%%%%%%%%%%%%%%%%%%%%%

\section{Quasi-static evolution of dislocation energies}\label{Sect:quasistatic} 
In this section we prove the existence of a quasi-static evolution for the energy-dissipation system given by the renormalised dislocation energy $\mathcal{F}_n$ defined in \eqref{def:curly-F-intro}, and the modified, slip-plane-confined, Wasserstein distance $d$ in~\eqref{def:d} as dissipation distance. 

\subsection{Wasserstein-type dissipation}\label{subsec:d}

The dissipation term is defined in terms of a Wasserstein-type distance $d$~\eqref{def:d}--\eqref{def:Gamma}, which is finite only on pairs of measures with the same 
vertical marginals. This implies that during the evolution only the horizontal positions of the dislocations will be allowed to vary, i.e., each dislocation will be only allowed to move within its slip plane. 

The next lemma summarises a number of properties of the distance $d$ and its relation with the standard Wasserstein distance $d_1$ and with narrow convergence. 
Recall that $d_1$ is defined by
\begin{equation}\label{def:d1}
d_1(\mu,\nu) := \inf_{\gamma\in \Gamma_1(\mu,\nu)} \iint_{\Omega\times\Omega} |x-y|\, d\gamma(x,y),
\end{equation}
where $\Gamma_1(\mu,\nu)$ is the following set of couplings of $\mu$ and $\nu$:
\begin{equation}\label{def:Gamma1}
\Gamma_1(\mu,\nu) := \bigl\{\gamma\in \P(\Omega\times\Omega): \gamma(A\times \Omega) = \mu(A),\ 
  \gamma(\Omega\times A) = \nu(A)\text{ for all Borel sets }A\subset \Omega \bigr\}.
\end{equation}
Since $\Omega$ is bounded, $d_1$ generates the topology of narrow convergence, i.e., convergence against continuous and bounded functions
on $\Omega$.

\begin{lemma}
\label{lemma:props-d}
The following properties are satisfied.
\begin{enumerate}
\item \label{lemma:props-d:1}
Convergence in $d$ implies narrow convergence. 
\item \label{lemma:props-d:12} 
The $d_1$-distance of measures $\mu,\nu \in \mathcal{P}(\Omega)$ is bounded from below by the $d_1$-distance of their horizontal marginals, namely
$$
d_1((\pi_1)_\#\mu, (\pi_1)_\#\nu) \leq d_1(\mu,\nu)\leq d(\mu,\nu).
$$
\item \label{lemma:props-d:2}
The metric $d$ is lower semicontinuous with respect to narrow convergence: if $\mu_k\weakto \mu$ and $\nu_k\weakto \nu$, then $d(\mu,\nu)\leq \liminf_{k\to\infty} d(\mu_k,\nu_k)$. 
\item \label{lemma:props-d:3}
The dissipation $\mathcal D$, introduced in \eqref{e:var-diss}, is lower semicontinuous with respect to pointwise narrow convergence: if $\mu_k,\mu:[0,T]\to\P(\Omega)$ are such that $\mu_k(t)\weakto \mu(t)$ for all $t\in [0,T]$, then $
\mathcal D(\mu,[0,T])\leq \liminf_{k\to\infty} \mathcal D(\mu_k,[0,T]).
$
\item \label{lemma:props-d:dual}
We have the dual characterization
\begin{multline}
\label{lemma:props-d:dual-eq}
d(\mu,\nu) = \sup\biggl\{ \int_\Omega \phi\,(d\mu-d\nu): \ \phi\in C_c^\infty(\R^2),\\ |\phi(x_1,x_2)-\phi(x_1',x_2)|\leq |x_1-x_1'| \text{ for all }x_1,x_1',x_2 \biggr\}.
\end{multline}
\item \label{lemma:props-d:char-1-eps} 
For $\e>0$ let $d_{1,\e}$ be the Monge-Kantorovich transport distance with cost function $c_\e(x,y) = |x_1-y_1| + \e^{-1}|x_2-y_2|$;
then
\begin{equation}
\label{lemma:props-d:char-1-eps-eq}
d(\mu,\nu) = \lim_{\e\to0^+} d_{1,\e}(\mu,\nu).
\end{equation}
\item 
\label{lemma:props-d:5}
For any measurable curve $t\mapsto \mu(t,\cdot)$, define
\begin{multline}
\mathscr{D}(\mu,[0,T]):=\inf\biggl\{\int_0^T \int_\Omega  |\phi(t,x)|\,d\mu(t)\,dt: 
  \quad \phi:(0,T)\times\R^2\to \R \text{ Borel measurable,}\\
  \partial_t \mu +\partial_{x_1}(\phi\mu)=0 \text{ as distributions on }(0,T)\times\R^2\biggr\},
\label{s:dissipation}
\end{multline}
with the convention that $\inf \emptyset = +\infty$.
If $\mathscr D(\mu,[0,T])<\infty$, then $\mathscr D(\mu,[0,T]) = \mathcal D(\mu,[0,T])$.\end{enumerate}
\end{lemma}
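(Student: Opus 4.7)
The proof splits into the two inequalities $\mathcal{D}\leq \mathscr{D}$ and $\mathscr{D}\leq\mathcal{D}$, and I treat them separately.

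For the first inequality, I use the dual characterization of $d$ in item~\ref{lemma:props-d:dual}. Let $\phi$ be any field admissible for $\mathscr{D}$. Given a test function $\varphi \in C_c^\infty(\R^2)$ that is $1$-Lipschitz in $x_1$ uniformly in $x_2$, testing the continuity equation $\partial_t\mu + \partial_{x_1}(\phi\mu) = 0$ against $\varphi$ on an interval $[s,t]$ gives
\begin{equation*}
\int_\Omega \varphi\, d(\mu(t)-\mu(s)) \;=\; \int_s^t\int_\Omega \partial_{x_1}\varphi\cdot \phi\, d\mu\, d\tau \;\leq\; \int_s^t\int_\Omega |\phi|\, d\mu\, d\tau.
\end{equation*}
Taking the supremum over such $\varphi$ and using \eqref{lemma:props-d:dual-eq} yields $d(\mu(s),\mu(t))\leq \int_s^t\int_\Omega |\phi|\, d\mu\, d\tau$. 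Summing over the nodes of any partition and then taking the partition supremum followed by the infimum over $\phi$ gives $\mathcal{D}\leq \mathscr{D}$.

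For the reverse inequality, the hypothesis $\mathscr{D}<\infty$ combined with the first step gives $\mathcal{D}<\infty$, so $\mu$ is $d$-continuous and the vertical marginal $\rho_2:=(\pi_2)_\#\mu(t)$ is independent of $t$. For each $n\in\N$ I partition $[0,T]$ uniformly with endpoints $t_j^n = jT/n$ and pick an optimal horizontal coupling $\gamma_j^n \in \Gamma(\mu(t_j^n),\mu(t_{j+1}^n))$ for $d$. Define the McCann-type linear interpolation
\begin{equation*}
\mu_n(t) := (T_{\lambda_j(t)})_\# \gamma_j^n, \qquad T_\lambda(x,y) := \bigl((1-\lambda)x_1 + \lambda y_1,\, x_2\bigr),
\end{equation*}
for $t\in [t_j^n,t_{j+1}^n)$ with $\lambda_j(t) = (t-t_j^n)n/T$; the constraint $\pi_2(x)=\pi_2(y)$ on $\gamma_j^n$ makes $T_\lambda$ unambiguous. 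A direct computation produces a horizontal Borel velocity field $\phi_n$ with $\partial_t\mu_n + \partial_{x_1}(\phi_n\mu_n) = 0$ and
\begin{equation*}
\int_0^T\int_\Omega |\phi_n|\, d\mu_n\, dt \;=\; \sum_{j=0}^{n-1} d\bigl(\mu(t_j^n),\mu(t_{j+1}^n)\bigr) \;\leq\; \mathcal{D}(\mu,[0,T]).
\end{equation*}

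It remains to pass to the limit $n\to\infty$. The scalar measures $|\phi_n|\mu_n\,dt$ and the signed measures $\phi_n\mu_n\,dt$ on $(0,T)\times \Omega$ have masses bounded by $\mathcal{D}$, so along a subsequence they converge weakly* to a scalar measure $m$ and a signed measure $\sigma$ with $|\sigma|\leq m$. The $d$-continuity of $\mu$, together with $d(\mu_n(t),\mu(t_j^n))\leq d(\mu(t_j^n),\mu(t_{j+1}^n))\to 0$ uniformly in $j$, yields $\mu_n\,dt \weakto \mu\,dt$ and, by linearity, $\partial_t\mu + \partial_{x_1}\sigma = 0$ in the sense of distributions. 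The main obstacle is to upgrade the bound $|\sigma|\leq m$ to the existence of a Borel density $\phi$ with $\sigma = \phi\,\mu\,dt$ and $\int_0^T\!\int_\Omega|\phi|\,d\mu\,dt \leq \mathcal{D}$. I would handle this by disintegrating $\mu(t)$ along the time-independent $\rho_2$ into slice measures $\nu_t^y \in \P(\R)$ for $\rho_2$-a.e.\ $y$: one then has $d(\mu(s),\mu(t)) = \int d_1^\R(\nu_s^y,\nu_t^y)\,d\rho_2(y)$ and hence a similar disintegration of $\mathcal{D}$, which reduces the problem to the classical one-dimensional identity (length equals minimal continuity-equation action for the $1$-Wasserstein metric on $\R$) on each horizontal slice; a measurable selection glues the slice-wise fields $\phi^y$ into the required $\phi$, giving $\mathscr{D}(\mu,[0,T]) \leq \mathcal{D}(\mu,[0,T])$.
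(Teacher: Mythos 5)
Your argument for the first inequality $\mathcal D\leq\mathscr D$ is essentially the paper's: test the continuity equation against $1$-Lipschitz-in-$x_1$ functions, integrate over $[s,t]$, use the dual characterization of $d$ from Part~\ref{lemma:props-d:dual}, sum over partitions, and take the infimum over $\phi$. That part is fine.

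For the reverse inequality $\mathscr D\leq\mathcal D$ you take a genuinely different and considerably harder route, and there is a real gap at the end. The paper's argument is much shorter and does not attempt to construct anything: since $\mathscr D<\infty$, at least one admissible $\phi$ exists. Pick any such $\phi$ and test the continuity equation against the piecewise-constant-in-time function $\zeta(t,\cdot)=\xi_i$ on $[t_i,t_{i+1})$, where $|\partial_{x_1}\xi_i|\le1$; each block is bounded by $d(\mu(t_i),\mu(t_{i+1}))$ by Part~\ref{lemma:props-d:dual}, so the sum is bounded by $\mathcal D(\mu,[0,T])$. Smoothing in $t$ and noting that every $\psi$ with $|\psi|\le 1$ can be written as $\partial_{x_1}\zeta$ for an admissible $\zeta$, one obtains $\int_0^T\!\int_\Omega\phi\psi\,d\mu\,dt\le\mathcal D$ for all such $\psi$; the dual description of the total variation then gives $\int_0^T\!\int_\Omega|\phi|\,d\mu\,dt\le\mathcal D$, hence $\mathscr D\le\mathcal D$. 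No limit procedure, no compactness, no regularity of $\mu$.

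Your construction — McCann-type interpolation along optimal horizontal couplings on a sequence of refining partitions, producing $\phi_n$, $\mu_n$ with $\int_0^T\!\int|\phi_n|\,d\mu_n\,dt\le\mathcal D$ — is a reasonable direct-method strategy and the preliminary steps are sound (the bound $d(\mu_n(t),\mu(t_j^n))\le d(\mu(t_j^n),\mu(t_{j+1}^n))$ holds via the coupling induced by $\gamma_j^n$, and $\mathscr D<\infty$ does force $d$-absolute continuity of $\mu$ through the first-step estimate). But the final passage is a genuine gap: from $|\sigma|\le m$ and $\partial_t\mu+\partial_{x_1}\sigma=0$ you need $\sigma\ll\mu\,dt$ with a density of action $\le\mathcal D$, and this is precisely the nontrivial point. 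Weak-$*$ convergence of $\mu_n\,dt$ to $\mu\,dt$ does not by itself give absolute continuity of the limit flux $\sigma$ with respect to $\mu\,dt$. Your proposed fix — disintegrate $\mu(t)$ along the fixed vertical marginal, invoke a one-dimensional length-equals-action identity slice by slice, and re-glue by measurable selection — can be made to work, but each of those steps (measurability of the slice-wise optimal couplings in $y$, the 1D identity via absolutely continuous pseudo-inverse CDFs, Borel measurability of the glued velocity $\phi(t,x_1,y)$) is a small theorem of its own and would need to be proved rather than announced. In short: your second inequality is a plausible but incomplete constructive alternative to the paper's duality argument; as written it does not close the proof.
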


\begin{remark}
The condition that $\mathscr D <\infty$ in Part~\ref{lemma:props-d:5} is necessary: for instance, for two points $x,\ \tilde x\in \Omega$ with $x_2=\tilde x_2$,  the curve $\mu(t) = t\delta_x + (1-t)\delta_{\tilde x}$ has $\mathcal D(\mu,[0,1]) = |x_1-\tilde x_1|$. However, $\mathscr D(\mu,[0,1]) = +\infty$, since the `jumping' of the mass from $x$ to $\tilde x$ can not be represented by the continuity equation $\partial_t \mu + \partial_{x_1}(\phi\mu) =0$.
\end{remark}

\begin{proof}
By definition $d_1(\mu,\nu)\leq d(\mu,\nu)$, which implies Part~\ref{lemma:props-d:1}. For Part~\ref{lemma:props-d:12} we observe that 
$$
\gamma \in \Gamma_1(\mu,\nu) \,\, \Rightarrow \,\,  (\pi_1\times \pi_1)_\#\gamma \in \Gamma_1( (\pi_1)_\#\mu,(\pi_1)_\#\nu).
$$
Thus, using the fact that $\pi_1(x)=x_1$, we have
\begin{align*}
d_1((\pi_1)_\#\mu, (\pi_1)_\#\nu) &= \inf_{\hat \gamma\in \Gamma_1((\pi_1)_\#\mu, (\pi_1)_\#\nu)} \iint_{\mathbb{R}\times\mathbb{R}} |x_1-y_1|\, d\hat \gamma(x_1,y_1)\\
 &\leq  \inf_{\gamma\in \Gamma_1(\mu, \nu)} \iint_{\mathbb{R}\times\mathbb{R}} |x_1-y_1|\, d\big( (\pi_1\times \pi_1)_\#\gamma\big)(x_1,y_1)\\
 &= \inf_{\gamma\in \Gamma_1(\mu, \nu)} \iint_{\mathbb{R}^2\times\mathbb{R}^2} |\pi_1 (x)-\pi_1 (y)|\, d \gamma (x,y)\\
 &\leq \inf_{\gamma\in \Gamma_1(\mu, \nu)} \iint_{\mathbb{R}^2\times\mathbb{R}^2} |x-y|\, d \gamma (x,y) = d_1(\mu,\nu).
\end{align*}

We next prove Part~\ref{lemma:props-d:char-1-eps}. Note that $d_{1,\e}(\mu,\nu)\leq d(\mu,\nu)$. To prove $d(\mu,\nu) = \lim_{\e\to0^+} d_{1,\e}(\mu,\nu)$, first assume that $d(\mu,\nu)<\infty$ and let $\gamma_{1,\e}$ be an optimal transport plan for $d_{1,\e}(\mu,\nu)$; since $\mu$ and $\nu$ are fixed, $\gamma_{1,\e}$ is tight, and as $\e\to0^+$ we can assume that $\gamma_{1,\e}\weakto \gamma_{1,0}$ in $\P(\Omega\times\Omega)$. The inequality
$$
\iint_{\Omega\times\Omega} \Big(|x_1-y_1| + \frac1\e |x_2-y_2|\Big) \, d\gamma_{1,\e}(x,y) = d_{1,\e}(\mu,\nu)\leq d(\mu,\nu)
$$
then implies that the limit satisfies $\gamma_{1,0}\in \Gamma(\mu,\nu)$. Therefore $\gamma_{1,0}$ is admissible for $d(\mu,\nu)$, and 
$$
\liminf_{\e\to0^+} d_{1,\e}(\mu,\nu) \geq  \iint_{\Omega\times\Omega}|x_1-y_1|  \, d\gamma_{1,0}(x,y) \geq d(\mu,\nu).
$$
This, together with the inequality $d_{1,\e}(\mu,\nu)\leq d(\mu,\nu)$, proves~\eqref{lemma:props-d:char-1-eps-eq}. On the other hand, if $d(\mu,\nu)=\infty$, then this argument implies that  $d_{1,\e}(\mu,\nu)$ can not remain bounded as $\e\to0^+$, which again implies~\eqref{lemma:props-d:char-1-eps-eq}.

From~\eqref{lemma:props-d:char-1-eps-eq} directly follows Part~\ref{lemma:props-d:2}, since $d_{1,\e}$ is lower semicontinuous with respect to narrow convergence for each $\e>0$ and $d_{1,\e}(\mu,\nu)\leq d(\mu,\nu)$.
Part~\ref{lemma:props-d:3} follows from Part~\ref{lemma:props-d:2} and the definition~\eqref{e:var-diss}. 

We next prove Part~\ref{lemma:props-d:dual}. For $\alpha>0$ define the norm $|(x_1,x_2)|_{\alpha} = |x_1| + \alpha^{-1} |x_2|$, so that $d_{1,\e}$ is the transport distance with respect to the norm $|\cdot|_\e$. Fix $\eta>0$, and choose $\e>0$ such that $d(\mu,\nu)\leq d_{1,\e}(\mu,\nu) + \eta$; using the dual characterization for the Monge-Kantorovich transport distance (e.g.~\cite[Theorem~1.14]{Villani03}), there exists a function $\phi$ with $|\phi(x)-\phi(y)|\leq |x-y|_\e$, and such that $d_{1,\e}(\mu,\nu) = \int_{\Omega}\phi\,(d\mu-d\nu)$. At the cost of another $\eta$ we can also assume that $\phi\in C_c^\infty(\R^2)$, so that 
$$
d(\mu,\nu)\leq 2\eta + \int_{\Omega} \phi\, (d\mu-d\nu).
$$
Since $|\phi(x_1,x_2)-\phi(x_1',x_2)|\leq |(x_1,x_2)-(x_1',x_2)|_\e = |x_1-x_1'|$, $\phi$ is admissible in the supremum in~\eqref{lemma:props-d:dual-eq}. Since $\eta$ is arbitrary, it follows that $d(\mu,\nu)$ is bounded from above by the right-hand side in~\eqref{lemma:props-d:dual-eq}. For the opposite inequality, let $\gamma\in \Gamma(\mu,\nu)$. By applying the disintegration theorem to $\mu$ and $\nu$, we can write $\mu(x_1,x_2) = \zeta(x_2)\tilde \mu_{x_2}(x_1)$ and  $\nu(y_1,y_2) = \zeta(y_2)\tilde\nu_{y_2}(y_1)$, where $\zeta := (\pi_2)_\#\mu = (\pi_2)_\#\nu$ is the common vertical marginal of $\mu$ and $\nu$. Since $\gamma$ only transports from points $x$ to $y$ with $x_2=y_2$, we have $\gamma(x_1,x_2,y_1,y_2) = \zeta(x_2)\delta_{x_2}(y_2)\tilde \gamma_{x_2}(x_1,y_1)$, where the marginal transport plan $\tilde \gamma_{x_2}$ is  an element of $\Gamma_1(\tilde\mu_{x_2},\tilde \nu_{x_2})$
for $\zeta$-a.e.\ $x_2$. We can then write
\begin{align*} 
\iint_{\Omega\times\Omega} |x-y|\, d\gamma(x,y)&= \int_\R \int_{\R^2} |x_1-y_1|\,d\tilde\gamma_{x_2}(x_1,y_1) \,d\zeta(x_2)\\
&\geq \int_\R d_1(\tilde\mu_{x_2},\tilde \nu_{x_2}) \,d\zeta(x_2)\\
&\geq \int_\R \int_\R \phi(x_1,x_2)\, (d\tilde\mu_{x_2}-d\tilde \nu_{x_2})\, d\zeta(x_2)
\end{align*}
for any $\phi$ $1$-Lipschitz in $x_1$, from which follows the assertion.

For Part~\ref{lemma:props-d:5}, we first show the inequality $\mathcal D\leq \mathscr D$. Since we assume $\mathscr D(\mu,[0,T])<\infty$, we may choose an admissible $\phi$. For any $\xi\in C_c^\infty(\R^2)$ with $|\partial_{x_1}\xi|\leq 1$, write 
$$
f(t) := \int_{\R^2} \xi(x)\,d\mu(t)\qquad\text{and}\qquad g(t) := \int_{\R^2} \phi(x,t)\, \partial_{x_1} \xi(x)\, d\mu(t),
$$
and note that $g\in L^1(0,T)$ since $\mathscr D(\mu,[0,T])<\infty$. From the continuity equation it follows that $\partial_t f = g$ in the sense of distributions on $(0,T)$, which implies that $f$ is absolutely continuous. From the fundamental theorem of calculus and the bound $|\partial_{x_1}\xi|\leq 1$ we then deduce 
$$
\int_{\Omega} \xi(x)\,(d\mu(\sigma)-d\mu(s)) = \int_s^\sigma \int_{\Omega} \phi(t,x)\,  \partial_{x_1} \xi(x) \,d\mu(t)\, dt 
\leq \int_s^\sigma \int_{\Omega} |\phi(t,x)|\, d\mu(t)\, dt.
$$
In combination with the dual characterization (part~\ref{lemma:props-d:dual}) this implies the inequality $\mathcal D\leq \mathscr D$.

To show the opposite inequality $\mathscr D\leq \mathcal D$, by the same argument as above we have, for any partition $\{t_i\}$ of $[0,T]$, and $\xi_i\in C_c^\infty(\R^2)$ with $|\partial_{x_1}\xi|\leq 1$, 
\begin{equation}
\label{ineq:D=D-1}
\sum_i \int_{t_i}^{t_{i+1}} \!\!\int_\Omega \phi(t,x)\,\partial_{x_1}\xi_i(x) \, d\mu(t)\, dt 
=\sum_i \int_\Omega  \xi_i(x)\, (d\mu(t_{i+1})-d\mu(t_i))\leq \mathcal D(\mu, [0,T]).
\end{equation}
By approximation it follows that 
$$
\int_0^T \int_{\R^2} \phi(t,x)\, \partial_{x_1}\zeta(x) \, d\mu(t)\, dt \leq \mathcal D(\mu, [0,T])
\qquad \text{for all $\zeta\in C_c^\infty([0,T]\times\R^2)$, $|\partial_{x_1}\zeta|\leq 1$,}
$$
and by defining $\partial_{x_1}\zeta =: \psi$, up to a modification outside of $\Omega$ to obtain compact support, we find
$$
\int_0^T \int_{\R^2} \phi(t,x) \,\psi(t,x) \, d\mu(t)\, dt \leq \mathcal D(\mu, [0,T])
\qquad \text{for all $\psi\in C_c^\infty([0,T]\times\R^2)$, $|\psi|\leq 1$.}
$$
The inequality $\mathscr D\leq \mathcal D$ then follows from the dual characterization of the total variation.
\end{proof}

\subsection{Loading term}
Let $[0,T]$ be the interval of time in which we observe our process.
The evolution of the system is driven by a forcing term of the form 
$$
\int_\Omega f(t,x)\,d\mu(x),
$$
where $f\in W^{1,1}(0,T;C(\Omega))$, which leads to the time-dependent total energy $\widetilde {\mathcal F}_n(\mu,t)$ defined as 
$$
\widetilde {\mathcal F}_n(\mu,t):= {\mathcal F}_n(\mu) - \int_\Omega f(t,x)\,d\mu(x).
$$
The function $f$ in the force term can be interpreted as the potential of a force, in the sense that $\partial_{x_1} f$ can be seen as a shear stress imposed on the system in the horizontal direction. 
This is clear in the case $f(t,x) = \sigma(t)x_1$, where $\sigma$ is a time dependent but spatially uniform shear stress acting on the body.

\subsection{Minimisation problem for the semi-discrete energy} 
In order to prove existence of solutions for the continuous-time quasi-static problem (Definition~\ref{def:quasistat}) we follow the common strategy of time discretization, and we first study the minimisation problem that defines a single time step. Given the current value $\mu^0$ of the dislocation density and the updated value of the force potential $f$ (for this discussion the time $t$ is fixed), the updated value of the dislocation density is obtained by solving the minimisation problem 
\begin{equation}\label{m_p}
\min_{\mu\in X_n} \left\{\mathcal F_n(\mu) + d(\mu, \mu^0) - \int_\Omega f\,d\mu\right\}.
\end{equation}

\begin{lemma}[Continuity of $\mathcal F_n$ and existence of solutions of the incremental problem]
\label{lemma:lscQe-ex}
Let $n\in\N$ and let $\mu^k,\mu\in X_n$ be such that $\mu^k\weakto\mu$ narrowly. Then
$$
\lim_{k\to\infty}\mathcal F_n(\mu^k)=\mathcal F_n(\mu).
$$
For $\mu^0\in X_n$ and $f\in C(\Omega)$ the minimisation problem \eqref{m_p} has a solution. 
\end{lemma}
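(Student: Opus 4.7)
The plan is to prove the two statements separately, treating the continuity of $\mathcal{F}_n$ first and deducing existence by the direct method.

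\textbf{Continuity of $\mathcal{F}_n$.} Fix $n\in\N$. Since each $\mu^k\in X_n$ can be written as $\mu^k=\frac{1}{n}\sum_{i=1}^n\delta_{z_i^k}$ with $\{z_i^k\}\subset\mathcal R$ satisfying $|z_i^k-z_j^k|\geq r_n$, the compactness of $\mathcal R$ allows me, up to a subsequence and a permutation of indices, to assume $z_i^k\to \bar z_i$ for every $i$. The $r_n$-separation is preserved in the limit, and since $\mu^k\weakto\mu$ narrowly, the limit measure must be $\mu=\frac{1}{n}\sum_i\delta_{\bar z_i}\in X_n$. I would then work from the decomposition \eqref{renorm:F}:
\begin{equation*}
\mathcal F_n(\mu^k)= \frac1{2n^2}\sum_{i\neq j}\int_{\Omega_n(\mu^k)}\C K^n_{z^k_i}:K^n_{z^k_j}\,dx+\frac1{2n}\sum_i\int_{\partial\Omega_n(\mu^k)}\C K^n_{z^k_i}\nu\cdot u_{\mu^k}\,d\HH^1.
\end{equation*}
The first sum is continuous in $(z^k_i,z^k_j)$ as these points remain separated by at least $r_n>0$ and stay at distance $\geq\ell$ from $\partial\Omega$: the integrands $\C K^n_{z^k_i}:K^n_{z^k_j}$ are uniformly bounded by an $L^1(\Omega)$ function (cf.\ the estimates in the proof of Lemma~\ref{lemma:u-est}) and converge a.e.\ on $\Omega_n(\mu)$, and the symmetric difference $\Omega_n(\mu^k)\triangle\Omega_n(\mu)$ has vanishing measure, so dominated convergence applies.

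\textbf{Convergence of the boundary term.} For the second sum the main point is the convergence of $u_{\mu^k}$. Using the $H^1$-extension $\tilde u_{\mu^k}$ provided by Lemma~\ref{lemma:u-est}, which satisfies $\|\tilde u_{\mu^k}\|_{H^1(\Omega)}\leq C$ with $C$ independent of $k$, I extract a subsequence with $\tilde u_{\mu^k}\weakto u^*$ in $H^1(\Omega;\R^2)$. The normalization \eqref{normal} passes to the limit on $B$, and the minimality of $u_{\mu^k}$ for $I_{n,\mu^k}$ combined with the pointwise convergence $I_{n,\mu^k}(u)\to I_{n,\mu}(u)$ on any fixed $u\in H^1(\Omega;\R^2)$ (which follows from the smoothness of $K^n_z$ away from $z$ and the Hausdorff convergence of the excluded balls) identifies $u^*{\big|}_{\Omega_n(\mu)}=u_\mu$ by uniqueness. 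Strong convergence of traces on $\partial\Omega$ and a parametrization of $\partial B_{\e_n}(z^k_j)$ by the moving center then yield the convergence of the boundary integrals and hence $\mathcal F_n(\mu^k)\to\mathcal F_n(\mu)$. I expect this step to be the main technical obstacle, since it involves a family of elliptic problems on varying domains; the uniform bound of Lemma~\ref{lemma:u-est} and the fact that $n$ is fixed (so the geometry perturbs in a controlled way) are the crucial ingredients.

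\textbf{Existence of a minimizer.} The admissible set $\mathcal A(\mu^0):=\{\mu\in X_n:\, d(\mu,\mu^0)<+\infty\}$ is nonempty, since $\mu^0\in\mathcal A(\mu^0)$ with $d(\mu^0,\mu^0)=0$. Because $d(\mu,\mu^0)<+\infty$ forces $(\pi_2)_\#\mu=(\pi_2)_\#\mu^0$, the vertical coordinates of the atoms of $\mu$ must coincide (as a multiset) with those of $\mu^0$; thus $\mathcal A(\mu^0)$ is parametrized by $n$ horizontal coordinates in the projection of $\mathcal R$, subject to the closed separation constraint. In particular, $\mathcal A(\mu^0)$ is compact with respect to narrow convergence. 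On this compact set $\mathcal F_n$ is continuous by the first part, $d(\cdot,\mu^0)$ is lower semicontinuous by Lemma~\ref{lemma:props-d}\,(\ref{lemma:props-d:2}), and $\mu\mapsto\int_\Omega f\,d\mu$ is continuous since $f\in C(\Omega)$. A minimizing sequence therefore admits a narrowly convergent subsequence, and the direct method yields a minimizer of \eqref{m_p}.
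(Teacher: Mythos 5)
Your proposal is correct and follows essentially the same route as the paper: extract convergent dislocation positions, use Lemma~\ref{lemma:u-est} to get a uniformly bounded $H^1$-extension of $u_{\mu^k}$, identify the weak limit with $u_\mu$ via the minimality characterization and pointwise convergence of $I_{n,\mu^k}$, then pass to the limit in the two terms of~\eqref{renorm:F}; for existence you use continuity of $\mathcal F_n$, lower semicontinuity of $d$, and continuity of the loading term on a narrowly convergent minimizing sequence. The only cosmetic difference is that you restrict attention to the set $\mathcal A(\mu^0)$ where $d(\cdot,\mu^0)$ is finite before invoking compactness, whereas the paper simply extracts a convergent subsequence in $X_n$ and lets lower semicontinuity handle the dissipation term; both are equivalent here.
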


\begin{proof}
Since $\mu^k\rightharpoonup\mu$, we can choose a numbering of the $n$ dislocation points $z_i^k$ of $\mu^k$ and $z_i$ of $\mu$ such that $z_i^k\to z_i$. By Lemma~\ref{lemma:u-est}, the extended $\tilde u_{\mu^k}$ are uniformly bounded in $H^1(\Omega;\R^2)$, and a subsequence (not relabeled) converges weakly to some $u\in H^1(\Omega;\R^2)$, as $k\to\infty$. The trace of $\tilde u_{\mu^k}$ converges strongly in 
$L^2(\partial\Omega;\R^2)$, as does the trace of $\tilde u_{\mu^k}$ on the moving boundaries $\partial B_{\e_n}(z_i^k)$ (indeed, this follows from the convergence $z_i^k\to z_i$, and the boundedness in $H^1(B_{\e_n}(0);\R^2)$ of translates $x\mapsto \tilde u_{\mu^k}(x-z_i^k)$). 
From the inequalities
$$
I_{n,\mu}(u)\leq \liminf_{k\to\infty} I_{n,\mu^k}(u_{\mu^k})
\leq \liminf_{k\to\infty}I_{n,\mu^k}(v)= I_{n,\mu}(v)
 \qquad
\text{for all } v\in H^1(\Omega;\R^2),
$$
it follows that $u = u_\mu$ on $\Omega_n(\mu)$ and that the whole sequence converges.  

To prove the continuity of $\mathcal F_n$, consider the decomposition in~\eqref{renorm:F} for $\mathcal F_n(\mu^k)$. The first term converges as $k\to\infty$ since $K^n(\cdot;z_i^k)$ is uniformly bounded on $\Omega_n(\mu^k)$. For the second term in~\eqref{renorm:F} we use the strong convergence of $u_{\mu_k}$ on the boundaries $\partial\Omega(\mu^k)$.

Finally, to prove the existence of a minimizer in~\eqref{m_p}, let $(\mu^k) \subset X_n$ be a minimising sequence, and assume without loss of generality that $\mu^k \weakto \mu$ narrowly and $u_{\mu^k}\weakto u_\mu$ in $H^1(\Omega;\R^2)$. Moreover, because of the separation condition in $X_n$, $\mu\in X_n$.  
The forcing term clearly converges; by the lower semicontinuity of $d$ (Lemma~\ref{lemma:props-d}) and the continuity of $\mathcal F_n$, 
$$
\liminf_{k\to \infty}\left\{\mathcal F_n(\mu^k)+ d(\mu^k, \mu^0) - \int_\Omega f \,d\mu^k \right\} \geq \mathcal F_n(\mu) + d(\mu, \mu^0) - \int_\Omega f\,d\mu,
$$
which proves the minimality of the limit measure $\mu$ and concludes the proof of the lemma.
\end{proof}

%%%%%%%%%%%%%%%%%%%%%%%%%%%%%%%%%%%%%%

\subsection{Quasi-static evolution}
\label{subsec:existence-finite-n}
In this section we prove the existence of a quasi-static evolution in the sense of Definition~\ref{def:quasistat} for a time-dependent force potential $f\in W^{1,1}(0,T;C(\Omega))$.

\begin{theorem}
\label{th:existence-finite-n}
Let $n\in\N$ and let $\mu_0\in X_n$ satisfy the stability condition
$$
\mathcal F_n(\mu_0) - \int_\Omega f(0) \,d\mu_{0} \leq \mathcal F_n(\nu) + d(\nu, \mu_0) - \int_\Omega f(0) \,d\nu,
$$
for every $\nu\in X_n$. Then there exists a quasi-static evolution $t\mapsto \mu(t)$ from $[0,T]$ into
$\mathcal P(\Omega)$ such that $\mu(0)=\mu_0$. In other words, the following two conditions hold:
\begin{itemize}
\item[(qs1)$_n$] global stability: for every $t\in [0,T]$ we have $\mu(t)\in X_n$ and
\begin{equation}\label{e-stab}
\mathcal{F}_n(\mu(t)) - \int_\Omega f(t)\,d\mu(t) \leq \mathcal{F}_n(\nu) +
d(\nu, \mu(t)) - \int_\Omega f(t)\,d\nu,
\end{equation}
for every $\nu \in X_n$;
\item[(qs2)$_n$] energy balance: the map $s\mapsto \int_\Omega \dot{f}(s)\,d\mu(s)$ is integrable on $[0,T]$
and for every $t\in [0,T]$
\begin{equation}\label{e-bal}
\mathcal{F}_n(\mu(t)) +  \mathcal{D}(\mu,[0,t])- \int_\Omega f(t)\,d\mu(t)
= \mathcal{F}_n(\mu(0))  - \int_\Omega f(0)\,d\mu(0) -
\int_0^{t}\int_\Omega \dot{f}(s)\,d\mu(s)\,ds.
\end{equation}
\end{itemize}
\end{theorem}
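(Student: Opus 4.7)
The proof follows the standard variational time-discretization scheme for rate-independent systems. Fix $k\in\N$, set $\tau_k=T/k$ and $t^k_i=i\tau_k$, and starting from $\mu^k_0:=\mu_0$ define $\mu^k_i\in X_n$ recursively as a minimizer of
\begin{equation*}
\mu\mapsto \widetilde{\mathcal F}_n(\mu,t^k_i)+d(\mu,\mu^k_{i-1}),
\end{equation*}
whose existence is guaranteed by Lemma~\ref{lemma:lscQe-ex}. Since $\mu^k_{i-1}$ is itself admissible with vanishing dissipation, the minimizer inherits the vertical marginal of $\mu^k_{i-1}$ and hence of $\mu_0$, so the slip-plane confinement propagates through the scheme. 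Let $\mu^k(t):=\mu^k_{i-1}$ on $[t^k_{i-1},t^k_i)$ and $\overline\mu^k(t):=\mu^k_i$ on $(t^k_{i-1},t^k_i]$.

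Testing minimality of $\mu^k_i$ against $\mu^k_{i-1}$ yields
\begin{equation*}
\widetilde{\mathcal F}_n(\mu^k_i,t^k_i)+d(\mu^k_i,\mu^k_{i-1})\leq \widetilde{\mathcal F}_n(\mu^k_{i-1},t^k_{i-1})-\int_{t^k_{i-1}}^{t^k_i}\!\!\int_\Omega \dot f(s,x)\,d\mu^k_{i-1}(x)\,ds.
\end{equation*}
Summing in $i$ and using \eqref{lower-bound:calFn} together with $\|f\|_{W^{1,1}(0,T;C(\Omega))}$ produces uniform bounds on $\sum_i d(\mu^k_i,\mu^k_{i-1})$ and thus on $\mathcal{D}(\overline\mu^k,[0,T])$. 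Since $d\geq d_1$ by Lemma~\ref{lemma:props-d}\ref{lemma:props-d:12}, and $d_1$ metrizes narrow convergence on the compact space $\mathcal{P}(\Omega)$, the family $\{\overline\mu^k\}$ has uniformly bounded $d_1$-variation. A Helly-type selection theorem then yields a subsequence (not relabeled) and a limit curve $\mu:[0,T]\to\mathcal{P}(\Omega)$ with $\overline\mu^k(t)\weakto\mu(t)$ for every $t$; the uniform $r_n$-separation ensures $\mu(t)\in X_n$.

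Minimality of $\mu^k_i$ against an arbitrary $\nu\in X_n$ gives $\widetilde{\mathcal F}_n(\mu^k_i,t^k_i)\leq \widetilde{\mathcal F}_n(\nu,t^k_i)+d(\nu,\mu^k_i)$. For each fixed $t$, applying this at $t^k_{i(k)}\to t$ and passing to the limit via the continuity of $\mathcal F_n$ under narrow convergence (Lemma~\ref{lemma:lscQe-ex}), the lower semicontinuity of $d$ (Lemma~\ref{lemma:props-d}\ref{lemma:props-d:2}), and the continuity of $t\mapsto f(t)$ in $C(\Omega)$, produces the stability \eqref{e-stab}. The $\leq$ direction of \eqref{e-bal} then follows from summing the discrete energy inequality above, using dominated convergence to pass to the limit in the Riemann sum $\int_0^t\!\int_\Omega\dot f\,d\mu^k_-\,ds$, continuity of $\mathcal F_n$ for the energy contributions, and the lower semicontinuity of $\mathcal{D}$ (Lemma~\ref{lemma:props-d}\ref{lemma:props-d:3}).

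The reverse inequality $\geq$ in \eqref{e-bal} is recovered \emph{from} the limit stability: for any partition $0=s_0<\cdots<s_M=t$, testing \eqref{e-stab} at $s_j$ with $\nu=\mu(s_{j-1})$ yields
\begin{equation*}
\widetilde{\mathcal F}_n(\mu(s_j),s_j)-\widetilde{\mathcal F}_n(\mu(s_{j-1}),s_{j-1})\leq d(\mu(s_{j-1}),\mu(s_j))-\int_{s_{j-1}}^{s_j}\!\!\int_\Omega \dot f\,d\mu(s_{j-1})\,ds;
\end{equation*}
summing over $j$, bounding $\sum_j d(\mu(s_{j-1}),\mu(s_j))\leq \mathcal{D}(\mu,[0,t])$, and refining the partition — using the uniform continuity of $s\mapsto f(s)$ in $C(\Omega)$ to justify the passage from Riemann sum to integral — delivers the desired bound. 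The main obstacle is the coordinated use of two semicontinuity properties of opposite sign: the $\leq$ inequality needs lower semicontinuity of both $\mathcal F_n$ (actually continuity here, from Lemma~\ref{lemma:lscQe-ex}, which is specific to fixed $n$) \emph{and} of $\mathcal{D}$, while the $\geq$ inequality relies solely on the limit stability transforming jumps into dissipation. A secondary subtle point is ensuring that the vertical-marginal constraint encoded in $d<\infty$ is inherited by the limit curve; this is automatic because each discrete increment preserves the vertical marginal of $\mu_0$, and this property is closed under pointwise narrow convergence at fixed $n$.
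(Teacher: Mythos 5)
The time-discretization/minimizing-movements scheme you use is exactly the paper's approach (which in turn cites Mainik--Mielke): define incremental minimizers through Lemma~\ref{lemma:lscQe-ex}, derive a discrete energy inequality, establish the a priori bound on the dissipation, pass to the limit via the Helly-type Theorem~\ref{thm:Helly}, deduce limit stability from the discrete stability using continuity of $\mathcal F_n$ (Lemma~\ref{lemma:lscQe-ex}) and lower semicontinuity of $d$, and close the energy balance by combining the upper bound from the scheme with a lower bound coming from limit stability. Up to and including the upper bound in~\eqref{e-bal}, your argument agrees with the paper's.

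There is however a genuine error in your derivation of the reverse inequality ``$\geq$'' in~\eqref{e-bal}. You test the stability~\eqref{e-stab} at time $s_j$ with competitor $\nu=\mu(s_{j-1})$, obtaining (correctly)
\begin{equation*}
\widetilde{\mathcal F}_n(\mu(s_j),s_j)-\widetilde{\mathcal F}_n(\mu(s_{j-1}),s_{j-1})\ \leq\ d(\mu(s_{j-1}),\mu(s_j))-\int_{s_{j-1}}^{s_j}\!\!\int_\Omega \dot f\,d\mu(s_{j-1})\,ds.
\end{equation*}
Summing in $j$, bounding by $\mathcal D$, and refining yields
\begin{equation*}
\widetilde{\mathcal F}_n(\mu(t),t)-\mathcal D(\mu,[0,t])+\int_0^t\!\!\int_\Omega\dot f\,d\mu(s)\,ds \ \leq\ \widetilde{\mathcal F}_n(\mu(0),0),
\end{equation*}
which has the wrong sign in front of $\mathcal D$ and points the wrong way: it is weaker than the already-established upper bound and is trivially implied by it together with $\mathcal D\geq 0$. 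What is needed is
\begin{equation*}
\widetilde{\mathcal F}_n(\mu(t),t)+\mathcal D(\mu,[0,t])+\int_0^t\!\!\int_\Omega\dot f\,d\mu(s)\,ds \ \geq\ \widetilde{\mathcal F}_n(\mu(0),0).
\end{equation*}
To obtain this you should test stability at the \emph{earlier} time $s_{j-1}$ with competitor $\nu=\mu(s_j)$; that gives
\begin{equation*}
\widetilde{\mathcal F}_n(\mu(s_{j-1}),s_{j-1})-\widetilde{\mathcal F}_n(\mu(s_j),s_j)\ \leq\ d(\mu(s_j),\mu(s_{j-1}))+\int_{s_{j-1}}^{s_j}\!\!\int_\Omega\dot f\,d\mu(s_j)\,ds,
\end{equation*}
and summing and refining produces the correct lower energy estimate. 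A secondary, more minor point: the passage from the Riemann sum $\sum_j\int_{s_{j-1}}^{s_j}\int_\Omega\dot f\,d\mu(s_{j\pm1})\,ds$ to $\int_0^t\int_\Omega\dot f\,d\mu(s)\,ds$ along a refining sequence of partitions is not justified by the ``uniform continuity of $s\mapsto f(s)$''; the time integral is already exact. What is required is $\int_\Omega\dot f(s)\,d\mu(s_{j\pm1})\to\int_\Omega\dot f(s)\,d\mu(s)$ for a.e.\ $s$, which follows from the narrow one-sided continuity of the limit curve at all but countably many times, itself a consequence of $\mathcal D(\mu,[0,T])<\infty$ (equivalently, the $BV$ regularity of $t\mapsto z_j(t)$ that the paper points out).
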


\begin{remark}\label{rmk:proj}
Note that if $t\mapsto\mu(t)$ is a quasi-static evolution, then
\begin{equation}\label{cmar0}
(\pi_2)_\#\mu(t)=(\pi_2)_\#\mu(0) \quad \text{for every } t\in[0,T].
\end{equation}
Indeed, owing to the regularity assumptions on $f$, 
condition (qs2)$_n$ implies that $\mathcal{D}(\mu,[0,T])<+\infty$, which in turn gives \eqref{cmar0} by the definition of the dissipation distance $d$.
\end{remark}

\begin{proof}[Proof of Theorem~\ref{th:existence-finite-n}]
The theorem is a direct application of \cite[Theorem~4.5]{MainikMielke}.
We sketch the proof for the reader's convenience.

For every $k\in\N$ let $(t_k^i)_{i=0,\dots,k}$ be a partition of the interval $[0,T]$ such that
$$
0=t_k^0 < t_k^1 < \dots <t_k^k=T \quad \text{and}\quad \max_{i=1,\dots,k}|t_k^i - t_k^{i-1}|\to 0, \text{ as }k\to \infty.
$$
For every $k\in\N$ we set $\mu_k^0:=\mu_0$ and for $i\geq 1$ we define $\mu_k^i$ as a solution of the following minimisation problem (well-defined by Lemma~\ref{lemma:lscQe-ex}):
$$
\min_{\nu \in X_n} \Big\{ \mathcal F_n(\nu) +  d(\nu, \mu_k^{i-1}) - \int_\Omega f(t^i_k) \, d\nu \Big\}.
$$
Since $\mu_k^{i-1}$ is a competitor for this problem with finite energy, the minimum is finite. This implies that
$d(\mu_k^i, \mu_k^{i-1})<+\infty$, so that $(\pi_2)_\#\mu_k^i=(\pi_2)_\#\mu_k^{i-1}$ for every $i$ and $k$. In other words,
$(\pi_2)_\#\mu_k^i=(\pi_2)_\#\mu_0$
for every $i$ and $k$.
By the minimality of $\mu_k^i$ and the triangle inequality
we infer that
\begin{equation}\label{sqss}
\mathcal F_n(\mu_k^i) - \int_{\Omega}f(t^i_k) \,d\mu_k^i \leq \mathcal F_n(\nu) + d(\nu, \mu_k^{i}) - \int_{\Omega}f(t^i_k) \,d\nu
\end{equation}
for every $\nu\in X_n$.
Moreover, using again the minimality, the following discrete energy inequality can be proved:
for all $1\leq j\leq k$
\begin{equation}\label{e:minimality}
\mathcal F_n(\mu_k^j) + \sum_{i=1}^jd(\mu_k^i, \mu_k^{i-1}) \leq \mathcal F_n(\mu_0) + \sum_{i=1}^j\int_{\Omega}f(t^i_k) \, d(\mu_k^i - \mu_k^{i-1}).
\end{equation}
Now we let $\mu_k(t)$ be the piecewise constant right-continuous interpolation of $(\mu_k^i)_{i=1,\dots,k}$. In particular, we have
$(\pi_2)_\#\mu_k(t)=(\pi_2)_\#\mu_0$ for every $t\in[0,T]$ and every $k$.

By the definition \eqref{def:curly-F-intro} it is clear that the functional $\mathcal F_n$ is bounded from below by a fixed constant (since $F_n$ is nonnegative, while the self-energy is bounded by $C(n\e_n)^{-1}$). Using this fact and \eqref{e:minimality} we deduce that
$\mathcal{D}(\mu_k, [0,T])\leq C$
for every $k$. Therefore, by Theorem~\ref{thm:Helly} in the Appendix (or by applying the standard Helly's Theorem directly to the sequences (in $k$) of maps $t\mapsto z^k_j(t)$, $j=1,\dots,n$, where $\mu_k(t)=\frac1n\sum_{j=1}^n\delta_{z^k_j(t)}$)
there exist a subsequence (not relabelled) and a function $t\mapsto \mu(t)$ from $[0,T]$ into $\mathcal P(\Omega)$,
with $\mathcal{D}(\mu, [0,T])<+\infty$, such that
$\mu_k(t)\weakto \mu(t)$ narrowly, as $k\to+\infty$, for every $t\in[0,T]$. It is also easy to check that $\mu(t)\in X_n$ for every $t\in[0,T]$ and
$(\pi_2)_\#\mu(t)=(\pi_2)_\#\mu_0$ for every $t\in[0,T]$.

Condition (qs1)$_n$ follows now by passing to the limit in \eqref{sqss}. This is possible since $\mathcal F_n$ and the force term are continuous with respect to narrow convergence, and convergence in $d$ is equivalent to convergence of the dislocation points for sequences of measures in $X_n$.

To prove condition (qs2)$_n$, we can pass to the limit in \eqref{e:minimality} and deduce an energy inequality.
The converse inequality can be proved using the global stability (qs1)$_n$ and an approximation of $t\mapsto\mu(t)$
in terms of piecewise constant maps. This is possible since the condition $\mathcal{D}(\mu, [0,T])<+\infty$ is equivalent to saying that the maps 
$t\mapsto z_j(t)$ are $BV$ functions from $[0,T]$ into $\R^2$ for every $j=1,\dots,n$, where $\mu(t)=\frac1n\sum_{j=1}^n\delta_{z_j(t)}$.
Therefore, in particular, $t\mapsto z_j(t)$ is continuous on $[0,T]$, except 
possibly on a countable set of times. 
\end{proof}

%%%%%%%%%%%%%%%%%%%%%%%%%%%%%%%%%%%%%%%%%%%%%%%%%%%%
%%%%%%%%%%%%%%%%%%%%%%%%%%%%%%%%%%%%%%%%%%%%%%%%%%%%
%%%%%%%%%%%%%%%%%%%%%%%%%%%%%%%%%%%%%%%%%%%%%%%%%%%%
\section{Convergence of the evolutions} \label{conv:evolution}

We now prove the convergence of the evolutions associated to $\mathcal{F}_n$ to an evolution associated to the $\Gamma$-limit functional~$\mathcal{F}$, i.e.,  Theorem~\ref{th:plasticlimit} in the introduction (as Theorem~\ref{th:conv_evolutions} below).
We will consider evolutions for $\mathcal{F}_n$ with initial values $\mu_n^0$ belonging to a subset $Y_n(\gamma,c)$ of the admissible class $X_n$, defined as follows.

\begin{defin}\label{def:classYng}
Let $n\in\N$ and let $\mu = \frac1n\sum_{i=1}^{n}\,\delta_{z_i}$, for some $z_i\in \Omega$. Let $S_\mu$ be the set of slip planes of $\mu$, and for $s\in S_\mu$ let $m_{\mu,s}$  be the number of dislocations on the slip plane $s$, i.e.,
\begin{equation}\label{classS}
S_\mu:= \supp\, [(\pi_2)_{\#}\mu], \qquad m_{\mu,s}:=\#\big\{j:\ \pi_2(z_j)=s\big\}.
\end{equation}
For $-\frac12<\gamma\leq\frac12$ and $c>0$ we introduce the following class of measures:
\begin{equation}\label{def:Yng}
Y_n(\gamma,c) := \Bigl\{ \mu \in X_n:\ \min_{s,s'\in S_\mu, s\neq s'}|s-s'|\geq cn^{-\frac12+\gamma}, \ \max_{s\in S_\mu} m_{\mu,s}\leq \frac1c\,n^{\frac12+\gamma}\Bigr\}.
\end{equation}
\end{defin}%
In words, measures in $Y_n(\gamma,c)$ have slip-plane spacing larger than $cn^{-\frac12 +\gamma}$ and no more than $c^{-1}n^{\frac12+\gamma}$ dislocations per slip plane.
Note that the two properties required in the definition of the class $Y_n(\gamma,c)$ depend only on the vertical marginal of a measure; therefore if $\mu\in Y_n(\gamma,c)$ for some $\gamma$ and $c$ and $\nu\in X_n$ is such that
$(\pi_2)_\#\nu=(\pi_2)_\#\mu$, then $\nu\in Y_n(\gamma,c)$.\medskip

Although the class $Y_n(\gamma,c)$ imposes restrictions on the structure of admissible measures, we can nevertheless recover a large class of measures in the limit, as shown in the following lemma. 

\begin{lemma}\label{lemma:char-Pinfty}
Assume \eqref{hyp:enrn} and
let $-\frac12<\gamma\leq\frac12$ and $c>0$. Let $\mathcal{P}_{\gamma,c}^\infty(\Omega)$ be the class of measures $\mu\in\mathcal P(\Omega)$ such that there exists a sequence $\mu_n\weakto\mu$ narrowly, as $n\to\infty$, with $\mu_n\in Y_n(\gamma,c)$ for every $n$. 
\begin{itemize}
\item If $-\frac12<\gamma<\frac12$, then
\begin{equation}\label{<12}
\mathcal{P}_{\gamma,c}^\infty(\Omega)=\Big\{ \mu\in \mathcal{P}(\Omega): \ \supp\,\mu \subset \mathcal R, \
(\pi_2)_{\#}\mu\leq \frac{1}{c^2} \mathcal{L}^1 \Big\}.
\end{equation}
\item If $\gamma=\frac12$, then $\mathcal{P}_{\gamma,c}^\infty(\Omega)$ is the set of all measures $\mu \in \mathcal{P}(\Omega)$ such that
$\supp\,\mu \subset \mathcal R$,
$S_\mu:=\supp\,[(\pi_2)_{\#}\mu]$ is finite, 
\begin{equation}\label{12a}
\min\big\{|s-s'|: s,s'\in S_\mu,\ s\neq s'\big\}\geq c ,
\end{equation}
and
\begin{equation}\label{12b}
\mu(\Omega\cap(\R\times\{s\})\big) \leq \frac{1}{c} \quad\text{ for every }s\in S_\mu .
\end{equation}
\end{itemize}
\end{lemma}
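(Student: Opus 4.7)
The plan is to treat the cases $-\tfrac12<\gamma<\tfrac12$ and $\gamma=\tfrac12$ separately and, within each, to prove the necessity (closure of $\mathcal{P}_{\gamma,c}^\infty(\Omega)$ under narrow limits) and the sufficiency (construction of an admissible approximating sequence). In every case the inclusion $\supp\mu\subset\mathcal R$ is immediate from $\supp\mu_n\subset\mathcal R$, so the content lies in the structure of the vertical marginal.

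\textbf{Necessity.} For a sequence $\mu_n\weakto\mu$ with $\mu_n\in Y_n(\gamma,c)$ and any bounded interval $I\subset\R$, the slip-plane-spacing constraint puts at most $|I|/(cn^{-1/2+\gamma})+1$ slip planes of $\mu_n$ inside $I$, each carrying mass at most $c^{-1}n^{-1/2+\gamma}$, so
\[
(\pi_2)_\#\mu_n(I)\leq |I|/c^2+c^{-1}n^{-1/2+\gamma}.
\]
When $\gamma<\tfrac12$, restricting to $\mu$-continuity intervals and letting $n\to\infty$ gives $(\pi_2)_\#\mu(I)\leq |I|/c^2$, hence $(\pi_2)_\#\mu\leq c^{-2}\mathcal L^1$. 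When $\gamma=\tfrac12$, the spacing becomes the uniform constant $c$, so each $\mu_n$ has at most $N:=\lfloor|\pi_2(\mathcal R)|/c\rfloor+1$ slip planes; a diagonal extraction stabilizes the number $N^*\leq N$, the slip-plane positions $s_i^n\to s_i^*$ (still with spacing $\geq c$) and the masses $p_i^n\to p_i^*\leq c^{-1}$, yielding exactly \eqref{12a}--\eqref{12b}.

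\textbf{Sufficiency.} For $\gamma<\tfrac12$ I would partition the vertical range of $\mathcal R$ into intervals $I_j^n$ of length $\delta_n:=cn^{-1/2+\gamma}$ with midpoints $s_j^n$, and set $p_j^n:=\mu(\R\times I_j^n)$, $m_j^n:=\lfloor np_j^n\rfloor$; a single bin can be adjusted so that $\sum_jm_j^n=n$ (the shortfall, of order $n^{1/2-\gamma}=o(n)$, is harmless for narrow convergence). On each slip plane $\{x_2=s_j^n\}$ place $m_j^n$ points approximating the conditional horizontal distribution of $\mu$ on $\R\times I_j^n$. Then $m_j^n\leq np_j^n\leq c^{-2}\delta_n n=c^{-1}n^{1/2+\gamma}$ gives the per-plane occupancy bound, and $m_j^n r_n\leq c^{-1}nr_n\to0$ (by \eqref{hyp:enrn}) makes the horizontal separation $r_n$ realizable within the horizontal extent of $\mathcal R$. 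The case $\gamma=\tfrac12$ is analogous but finite-dimensional: given $\mu$ with slip planes $s_1,\dots,s_N$ and masses $p_i\leq 1/c$, set $m_i^n:=\lfloor np_i\rfloor$, correct by at most $N$ units to reach total $n$, and place $m_i^n$ points on $\{x_2=s_i\}$ approximating $\mu|_{\R\times\{s_i\}}$; here $m_i^n\leq n/c$ and $m_i^n r_n\leq c^{-1}nr_n\to 0$ verify admissibility. In both regimes, $\mu_n\weakto\mu$ follows by testing against uniformly continuous $\phi\in C(\overline\Omega)$, replacing $\phi(\cdot,x_2)$ by $\phi(\cdot,s_j^n)$ on each strip with error $o(1)$ since $\delta_n\to 0$.

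\textbf{Main obstacle.} The delicate point is the sufficiency for $\gamma<\tfrac12$: the vertical partition $\delta_n$ must be chosen \emph{simultaneously} coarse enough ($\geq cn^{-1/2+\gamma}$) to satisfy the slip-plane-spacing constraint and fine enough ($\delta_n\to 0$) to recover a general absolutely continuous vertical marginal in the narrow limit, while horizontally one must fit up to $c^{-1}n^{1/2+\gamma}$ dislocations per plane with mutual separation $r_n$. The compatibility of these scales rests on the single hypothesis $nr_n\to 0$ from \eqref{hyp:enrn}, together with the density bound $(\pi_2)_\#\mu\leq c^{-2}\mathcal L^1$.
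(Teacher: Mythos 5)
Your necessity argument matches the paper's: the key counting bound from the definition of $Y_n(\gamma,c)$, combined with the portmanteau inequality for open sets (or, equivalently, $\mu$-continuity intervals), gives both the absolute-continuity bound for $\gamma<\tfrac12$ and, via your diagonal extraction, the structure of $S_\mu$ for $\gamma=\tfrac12$. For sufficiency, however, you take a genuinely different constructive route: you place slip planes directly at the critical spacing $\delta_n=cn^{-1/2+\gamma}$ (one slip plane per horizontal strip of height $\delta_n$) and allocate $\lfloor np_j^n\rfloor$ dislocations on each, whereas the paper introduces an intermediate mesoscopic scale $h_n$ with $h_n\to 0$, $n^{1/2-\gamma}h_n\to\infty$, $n^{1/2+\gamma}h_n\to\infty$, tiles $\Omega$ by squares of side $h_n$ (each containing $\approx h_n n^{1/2-\gamma}/c$ slip planes), and places $\lfloor \mu(Q_i^n)cn^{1/2+\gamma}/h_n\rfloor$ equispaced dislocations on each slip plane inside the square. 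The paper's extra scale buys a transparent narrow-convergence estimate (test functions are effectively frozen on squares of side $h_n$), while your version is leaner but pushes the burden onto a careful discretization of the conditional horizontal marginals, which you leave at the level of ``approximate''. Both approaches work, and the scaling arithmetic ($m_j^n\leq c^{-1}n^{1/2+\gamma}$, slip-plane spacing $\geq cn^{-1/2+\gamma}$, horizontal separation $\gg r_n$ thanks to $nr_n\to 0$) is the same.

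One concrete flaw: you claim the rounding shortfall $\sum_j(np_j^n-\lfloor np_j^n\rfloor)$, of order $n^{1/2-\gamma}$, ``can be adjusted in a single bin''. For $\gamma\leq 0$ this fails, since $n^{1/2-\gamma}\geq n^{1/2}\geq n^{1/2+\gamma}$, so the shortfall exceeds the per-slip-plane cap $c^{-1}n^{1/2+\gamma}$. It must instead be spread across the active slip planes (e.g.\ one extra dislocation per activated plane), exactly as the paper does. A second, smaller omission: when $\supp\mu$ touches $\partial\mathcal R$ your strip midpoints $s_j^n$ (or the perturbed points) may fall slightly outside $\mathcal R$; the paper handles this by first assuming $\supp\mu\subset\mathrm{int}\,\mathcal R$ and then reducing to this case via push-forward by a $1/k$-retraction plus a diagonal argument, and you should do the same.
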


\begin{remark}
We note that for $-\frac12<\gamma<\frac12$ the class $\mathcal{P}_{\gamma,c}^\infty(\Omega)$ is nonempty if and only if 
$c^2\leq \mathcal L^1(\pi_2(\Omega))$. This follows from the second condition in \eqref{<12} and the fact that $\mu$ is a probability measure. Analogously, for $\gamma=\frac12$ the class $\mathcal{P}_{\gamma,c}^\infty(\Omega)$ is nonempty if and only if 
$c^2\leq \max\{ c, \mathcal L^1(\pi_2(\Omega))\}$. This follows from \eqref{12b} and the fact that $\mu$ is a probability measure,
taking into account that $\#S_\mu\leq  \max\{ 1, \mathcal L^1(\pi_2(\Omega))/c\}$ by \eqref{12a}.
\end{remark}

\begin{proof}[Proof of Lemma~\ref{lemma:char-Pinfty}]
In what follows all the measures that appear are extended to zero outside~$\Omega$.
Let $\mu \in \mathcal{P}_{\gamma,c}^{\infty}(\Omega)$ and let $\mu_n\weakto\mu$ narrowly, as $n\to\infty$, with $\mu_n\in Y_n(\gamma,c)$ for every $n$. Clearly $\supp\,\mu \subset \mathcal R$. Let $I_r$ be an open interval of length $r>0$. 

If $-\frac12<\gamma<\frac12$, then the maximum number of dislocations of $\mu_n$ contained in the horizontal strip $\R\times I_r$ is $rn/c^2$. Therefore,
$$
(\pi_2)_\#\mu(I_r)=\mu(\R\times I_r)\leq \liminf_{n\to\infty} \mu_n(\R\times I_r) \leq \frac{1}{c^2}\mathcal L^1(I_r).
$$
This inequality implies that $(\pi_2)_\#\mu\leq \frac{1}{c^2}\mathcal L^1$ on every open set of $\R$ and thus, by approximation
on every measurable set. 

If $\gamma=\frac12$, then the maximum number of dislocations of $\mu_n$ in $\R\times I_r$ is $\max\{n/c, rn/c^2\}$. This implies~\eqref{12b}, since 
$S_\mu:=\supp\,(\pi_2)_{\#}\mu$ clearly has finite cardinality and satisfies \eqref{12a}.\smallskip

Let now $-\frac12<\gamma<\frac12$ and let $\mu \in \mathcal{P}(\Omega)$ be such that $\supp\,\mu \subset \mathcal R$ and $(\pi_2)_\#\mu\leq \frac{1}{c^2}\mathcal L^1$. In particular, for every interval $I_r$ of length $r>0$ we have
\begin{equation}\label{mudensity}
\mu(\R\times I_r)\leq \frac{r}{c^2}.
\end{equation}
As a first step we assume that $\supp\,\mu \subset {\rm int}\,{\mathcal R}$, where ${\rm int}\,{\mathcal R}$ denotes the interior of $\mathcal R$. 

For every $n\in\N$ we consider a covering $\{Q_i^n\}$ of $\Omega$ made of a grid of squares of side $h_n$ with $h_n\to0$ chosen such that 
\begin{equation}\label{hyphn}
n^{\frac12-\gamma}h_n\to\infty, \quad n^{\frac12+\gamma}h_n\to\infty, \quad \textrm{as } n\to \infty.
\end{equation}
We construct the approximating measures $\mu_n$ as follows. Let us consider the straight lines $\ell_j^n:=\R\times\{cjn^{-\frac12+\gamma}\}$, $j\in\Z$.
For every $i$ and $j$ such that $Q_i^n\cap\ell_j^n\neq\emptyset$ we allocate $m_i^n$ equidistant dislocations on $Q_i^n\cap\ell_j^n$ at a distance $r_n$ from $\partial Q_i^{n}\cap\ell_j^n$, where
$$
m_i^n:=\Big\lfloor \frac{\mu(Q_i^n)}{h_n}cn^{\frac12+\gamma}\Big\rfloor.
$$
Note that $h_n/r_n\to+\infty$, as $n\to\infty$, by \eqref{hyp:enrn} and \eqref{hyphn}. Moreover, by
\eqref{mudensity} the distance between two dislocations in $Q_i^n$ and on the same line is larger than
$$
\frac{h_n-2r_n}{m_i^n}\geq \frac1{2c}\frac{h_n^2}{\mu(Q_i^n)n^{\frac12+\gamma}}
\geq \frac{c}{2}\frac{h_n}{n^{\frac12+\gamma}} \gg r_n,
$$
where the last inequality is satisfied in view of \eqref{hyp:enrn} and \eqref{hyphn}.

Let $I_j^n$ be the set of indices $i$ for which $Q_i^n$ intersects the line $\ell_j^n$. Then the total number of dislocations on $\ell_j^n$ can be estimated by
$$
\sum_{i\in I_j^n} m_i^n\leq \sum_{i\in I_j^n}\frac{\mu(Q_i^n)}{h_n}cn^{\frac12+\gamma} \leq \frac{\mu(S_j^n)}{h_n}cn^{\frac12+\gamma}\leq 
 \frac{1}{c}n^{\frac12+\gamma},
$$
where $S_j^n:=\cup_{i\in I_j^n}Q_i^n$ is a strip of height $h_n$ and the last inequality follows from \eqref{mudensity}.
Since $\supp\,\mu_n \subset {\mathcal R}$ for $n$ large enough, we conclude that $\mu_n\in Y_n(\gamma,c)$ 
(if $\mu_n(\Omega)<1$, we can add the dislocations needed in order for $\mu_n$ to be a probability measure
in the activated slip planes, at distance larger than $r_n$ from other dislocations).

We now check that $\mu_n\rightharpoonup\mu$ narrowly. Note that the number of lines $\ell_j^n$ intersecting a square $Q_i^n$ is 
$\lfloor h_nn^{\frac12-\gamma}/c\rfloor$. Therefore, the total number of dislocations on each square $Q_i^n$ is
$$
p_i^n:=\Big\lfloor\frac{h_n n^{\frac12-\gamma}}{c}\Big\rfloor m_i^n.
$$
Denoting by $\{z_j^{i,n}\}_{j=1,\dots,p_i^n}$ the dislocations of the measures $\mu_n$ in $Q_i^n$, we may write
$$
\mu_n=\frac1n\sum_i \sum_{j=1}^{p_i^n} \delta_{z_j^{i,n}}.
$$
Let now $\phi\in C(\Omega)$. We have
\begin{eqnarray*}
\int_\Omega \phi\,d\mu_n & = & \frac1n\sum_i \sum_{j=1}^{p_i^n} \phi(z_j^{i,n}) 
\\
& = &  \frac1n\sum_i p_i^n \phi(z_1^{i,n})
+ \frac1n\sum_i \sum_{j=1}^{p_i^n} \big(\phi(z_j^{i,n})-\phi(z_1^{i,n})\big),
\end{eqnarray*}
where the last term tends to $0$, as $n\to\infty$, by the uniform continuity of $\phi$ and the fact that $|z_j^{i,n}-z_1^{i,n}|\leq\sqrt2h_n\to0$.
Moreover,
$$
\frac1n\sum_i p_i^n \phi(z_1^{i,n}) =
\frac1n \sum_i \big(p_i^n- n\mu(Q_i^n)\big)  \phi(z_1^{i,n}) 
+ \sum_i \int_{Q_i^n} ( \phi(z_1^{i,n})-\phi )\, d\mu
+ \int_\Omega \phi\, d\mu,
$$
where again by the absolute continuity of $\phi$ the second term on the right-hand side tends to $0$, as $n\to\infty$.
To conclude, it is enough to observe that
\begin{eqnarray*}
\frac1n \sum_i \Big|\big(p_i^n- n\mu(Q_i^n)\big)  \phi(z_1^{i,n}) \Big|
& \leq & \frac1n \|\phi\|_\infty \sum_i\Big( \frac{h_nn^{\frac12-\gamma}}{c} +\frac{\mu(Q_i^n)}{h_n}c n^{\frac12+\gamma}-1\Big)
\\
& \leq & C\Big( \frac{1}{n^{\frac12-\gamma}h_n}+\frac{1}{n^{\frac12+\gamma}h_n} +\frac{1}{nh_n^2} \Big)
\end{eqnarray*}
which goes to $0$ by \eqref{hyphn}. In the last inequality we have used the fact that the number of squares $Q_i^n$ covering $\Omega$ is of order $1/h_n^2$.\smallskip

In the general case, one can proceed as follows. 

For every $k\in\N$ let $\mu^k$ be a push-forward  of $\mu$ by a $1/k$-retraction of its support; clearly $\supp\,\mu^k \subset {\rm int}\,{\mathcal R}$. 
Then, we can construct an approximation $(\mu^k_n)$ of $\mu^k$ as above. In particular, for every $k\in\N$ there exists $N^k\in\N$ such that
$d(\mu_n^k,\mu^k)<1/k$ for every $n\geq N^k$ and we can assume that $N_k$ is strictly increasing with respect to $k$. The required approximation $\mu_n$ of $\mu$ is finally given by $\mu_n:= \mu_n^k$
for $N^k\leq n<N^{k+1}$.\smallskip

Assume now $\gamma=\frac12$ and let $\mu \in \mathcal{P}(\Omega)$ be such that
$\supp\,\mu \subset \mathcal R$, $S_\mu:=\supp\,[(\pi_2)_{\#}\mu]$ has finite cardinality, and
\eqref{12a}--\eqref{12b} hold.
In this case, to construct the approximating sequence, we locate the dislocations on the slip planes of $\mu$ (which by assumption are finite in number and spaced at least $c$ from each other) by following a one-dimensional procedure on every slip plane. More precisely, for every $s\in S_\mu$ we subdivide the straight line $\R\times\{s\}$ into a family of segments $\{I_i^{s,n}\}$ of length $h_n$, where $h_n\to 0$ is chosen so that $nh_n\to\infty$ and
$h_n/(nr_n)\to\infty$. In every segment $I_i^{s,n}$ we allocate $m_i^{s,n}$ equispaced dislocations at a distance $r_n$ from $\partial I_i^{s,n}$, where
$$
m_i^{s,n}:=\big\lfloor\mu(I_i^{s,n})n\big\rfloor.
$$
Note that the distance between two dislocations in $I_i^{s,n}$ is larger than
$$
\frac{h_n-2r_n}{m_i^{s,n}}\geq \frac12\frac{h_n}{n\mu(I_i^{s,n})}
\geq C\frac{h_n}{n} \gg r_n,
$$
where the last two inequalities follow from \eqref{12b} and from our choice of $h_n$.
Moreover, again by \eqref{12b} the number of dislocations on every slip plane is less than $n/c$.
Therefore, the sequence $\mu_n$ supported on the dislocations positions defined above is in $Y_n(\gamma,c)$
and similarly as before we can show that it converges narrowly to $\mu$.
\end{proof}

\begin{remark}
We note that the case $\gamma=-\frac12$ is irrelevant for our analysis since the corresponding class of limit measures is contained in the class $\mathcal P^\infty_{\gamma,c}(\Omega)$ with $-\frac12<\gamma<\frac12$, as it can be easily deduced from the proof of Lemma~\ref{lemma:char-Pinfty}. 
\end{remark}

Let now $f\in W^{1,1}(0,T;C(\Omega))$ and let $(\mu^0_n)$ be a sequence of initial data with $\mu^0_n\in Y_n(\gamma,c)$ for every $n\in\N$. By Theorem~\ref{th:existence-finite-n} for every $n$ there exists a quasi-static evolution
$t\mapsto\mu_n(t)$ with initial value $\mu^0_n$ and potential $f$, according to Definition~\ref{def:quasistat}. 
Note that $(\pi_2)_\#\mu_n(t)=(\pi_2)_\#\mu^0_n$ for every $t\in[0,T]$ and every $n$ by Remark~\ref{rmk:proj}.
Therefore, $\mu_n(t)\in Y_n(\gamma,c)$ for every $t\in[0,T]$ and every $n\in\N$.
Now, in order to pass to the limit in (qs1)$_n$ and (qs2)$_n$ we need to construct a so-called joint recovery sequence for $\mathcal{F}_n$ relative to the distance $d$ (as defined in \cite{MielkeRoubicekStefanelli08}). This construction is done in the next theorem and will also be used to guarantee the existence of admissible sequences of initial data (see Remark~\ref{rmk:aid}).

\begin{theorem}\label{thm:joint}
Assume \eqref{hyp:enrn} and fix $-\frac12<\gamma\leq \frac12$ and $c>0$.
Let $\mu\in \mathcal{P}_{\gamma,c}^{\infty}(\Omega)$ be such that $\mathcal{F}(\mu)<+\infty$
and let $\mu_n\weakto\mu$ narrowly, as $n\to \infty$, with
$\mu_n \in Y_n(\gamma,c)$ for every $n$.
Then for every $\nu\in\mathcal P(\Omega)$ with $(\pi_2)_{\#} \mu = (\pi_2)_{\#}\nu$ and $\mathcal{F}(\nu)<+\infty$, there exists a sequence $(\nu_n)$ such that
\begin{itemize}
\smallskip
\item[(i)] $\nu_n\in Y_n(\gamma,c)$ and $(\pi_2)_{\#}\mu_n = (\pi_2)_{\#}\nu_n$ for every $n$;
\medskip
\item[(ii)] $\nu_n \rightharpoonup \nu$ narrowly, as $n\to \infty$;
\medskip
\item[(iii)] $\displaystyle \limsup_{n\to \infty} d(\mu_n,\nu_n) \leq d(\mu,\nu)$;
\medskip
\item[(iv)] $\displaystyle \limsup_{n\to \infty} \mathcal{F}_n(\nu_n) \leq \mathcal{F}(\nu)$.
\smallskip
\end{itemize}
\end{theorem}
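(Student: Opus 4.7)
The plan is a two-scale construction: first coarsen $\nu$ into a piecewise-constant binned version $\nu^h$, then build $\nu_n$ by rearranging the atoms of $\mu_n$ horizontally along its own slip planes so as to approximate $\nu^h$. Since $d(\mu_n,\nu_n)<+\infty$ forces $(\pi_2)_\#\nu_n = (\pi_2)_\#\mu_n$, the atoms of $\nu_n$ must sit on the same slip planes as those of $\mu_n$, with the same multiplicities; because $(\pi_2)_\#\mu = (\pi_2)_\#\nu$, this is compatible with approximating $\nu$ in the limit. I would partition $\mathcal R$ by a grid $\{Q_{k\ell}^h\}$ of cells of side $h$, exactly as in Step~1 of the $\limsup$ proof of Theorem~\ref{ups:density}, and define $\nu^h$ and $\mu^h$ as the corresponding piecewise-constant densities. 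That step gives $\nu^h\weakto\nu$ and $\mathcal F(\nu^h)\to\mathcal F(\nu)$ as $h\to 0$; a parallel convexity argument combined with Lemma~\ref{lemma:props-d}(\ref{lemma:props-d:2}) gives $d(\mu^h,\nu^h)\to d(\mu,\nu)$.

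For fixed $h$ and large $n$, in each horizontal strip $\Sigma_k^h$ let $S_k^n$ denote the set of slip planes of $\mu_n$ lying in $\Sigma_k^h$, with counts $\{m_{\mu_n,s}\}_{s\in S_k^n}$ summing to $M_k^n$. The narrow convergence $\mu_n\weakto\mu$ together with $(\pi_2)_\#\mu=(\pi_2)_\#\nu$ gives $M_k^n/n\to\nu(\Sigma_k^h) = \sum_\ell\nu(Q_{k\ell}^h)$. I would define $\nu_n$ by allocating $a_{s,\ell}^n$ atoms on slip plane $s\in S_k^n$ inside cell $Q_{k\ell}^h$ via the factorized rule $a_{s,\ell}^n\approx m_{\mu_n,s}\,n\nu(Q_{k\ell}^h)/M_k^n$, integer-rounded so that $\sum_\ell a_{s,\ell}^n = m_{\mu_n,s}$ exactly (preserving the vertical marginal) while $\sum_s a_{s,\ell}^n$ approximates $n\nu(Q_{k\ell}^h)$. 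Within each cell, the atoms on each slip plane are placed equispaced; the resulting horizontal spacing is at least of order $h/n^{1/2+\gamma}$ and exceeds $r_n$ provided we choose $h=h_n$ with $r_n n^{1/2+\gamma}\ll h_n\to 0$, a condition compatible with \eqref{hyp:enrn}. This construction keeps $\nu_n\in Y_n(\gamma,c)$.

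For (iii), a slip-plane-confined transport plan between $\mu_n$ and $\nu_n$ built cell by cell matches horizontal discrete profiles up to within-cell displacement of order $h$, yielding $\limsup_n d(\mu_n,\nu_n)\leq d(\mu^h,\nu^h)\to d(\mu,\nu)$. Property (ii) then follows from (iii) together with $\nu^h\weakto\nu$. For (iv), within each cell the atoms of $\nu_n$ form a rectangular grid-like pattern of horizontal spacing of order $h/n^{1/2+\gamma}$ and vertical spacing at least $cn^{-1/2+\gamma}$; a direct adaptation of the annular-counting estimate in Step~3 of the $\limsup$ proof of Theorem~\ref{ups:density} (relying on the logarithmic bound \eqref{imp-est} on $V$) yields $\limsup_n\mathcal F_n(\nu_n)\leq \mathcal F(\nu^h)$, and then $\mathcal F(\nu^h)\to\mathcal F(\nu)$ by the same proof. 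A diagonal argument in $(n,h_n)$ finally produces $\nu_n$ satisfying (i)--(iv).

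The principal obstacle is the combinatorial assignment $a_{s,\ell}^n$: we must simultaneously match the horizontal cell-targets $n\nu(Q_{k\ell}^h)$ and the fixed slip-plane counts $m_{\mu_n,s}$, while ensuring pairwise distance at least $r_n$. The unrounded rational assignment $m_{\mu_n,s}\,n\nu(Q_{k\ell}^h)/M_k^n$ satisfies both marginals exactly; integer rounding introduces at most one misplaced atom per $(s,\ell)$-pair. Using the $Y_n(\gamma,c)$-bound $\#S_k^n\leq Ch_n n^{1/2-\gamma}$ and the count of $O(1/h_n)$ cells per strip, the total number of misplaced atoms is $O(n^{1/2-\gamma}/h_n)$, which normalized by $1/n$ vanishes provided $h_n n^{1/2+\gamma}\to\infty$, a condition compatible with $h_n\gg r_n n^{1/2+\gamma}$. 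A secondary subtlety is that the cases $\gamma=1/2$ (finitely many atomic slip planes) and $\gamma<1/2$ (densely filling slip planes) are structurally quite different, yet the same binning construction handles both once $h_n$ is tuned to be coarse enough that each strip contains enough slip planes of $\mu_n$ to realize the cell-wise targets.
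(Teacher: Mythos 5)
Your high-level strategy — rearranging the horizontal positions of $\mu_n$'s atoms while keeping the slip planes fixed, with proportional allocation within cells and careful parameter tuning — is close in spirit to the paper's, and your energy estimate for~(iv) and the rounding-error count are sound. However, there is a genuine gap in the argument for~(iii). You place $\nu_n$'s atoms equispaced within each cell $Q_{k\ell}^h$, so the horizontal profile of $\nu_n$ on any slip plane $s$ in a given strip is essentially the same fixed cell-density profile of $\nu$ in that strip. Since $d(\mu_n,\nu_n)$ decomposes exactly as $\sum_s \tfrac{m_{\mu_n,s}}{n}\,d_1(\tilde\mu_{n,s},\tilde\nu_{n,s})$ (1D Wasserstein slip plane by slip plane), Jensen's inequality goes the wrong way: if the per-slip-plane profiles $\tilde\mu_{n,s}$ of $\mu_n$ vary within a strip, $\sum_s \tfrac{m_{\mu_n,s}}{n}\,d_1(\tilde\mu_{n,s},\rho_k)$ can be strictly and persistently larger than $d_1(\overline\mu_k,\rho_k)$. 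A concrete failure: take $\mu=\nu$ supported on the graph of a Lipschitz $g$ that oscillates on a scale $\ll h$, with $(\pi_2)_\#\mu$ absolutely continuous, $\mathcal F(\mu)<\infty$, and $d(\mu,\nu)=0$; then $\mu^h=\nu^h$ so your target $d(\mu^h,\nu^h)=0$, but the atoms of $\mu_n$ on slip plane $s$ sit near $g(s)$ while your $\nu_n$ spreads each slip plane's atoms over the whole horizontal range of the strip, giving $\liminf_n d(\mu_n,\nu_n)\gtrsim \textrm{osc}(g)>0$ for every fixed $h$. The diagonal over $h$ does not rescue this because the defect persists for all $h$ small. (Your auxiliary claim $d(\mu^h,\nu^h)\to d(\mu,\nu)$ is actually correct — by joint convexity of $d_1$ applied to the horizontal conditionals together with lower semicontinuity — but it is not the bottleneck.)

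The paper avoids exactly this problem by not choosing a one-size-fits-all horizontal profile: in Lemma~\ref{Approx1} it works strip by strip, lists \emph{all} horizontal coordinates of $\mu_n$'s atoms in the strip, and applies a \emph{monotone rearrangement} onto target horizontal coordinates that recover $(\pi_1)_\#\nu^m$. Because each atom keeps its own height and the map is non-decreasing in the horizontal coordinate, the strip-wise transport cost equals $d_1((\pi_1)_\#\mu_n^m,\lambda_n^m)$, which converges to $d_1((\pi_1)_\#\mu^m,(\pi_1)_\#\nu^m)\le d(\mu^m,\nu^m)$ by Lemma~\ref{lemma:props-d}(\ref{lemma:props-d:12}); summing strips gives~(iii). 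Crucially, when $\mu_n$'s atoms are already near the target the monotone map is close to the identity and the cost collapses, which is precisely what your equispaced placement does not achieve. Lemma~\ref{Approx2} then perturbs $\hat\nu_n$ onto a grid to enforce the $r_n$-separation and membership in $Y_n(\gamma,c)$, and the energy bound~(iv) is proved by the densest-packing/annulus argument you correctly sketched. So your proposal would be repaired by replacing the equispaced cell filling with this monotone rearrangement step, while keeping your rounding and parameter analysis.
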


The latter two conditions in Theorem~\ref{thm:joint} motivate the name \textit{joint recovery sequence} for the sequence $\nu_n$. 
The construction of $(\nu_n)$ relies on some technical steps which are summarised in the following Lemmas~\ref{Approx1} and~\ref{Approx2}.

\begin{lemma}\label{Approx1}
Assume \eqref{hyp:enrn}.
For $n\in \N$ let $\{z_i^n:\ i=1,\dots,n\}$ be a family of points in $\Omega$ with $\{z_i^n\}_i\subset \mathcal R$, and let $\mu_n:= \frac{1}{n}\sum_{i=1}^n\delta_{z_i^n}$ denote the corresponding probability measure. Assume that $\mu_n\weakto\mu$ narrowly, as $n\to \infty$. 
Then for every $\nu \in \mathcal{P}(\Omega)$ with ${\rm supp}\, \nu \subset \mathcal R$ and $(\pi_2)_{\#} \mu = (\pi_2)_{\#}\nu$, there exists a sequence $(\hat{\nu}_n) \subset \mathcal{P}(\Omega)$, 
$\hat{\nu}_n= \frac{1}{n}\sum_{i=1}^n\delta_{\hat z_i^n}$,  such that ${\rm supp}\, \hat\nu_n \subset \mathcal R$, $(\pi_2)_{\#} \hat \nu_n = (\pi_2)_{\#} \mu_n$ for every $n$, 
$\hat{\nu}_n\weakto\nu$ narrowly, as $n\to\infty$, and 
$$
\limsup_{n\to\infty} d(\mu_n,\hat{\nu}_n) \leq d(\mu,\nu).
$$
\end{lemma}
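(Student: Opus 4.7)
The idea is to construct $\hat\nu_n$ by reshuffling only the horizontal coordinates of the dislocations of $\mu_n$ within thin horizontal strips; the vertical marginal is then preserved by construction, and the horizontal distribution in each strip is tuned to approximate $\nu$. The key structural fact I would exploit is the disintegration: since $(\pi_2)_{\#}\mu=(\pi_2)_{\#}\nu=:\zeta$, writing $\mu=\tilde\mu_{y_2}\otimes\zeta$ and $\nu=\tilde\nu_{y_2}\otimes\zeta$, an argument analogous to the one used to prove Lemma~\ref{lemma:props-d}\eqref{lemma:props-d:dual} yields $d(\mu,\nu)=\int_{\R} d_1(\tilde\mu_{y_2},\tilde\nu_{y_2})\,d\zeta(y_2)$. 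This reduces the problem to a family of one-dimensional transport problems between the horizontal slices of $\mu$ and $\nu$.

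For the construction I would fix an integer $m\geq 1$, partition $\R$ into intervals $J_k^m$ of length $1/m$ chosen so that $\zeta(\partial J_k^m)=0$ for every $k$, and set $S_k^m:=\R\times J_k^m$. For every $k$ with $N_k^{n,m}:=n\mu_n(S_k^m)\geq 1$, sort the horizontal coordinates of the dislocations of $\mu_n$ lying in $S_k^m$ in increasing order, say $z_{(1)}^{n,m,k}\cdot e_1\leq\ldots\leq z_{(N_k^{n,m})}^{n,m,k}\cdot e_1$, with corresponding vertical coordinates $s_{(i)}^{n,m,k}$, and let $x_{(1)}^{n,m,k}\leq\ldots\leq x_{(N_k^{n,m})}^{n,m,k}$ be the $N_k^{n,m}$ quantiles of the normalized horizontal marginal $\eta_k^m:=(\pi_1)_{\#}(\nu\llcorner S_k^m)/\nu(S_k^m)$ (choose arbitrary horizontal positions if $\nu(S_k^m)=0$). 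Define
\[
\hat\nu_n^m:=\frac1n\sum_k\sum_{i=1}^{N_k^{n,m}}\delta_{(x_{(i)}^{n,m,k},\,s_{(i)}^{n,m,k})}.
\]
By construction $(\pi_2)_{\#}\hat\nu_n^m=(\pi_2)_{\#}\mu_n$, and since $\mathcal R$ is an axis-parallel rectangle containing $\supp\nu$ and $\supp\mu_n$, also $\supp\hat\nu_n^m\subset\mathcal R$.

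For the distance bound, the diagonal pairing (each $z_{(i)}^{n,m,k}$ sent to $\hat z_{(i)}^{n,m,k}$) defines an admissible plan in $\Gamma(\mu_n,\hat\nu_n^m)$, whence
\[
d(\mu_n,\hat\nu_n^m)\leq\sum_k\frac{N_k^{n,m}}{n}\,d_1\!\bigl(\tfrac1{N_k^{n,m}}\!\sum_i\delta_{z_{(i)}^{n,m,k}\cdot e_1},\,\tfrac1{N_k^{n,m}}\!\sum_i\delta_{x_{(i)}^{n,m,k}}\bigr).
\]
Fixing $m$ and letting $n\to\infty$: $N_k^{n,m}/n\to\mu(S_k^m)$, the two inner empirical measures converge narrowly to the normalized horizontal marginals of $\mu$ and of $\nu$ in the strip respectively (using that $\zeta(\partial J_k^m)=0$), and $d_1$ is continuous under narrow convergence on the compact set $\pi_1(\mathcal R)$. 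Writing each slice marginal as the average over $J_k^m$ of the $\tilde\mu_{y_2}$ (resp.\ $\tilde\nu_{y_2}$) against $d\zeta/\zeta(J_k^m)$ and applying Jensen's inequality via the convexity of $d_1$ yields
\[
\limsup_{n\to\infty} d(\mu_n,\hat\nu_n^m)\leq\sum_k\int_{J_k^m}d_1(\tilde\mu_{y_2},\tilde\nu_{y_2})\,d\zeta(y_2)=d(\mu,\nu).
\]

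Finally, testing against $\phi\in C(\Omega)$ bounded, using uniform continuity of $\phi$ on $\mathcal R$ and the fact that the vertical coordinates in each $S_k^m$ lie in an interval of width $1/m$, one checks that $\lim_{n\to\infty}\int\phi\,d\hat\nu_n^m=\int\phi\,d\nu+e_m(\phi)$ with $e_m(\phi)\to 0$ as $m\to\infty$; hence $\hat\nu_n^m\weakto\nu^m$ narrowly as $n\to\infty$ with $\nu^m\weakto\nu$ as $m\to\infty$. Since narrow convergence on $\P(\Omega)$ is metrizable, a diagonal extraction $m=m_n\to\infty$ produces $\hat\nu_n:=\hat\nu_n^{m_n}$ satisfying all four required conclusions. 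The main obstacle is the joint management of the two scales (the strip thickness $1/m$ controlling narrow convergence, and the discretization level $n$), which forces the diagonal argument; the conceptual step that makes the distance inequality tight is the Jensen-type reduction via disintegration, which converts the within-strip averaging into the full integral $\int d_1(\tilde\mu_{y_2},\tilde\nu_{y_2})\,d\zeta$ expressing $d(\mu,\nu)$.
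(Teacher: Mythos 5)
Your proof is correct and shares the same skeleton as the paper's: slice the domain into thin horizontal strips, reassign only horizontal coordinates via a monotone rearrangement inside each strip (which automatically preserves the vertical marginal), and then run a diagonal argument over the strip thickness. The implementation differs in three ways, all sound. First, where the paper asserts the existence of index families $I_n^m$ with a negligible residual set $I_n^*$, you instead choose strip boundaries $\partial J_k^m$ to be $\zeta$-null; by Portmanteau this makes $\mu_n\llcorner S_k^m\weakto\mu\llcorner S_k^m$ directly and eliminates the residual, which is a cleaner construction. Second, for the distance estimate the paper invokes Part~\ref{lemma:props-d:12} of Lemma~\ref{lemma:props-d} to bound the per-strip one-dimensional cost $d_1((\pi_1)_\#\mu^m,(\pi_1)_\#\nu^m)$ by $d(\mu^m,\nu^m)$; you obtain the same inequality from the disintegration $d(\mu,\nu)\geq\int_{\R}d_1(\tilde\mu_{y_2},\tilde\nu_{y_2})\,d\zeta(y_2)$ (that inequality, the only direction you actually need, is exactly what is established in the proof of Part~\ref{lemma:props-d:dual}) combined with joint convexity of $W_1$ via Jensen. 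Both routes deliver the same bound; yours isolates the mechanism (averaging slices can only shrink the Wasserstein cost) more explicitly. Third, for narrow convergence the paper introduces auxiliary measures $\bar\nu^m_n$ and bounds $d_1(\hat\nu_n,\bar\nu_n)\leq 3\delta$, while you test directly against $\phi\in C(\Omega)$ and control the error by $\omega_\phi(1/m)$ using uniform continuity on $\mathcal R$; both give what the diagonal needs. Minor remarks: the "quantiles" should be read as any deterministic sequence of $N_k^{n,m}$ points whose empirical measure converges to $\eta_k^m$ (atomic $\eta_k^m$ is handled by any such choice); and you should note that because $(\pi_2)_\#\mu=(\pi_2)_\#\nu$ the case $\nu(S_k^m)=0$ forces $\mu(S_k^m)=0$, so $N_k^{n,m}/n\to 0$ and the arbitrary placement contributes nothing in the limit, as you implicitly use.
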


\begin{proof}
Let $\nu \in \mathcal{P}(\Omega)$ be such that ${\rm supp}\, \nu \subset \mathcal R$ and $(\pi_2)_{\#} \mu = (\pi_2)_{\#}\nu$.
Fix $\delta>0$. For every $m\in\Z$ we set $S_m:=\Omega\cap (\R\times[m\delta,(m+1)\delta))$ and we define
$\mu^m:=\mu\llcorner S_m$ and $\nu^m:=\nu\llcorner S_m$.
From the assumption $(\pi_2)_{\#} \mu = (\pi_2)_{\#}\nu$ it follows immediately that $(\pi_2)_{\#} \mu^m = (\pi_2)_{\#}\nu^m$
and $\mu^m(\Omega)=\nu^m(\Omega)$.
Moreover, from the definition of the distance $d$ we have 
\begin{equation}\label{lm99}
d(\mu,\nu)= \sum_{m\in\Z} d(\mu^m, \nu^m).
\end{equation}

\noindent
We can construct a family $\{I^m_n\}_{m\in\Z}$ of disjoint subsets of $\{1,\dots, n\}$ with the following properties:
\begin{eqnarray}
&\displaystyle \mu^m_n:=\frac1n \sum_{i\in I^m_n}\delta_{z^i_n} \weakto \mu^m \quad \textrm{as }n\to \infty, \quad \text{for every } m\in\Z,
\label{lm100}
\\
&\displaystyle \dist(z^i_n, S_m)\leq\delta \quad \text{for every } i\in I^m_n, m\in\Z,n\in\N,
\label{lm101}
\\
&\displaystyle \label{lm102}
\frac1n \# (I_n^*)\to 0,
\end{eqnarray}
where $I_n^*:=\{1,\dots,n\}\setminus\cup_{m\in\Z} I^m_n$.

For simplicity of notation we denote the Cartesian coordinates of $z^i_n$ by $(\xi^i_n,\zeta^i_n)$.
For every $n\in\N$, $m\in\Z$, and $i\in I^m_n$ let $\tilde \xi^{m,i}_n\in\R$ be chosen so that 
\begin{equation}\label{lm103}
\lambda^m_n:=\frac1n \sum_{i\in I^m_n}\delta_{\tilde \xi^{m,i}_n} \weakto 
(\pi_1)_\#\nu^m,
\end{equation}
as $n\to\infty$.
Let $T^m_n$ be the non-decreasing map from $\{\xi^i_n\}_{i\in I^m_n}$ to $\{\tilde \xi^{m,i}_n\}_{i\in I^m_n}$.
We define
$$
\hat\nu^m_n:=\frac1n \sum_{i\in I^m_n}\delta_{(T^m_n(\xi^i_n), \zeta^i_n)} \quad \text{for }m\in\Z.
$$
Up to modifying slightly the choice of $\tilde \xi^{m,i}_n$, if needed, we can assume that $\textrm{supp}\,\hat\nu^m_n\subset \mathcal R$.
We note that \eqref{lm103} and the definition of $\hat\nu^m_n$ imply
\begin{equation}\label{lm104}
(\pi_1)_\#\hat\nu^m_n= \lambda^m_n \weakto(\pi_1)_\#\nu^m,
\end{equation}
as $n\to\infty$. Clearly $(\pi_2)_{\#} \mu^m_n = (\pi_2)_{\#} \hat \nu^m_n$. Moreover, using the fact that $T^m_n$ is non-decreasing, we have
$$
d(\mu^m_n,\hat\nu^m_n)= \frac1n \sum_{i\in I^m_n} |\xi^i_n-T^m_n(\xi^i_n)|
=d_1((\pi_1)_\#\mu^m_n,(\pi_1)_\#\hat\nu^m_n),
$$
so that by \eqref{lm100} and \eqref{lm104} we obtain
\begin{equation}\label{lm105}
\lim_{n\to\infty} d(\mu^m_n,\hat\nu^m_n)=d_1((\pi_1)_\#\mu^m, (\pi_1)_\#\nu^m)
\end{equation}
for every $m\in\Z$.

Now we define
$$
\hat\nu_n:=\sum_{m\in\Z} \hat\nu^m_n + \frac1n\sum_{i\in I^*_n}\delta_{z^i_n}.
$$ 
Clearly $\textrm{supp}\,\hat\nu_n \subset \mathcal R$ since both $\hat \nu_n^m$ and $\mu_n$ satisfy the same condition. We will now prove that the sequence $(\hat\nu_n)$ satisfies 
\begin{itemize}
\item[(1)] $(\pi_2)_{\#} \hat \nu_n = (\pi_2)_{\#} \mu_n$ for every $n$;
\smallskip
\item[(2)] $\displaystyle\limsup_{n\to\infty} d_1(\hat{\nu}_n,\nu)\leq 3\delta$;
\smallskip
\item[(3)] $\displaystyle\limsup_{n\to\infty} d(\mu_n,\hat{\nu}_n) \leq d(\mu,\nu)$.
\end{itemize}
Property~(1) holds by construction. Moreover, by Part \ref{lemma:props-d:12} of Lemma~\ref{lemma:props-d}, \eqref{lm99}, and \eqref{lm105}, we have
\begin{eqnarray*}
\limsup_{n\to\infty} d(\mu_n,\hat\nu_n) & \leq & \sum_{m\in\Z} \limsup_{n\to\infty} d(\mu^m_n,\hat\nu^m_n)
=  \sum_{m\in\Z} d_1((\pi_1)_\#\mu^m, (\pi_1)_\#\nu^m)
\\
& \leq &  \sum_{m\in\Z} d(\mu^m,\nu^m)
= d(\mu,\nu),
\end{eqnarray*}
which proves condition (3).

To show (2), we introduce the auxiliary measures
$$
\bar\nu^m_n:= \begin{cases}
\dfrac{\hat\nu^m_n(\Omega)}{\nu^m(\Omega)}\,\nu^m & \text{ if } \nu^m(\Omega)>0
\smallskip\\
\hat\nu^m_n & \text{ if } \nu^m(\Omega)=0
\end{cases}
$$
for every $n\in\N$, $m\in\Z$, and
$$
\bar\nu_n:= \sum_{m\in\Z}\bar\nu^m_n + \frac1n\sum_{i\in I^*_n}\delta_{z^i_n}
$$
for every $n\in\N$. Since $\hat\nu^m_n(\Omega)=\mu^m_n(\Omega)\to \mu^m(\Omega)$, as $n\to\infty$, and \eqref{lm102} holds,
it is immediate to see that $\bar\nu_n\weakto\nu$, as $n\to\infty$, and thus, $d_1(\bar\nu_n, \nu)\to0$,
as $n\to\infty$. In particular, we have
\begin{equation}\label{lm106}
(\pi_1)_\#\bar\nu^m_n \weakto (\pi_1)_\#\nu^m,
\end{equation}
as $n\to\infty$.

Since $\lim_{n\to\infty}d_1(\bar\nu_n, \nu)=0$, to prove condition (2) it is enough to show that
\begin{equation}\label{claim100}
\limsup_{n\to\infty}d_1(\hat\nu_n,\bar\nu_n)\leq 3\delta.
\end{equation}
By \eqref{lm101} the support of $\hat\nu^m_n$ and $\bar\nu^m_n$ is contained in $\R\times[(m-1)\delta,(m+2)\delta]$
for every $n\in\N$ and $m\in\Z$. Therefore, for $m\in\Z$ such that $\nu^m(\Omega)>0$, we have, by \eqref{def:d1} and \eqref{def:Gamma1}:
\begin{eqnarray*}
d_1(\hat\nu^m_n, \bar\nu^m_n) 
& \leq & \inf\Big\{ \iint_{\Omega\times\Omega} \big(|x_1-y_1|+3\delta\big)\, d\gamma(x,y): \ 
\gamma\in \Gamma_1(\hat\nu^m_n, \bar\nu^m_n) \Big\}
\\
& = & \inf\Big\{ \iint_{\pi_1(\Omega)\times \pi_1(\Omega)} |x_1-y_1|\, d\gamma(x_1,y_1): \ 
\gamma\in \Gamma((\pi_1)_\#\hat\nu^m_n, (\pi_1)_\#\bar\nu^m_n) \Big\}
+3\delta 
\\
& = & d_1((\pi_1)_\#\hat\nu^m_n, (\pi_1)_\#\bar\nu^m_n) +3\delta\, \hat\nu^m_n(\Omega).
\end{eqnarray*}

\noindent
Combining this inequality with \eqref{lm104} and \eqref{lm106}, we deduce that
$$
\limsup_{n\to\infty} d_1(\hat\nu^m_n, \bar\nu^m_n) \leq 3\delta\, \hat\nu^m(\Omega)
$$
for every $m\in\Z$ such that $\nu^m(\Omega)>0$.
Since $\bar\nu^m_n=\hat\nu^m_n$ for every $m\in\Z$ such that $\nu^m(\Omega)=0$, we conclude that
$$
\limsup_{n\to\infty} d_1(\hat\nu_n, \bar\nu_n)\leq \sum_{m\in\Z} \limsup_{n\to\infty} d_1(\hat\nu^m_n, \bar\nu^m_n)  
\leq 3\delta \sum_{m\in\Z}\nu^m(\Omega)=3\delta,
$$
which proves the claim \eqref{claim100}, hence condition~(2). 

To conclude the proof of the lemma it is enough to consider a sequence $\delta_k\searrow0$
and apply a diagonal argument.
\end{proof}

\begin{lemma}\label{Approx2}
Let $\hat z_1,\dots, \hat z_n\in\Omega$, $n\in\N$, be such that $\{\hat z_i\}_i\subset \mathcal R$, and let $\hat\nu:= \frac{1}{n}\sum_{i=1}^n\delta_{\hat z_i}$ denote the corresponding probability measure. 
Then there exists $\eta_0>0$, depending only on $\mathcal R$, such that for every $\eta\in(0, \eta_0)$
there exists a modification $\nu$ of $\hat\nu$, $\nu= \frac{1}{n}\sum_{i=1}^n\delta_{z_i}\in\mathcal{P}(\Omega)$, satisfying the following properties:
\begin{itemize}
\item[(a)] $(\pi_2)_{\#} \nu = (\pi_2)_{\#}\hat{\nu}$;
\smallskip
\item[(b)] $d(\hat{\nu},\nu)\leq\eta$;
\smallskip
\item[(c)] $\supp\nu \subset \mathcal R$;
\smallskip
\item[(d)] $|\pi_1(z_i - z_j)|\geq \frac{\eta}{m}$ for every $z_i, z_j \in \supp\,\nu$ such that $\pi_2(z_i) = \pi_2(z_j)$, $i\neq j$,
\smallskip
\end{itemize}
where $m$ is the maximum number of dislocations per slip plane of $\hat\nu$, that is,
$$
m:=\max\#\big\{ i:\ \pi_2(\hat z_i)=s, \ s\in  \supp\, [(\pi_2)_{\#}\hat\nu]\big\}.
$$
\end{lemma}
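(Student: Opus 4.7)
The plan is to work slip-plane by slip-plane, redistributing the horizontal coordinates so that the spacing condition (d) is enforced while the vertical coordinates are kept fixed (making (a) automatic) and each individual dislocation is moved horizontally by at most $\eta$. Writing the rectangle as $\mathcal R=[a,b]\times[c,d]$, I would set $\eta_0:=b-a$ and fix $\eta\in(0,\eta_0)$. The per-slip-plane construction is independent, and the resulting pairing $\hat z_i \mapsto z_i$ is a horizontal transport plan in $\Gamma(\hat\nu,\nu)$ whose cost is directly controlled by the per-point displacement, so a uniform displacement bound $|y_i-x_i|<\eta$ will give (b).

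On a fixed slip plane $s\in S_{\hat\nu}$ (see \eqref{classS}), let $k_s\leq m$ be the number of dislocations of $\hat\nu$ on $s$, with horizontal coordinates $x_1\leq\dots\leq x_{k_s}\in[a,b]$. I would construct new coordinates $y_1<\dots<y_{k_s}\in[a,b]$ satisfying $y_{i+1}-y_i\geq \eta/m$ via a two-pass sweep. First, a left-to-right pass defines
$$
y_i^{(1)}:=\max\bigl(x_i,\;y_{i-1}^{(1)}+\eta/m\bigr),\qquad y_0^{(1)}:=-\infty,
$$
which enforces the spacing and satisfies $0\leq y_i^{(1)}-x_i\leq (i-1)\eta/m$ by a straightforward induction. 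If $y_{k_s}^{(1)}\leq b$ take $y_i:=y_i^{(1)}$; otherwise a right-to-left pass sets $y_{k_s}:=b$ and $y_i:=\min(y_i^{(1)},\,y_{i+1}-\eta/m)$ for $i<k_s$. Setting $\Delta_i:=y_i^{(1)}-y_i\geq 0$, the identity $\Delta_i=\max(0,\,y_i^{(1)}-y_{i+1}^{(1)}+\Delta_{i+1}+\eta/m)$ combined with $y_{i+1}^{(1)}-y_i^{(1)}\geq \eta/m$ yields $\Delta_i\leq \Delta_{i+1}\leq \Delta_{k_s}=(y_{k_s}^{(1)}-b)^+\leq (k_s-1)\eta/m$. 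Hence $|y_i-x_i|\leq (k_s-1)\eta/m\leq (1-1/m)\eta<\eta$. In the right-to-left case, iterating $y_i\geq y_{i+1}-\eta/m$ gives $y_i\geq b-(k_s-1)\eta/m\geq a$ thanks to $\eta\leq b-a$ and $k_s-1<m$; in the other case $y_i\geq x_i\geq a$. The upper bound $y_i\leq b$ follows from $y_{k_s}\leq b$ and the gap condition. Thus $y_i\in[a,b]$ and the per-slip-plane targets are met.

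Performing this construction independently on each slip plane and letting $z_i$ denote the displaced image of $\hat z_i$ yields $\nu:=\frac1n\sum_{i=1}^n\delta_{z_i}$. Property (a) is immediate since only horizontal coordinates change, (c) follows from $y_i\in[a,b]\subset \mathcal R$, and (d) follows by monotonicity from the per-slip-plane gap bound. For (b), the obvious pairing produces $\gamma\in\Gamma(\hat\nu,\nu)$ with
$$
d(\hat\nu,\nu)\leq \iint_{\Omega\times\Omega}|x-y|\,d\gamma(x,y)=\frac1n\sum_{i=1}^n|\pi_1(\hat z_i-z_i)|\leq \eta.
$$
The main subtlety is enforcing the $\eta/m$-spacing jointly with the containment in $\mathcal R$: the total horizontal span needed on a slip plane is $(k_s-1)\eta/m<\eta$, which fits in $[a,b]$ precisely when $\eta<b-a$. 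This is the reason $\eta_0$ must be allowed to depend on $\mathcal R$ (through its horizontal width); once this is secured, the two-pass sweep is an explicit rearrangement whose bookkeeping is routine.
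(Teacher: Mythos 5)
Your proof is correct in substance but takes a different construction than the paper's. The paper works slip-plane by slip-plane as you do, but on each slip plane it projects the $m_s$ horizontal coordinates onto an $\eta/m_s$-spaced grid $G_s\subset[a,b]$ and takes a $d_1$-minimizing injective placement; the grid structure trivializes both the spacing and the containment in $\mathcal R$, and the displacement bound per slip plane follows from the minimality. Your explicit two-pass sweep is more elementary and yields the stronger \emph{pointwise} bound $|y_i-x_i|\leq(k_s-1)\eta/m<\eta$ rather than an average bound, so (b) drops out from the identity coupling; the trade-off is more delicate bookkeeping for containment.

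That bookkeeping has one flaw you should repair: in the right-to-left case you invoke ``iterating $y_i\geq y_{i+1}-\eta/m$,'' but the construction $y_i=\min\bigl(y_i^{(1)},\,y_{i+1}-\eta/m\bigr)$ gives $y_i\leq y_{i+1}-\eta/m$, so the iteration as written produces only an upper bound. The correct argument uses the monotonicity $\Delta_i\leq\Delta_{i+1}$ that you have already established. Since $\Delta$ is non-decreasing, the set $\{i:\Delta_i>0\}$ is a suffix $\{i^*,\dots,k_s\}$ (nonempty when $y_{k_s}^{(1)}>b$). For $i<i^*$ one has $\Delta_i=0$, hence $y_i=y_i^{(1)}\geq x_i\geq a$. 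For $i^*\leq i<k_s$ one has $\Delta_i>0$, which by definition of $\Delta_i$ forces the min to be achieved at the second argument, i.e.\ $y_i=y_{i+1}-\eta/m$ \emph{exactly}; chaining from $y_{k_s}=b$ then gives $y_i=b-(k_s-i)\eta/m\geq b-(k_s-1)\eta/m>a$ using $k_s\leq m$ and $\eta<b-a$. With this substitution the argument is complete and matches the paper's conclusion with the same $\eta_0=b-a$.
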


\begin{proof}
Let $S:=\supp\, [(\pi_2)_{\#}\hat\nu]$ and for $s\in S$ let $m_s$  be the number of dislocations on the $s$-th slip plane, as in \eqref{classS}. By definition $m=\max_{s\in S}m_s$.
We can rewrite the measure $\hat \nu$ by splitting its vertical and horizontal marginals as
$$
\hat \nu(x,y) = \sum_{s\in S}\frac{m_s}{n}\delta_s(y)\hat \nu_s(x),
$$
where $\hat \nu_s \in \mathcal{P}(\mathbb{R})$ is the normalised dislocation density on the $s$-th slip plane. More explicitly, we write
$$
\hat \nu_s = \frac{1}{m_s}\sum_{i=1}^{m_s}\delta_{\hat x_i^s} \quad\text{with } 
\{\hat x_i^s:\ i=1\dots,m_s\} := \{\pi_1(\hat z_i): \ \hat z_i\in \supp\, \hat{\nu},\ \pi_2(\hat z_i)=s\},
$$
and we assume without loss of generality that the points $\{\hat x_i\}$ of the support of $\hat \nu_{s}$ are ordered increasingly.

To construct the required modification $\nu$ of $\hat \nu$, since we want to preserve the vertical marginals, we will only modify the \textit{horizontal} marginals $\hat \nu_s$, for every $s\in S$. More precisely, for every $s\in S$ we consider a grid of size $\eta/m_s$ and define $G_s$ as
$$
G_s:=\Big\{x\in \frac{\eta}{m_s}\,\Z: \  (x,s)\in \ \mathcal{R}\Big\}.
$$
Now let $\nu_s:=  \frac{1}{m_s}\sum_{i=1}^{m_s}\delta_{x^s_i}$ be a minimiser of the following problem:
$$
\min \Big\{d_1(\hat\nu_s,\mu): \ \mu = \frac{1}{m_s}\sum_{i=1}^{m_s}\delta_{x_i}, \ x_i \in G_s \text{ for every } i ,\  x_i\neq x_j \text{ if }  i\neq j\Big\},
$$
where the points $\{x_i^s\}$ are ordered increasingly. Note that to guarantee that the above minimum is taken over a class that is nonempty, $\eta$ has to be smaller than the width of $\mathcal R$ (that we denote by $\eta_0$). 
By construction we have that
\begin{equation}\label{propertiescd}
|x_i^s- x_j^s| \geq \frac{\eta}{m_s} \quad \text{if } i\neq j \quad \text{and} \quad (x_i^s,s) \in \mathcal R
\quad \text{for every } i.
\end{equation}
Moreover, from the minimality it follows that 
\begin{equation}\label{propertyb}
d_1(\hat \nu_s,\nu_s)\leq \eta.
\end{equation}
We now define the measure $\nu\in \mathcal{P}(\Omega)$ as
$$
\nu(x,y):= \sum_{s\in S}\frac{m_{s}}{n}\delta_s(y) \nu_{s}(x).
$$
From \eqref{propertiescd} it follows immediately that $\nu$ satisfies (a), (c), and (d). As for property (b), we have
$$
d(\hat{\nu},\nu) = \sum_{s\in S} \frac{m_{s}}{n} d_1(\hat\nu_{s},  \nu_{s}) \leq \eta,
$$
where the last inequality follows from \eqref{propertyb}.
\end{proof}

We are now ready for the proof of Theorem~\ref{thm:joint}. Roughly speaking, given $\mu\in \mathcal{P}^{\infty}_{\gamma,c}(\Omega)$, a competitor $\nu$ with $(\pi_2)_\#\mu=(\pi_2)_\#\nu$, and $\mu_n\in Y_n(\gamma,c)$ for every $n$ with $\mu_n\rightharpoonup \mu$ narrowly, Lemmas~\ref{Approx1} and~\ref{Approx2} will provide a joint recovery sequence $\nu_n$ by \textit{moving} the horizontal coordinates of the points in the support of $\mu_n$ so that the modified sequence converges to the given measure~$\nu$. 

\begin{proof}[Proof of Theorem~\ref{thm:joint}]
Let $\nu\in\mathcal P(\Omega)$ be such that $(\pi_2)_{\#} \mu = (\pi_2)_{\#}\nu$ and $\mathcal{F}(\nu)<+\infty$.
We consider the cases $\gamma \in (-\frac12,\frac12)$ and $\gamma=\frac12$ separately. 
\smallskip

Let $\gamma \in(-\frac12,\frac12)$. We apply Lemma~\ref{Approx1} to the sequence $(\mu_n)$ and to the measure $\nu$ 
and construct a sequence $(\hat\nu_n)$. Then
we apply Lemma~\ref{Approx2} to each measure $\hat\nu_n$ with $\eta= n^{-\frac12+\gamma}$.
In this way we obtain a new sequence $(\nu_n)$ such that $\textrm{supp}\,\nu_n \subset \mathcal R$, 
$(\pi_2)_{\#} \nu_n = (\pi_2)_{\#} \mu_n$ for every $n$,
$\nu_n$ converges to $\nu$ narrowly (in view of Part~\ref{lemma:props-d:1} of Lemma~\ref{lemma:props-d}
and (b) of Lemma~\ref{Approx2}), and
$$
\limsup_{n\to\infty} d(\mu_n, \nu_n) \leq d(\mu,\nu).
$$

We shall now prove that $(\nu_n)$ is the required joint recovery sequence. 
Taking into account that $(\pi_2)_{\#} \nu_n = (\pi_2)_{\#} \mu_n$, 
by property (d) of Lemma~\ref{Approx2} and by our choice of $\eta$
we have that for every $z^n_i, z^n_j \in \supp\, \nu_n$, with $i\neq j$, 
\begin{equation}\label{n-mindist}
|z^n_i - z^n_j|\geq  cn^{-1} \wedge c n^{-\frac12+\gamma}\geq  cn^{-1} \stackrel{\eqref{hyp:enrn}}\gg r_n.
\end{equation}
Therefore, $\nu_n\in X_n$. Since $\mu_n\in Y_n(\gamma,c)$ and 
$(\pi_2)_{\#} \nu_n = (\pi_2)_{\#} \mu_n$, we conclude that $\nu_n\in Y_n(\gamma,c)$.
In what follows we set $c=1$ for notational simplicity.

To complete the proof of the theorem we show that 
\begin{equation}\label{ls-claimjoint-a}
\limsup_{n\to\infty} {\mathcal F}_n(\nu_n) \leq {\mathcal F}(\nu).
\end{equation}
Applying the preliminary estimates \eqref{Bclaim}--\eqref{Cclaim} in the proof of Theorem~\ref{ups:density}
and the decomposition \eqref{renorm:F},
inequality \eqref{ls-claimjoint-a} is proved if we show that
\begin{equation}\label{ls-claimjoint}
\limsup_{n\to\infty} \frac12
\iint_{\Omega\times\Omega}V(y,z)\, d(\nu_n\boxtimes\nu_n)(y,z) \leq  \frac12
\iint_{\Omega\times\Omega}V(y,z)\, d\nu(y)\, d\nu(z).
\end{equation}
To prove \eqref{ls-claimjoint} we proceed similarly to the proof of the limsup inequality of Theorem~\ref{ups:density}. Let $M>0$ be fixed, and set $V_M:=V\wedge M$. By definition of $\nu_n$ we have
\begin{align}\label{en:splitjoint}
\frac12\iint_{\Omega\times\Omega}V(y,z)\, d(\nu_n\boxtimes\nu_n)(y,z) &= \frac1{2n^2}\sum_{\substack{i,j=1\\i\not=j}}^{n} V_M(z_i^n, z_j^n) + \frac1{2n^2}\sum_{\substack{i,j=1\\i\not=j}}^{n}(V-V_M)(z_i^n, z_j^n)\nonumber\\
&\leq \frac1{2n^2}\sum_{\substack{i,j=1\\i\not=j}}^{n} V_M(z_i^n, z_j^n) + \frac1{2n^2}\sum_{i=1}^{n}\sum_{\substack{j\neq i\\|z_i^n-z_j^n|<R_M}} V(z_i^n, z_j^n),
\end{align}
where $R_M \to 0$ as $M\to \infty$. Since the truncated function $V_M$ is continuous and bounded 
on an open set containing $\mathcal R\times\mathcal{R}$, for the first term in the right-hand side of \eqref{en:splitjoint} we have that
$$
\lim_{n\to \infty}  \frac1 {2n^2}\sum_{\substack{i,j=1\\i\not=j}}^{n} V_M(z_i^n, z_j^n)=\frac12 \iint_{\Omega\times \Omega} V_M(y,z)\,d\nu(y)\,d\nu(z).
$$
Since $V_M\leq V$, claim \eqref{ls-claimjoint} follows if we prove that for every $i=1,\dots,n$
\begin{equation}\label{reduced-claimjoint}
\lim_{M\to\infty}\limsup_{n\to \infty}\, \frac1{n^2}\sum_{i=1}^{n}\sum_{\substack{j\neq i\\|z_i^n-z_j^n|<R_M}} |V(z_i^n, z_j^n)|
= 0.
\end{equation}
Let $Q_{R_M}(z)$ denote the open square centred at $z$ with side length $2R_M$. Clearly $B_{R_M}(z_i^n)\subset Q_{R_M}(z_i^n)$
for every $i$; therefore, if we prove  
\begin{equation}\label{reduced-claimjoint2}
\lim_{M\to\infty}\limsup_{n\to \infty}\, \frac1{n^2}\sum_{i=1}^{n}\sum_{\substack{j\neq i\\ z^n_j \in Q_{R_M}(z_i^n)}} |V(z_i^n, z_j^n)|
= 0,
\end{equation}
then \eqref{reduced-claimjoint} will follow. First of all, by \eqref{imp-est}, we have the bound
\begin{equation}\label{en:bound_log}
\sum_{i=1}^{n}\sum_{\substack{j\neq i\\z^n_j \in Q_{R_M}(z_i^n)}} |V(z_i^n, z_j^n)| \leq C\hspace{-.02cm}\sum_{i=1}^{n}\hspace{-.1cm}\sum_{\substack{j\neq i\\z^n_j \in Q_{R_M}(z_i^n)}}\Big(1-\log\frac{|z_i^n - z_j^n|}{L} \Big).
\end{equation}
Since the right-hand side of \eqref{en:bound_log} is a decreasing function of the distances between the dislocation locations,  the energy of any dislocation arrangement can be estimated from above in terms of 
the energy of the most densely packed configuration. More precisely, 
taking into account that $\nu_n\in Y_n(\gamma,1)$ and \eqref{n-mindist} holds,
an upper bound in \eqref{en:bound_log} is generated by the arrangement where the slip planes are all at the minimum distance $n^{-\frac12+\gamma}$ from each other, and are packed with dislocations located at the minimum distance $1/n$ from one another. Without loss of generality we therefore limit our analysis to this special arrangement, and we denote by $\tilde{z}_j^n$ the corresponding dislocation positions. Since replacing $\{z_i^n\}$ with $\{\tilde{z}_i^n\}$ increases the right-hand side of \eqref{en:bound_log}, we have
\begin{equation}\label{e:estimate}
\frac1{n^2}\sum_{i=1}^{n}\sum_{\substack{j\neq i\\z^n_j \in Q_{R_M}(z_i^n)}} |V(z_i^n, z_j^n)| \leq \frac{C}{n^2}
\sum_{i=1}^{n}\sum_{\substack{j\neq i\\\tilde z^n_j \in Q_{R_M}(\tilde z_i^n)}}\Big(1-\log\frac{|\tilde z_i^n - \tilde z_j^n|}{L}\Big).
\end{equation}

Let $i=1,\dots, n$ be fixed.
To estimate the right-hand side of \eqref{e:estimate}, we first consider the region $A_i^n \subset Q_{R_M}(\tilde z_i^n)$ defined as the following union of concentric square annuli:
$$
A_i^n:=\bigcup_{p=2}^{P_1^n}Q_{\frac{p}{n}}(\tilde z_i^n)\setminus Q_{\frac{p-1}{n}}(\tilde z_i^n), \quad P_1^n:= \lfloor n^{\frac12 + \gamma}\rfloor.
$$
We note that each annulus in the set $A_i^n$ intersects only the slip plane containing $\tilde z_i^n$, since the largest annulus of the family has outer side length $2 P_1^n/n$, which by definition is smaller than $2 n^{-\frac12 + \gamma}$. Since, by assumption, two active consecutive slip planes are at a distance $n^{-\frac12 + \gamma}$, every annulus in the set $A_i^n$ only contains dislocations in the same slip plane as $\tilde z^n_i$. Moreover, there are at most two dislocations in each annulus since, by assumption, the distance between dislocations is $1/n$, which is exactly the width of each annulus. 

By these arguments, for $p=2,\dots,P_1^n$ we have 
\begin{equation}\label{est:V_annuli}
|\tilde z_i^n- \tilde z_j^n| = \frac{p-1}{n}, \quad \forall\, \tilde z_j^n \in Q_{\frac{p}{n}}(\tilde z_i^n)\setminus Q_{\frac{p-1}{n}}(\tilde z_i^n).
\end{equation}
Therefore we can estimate the energy contribution in the sets $A_i^n$ by using \eqref{est:V_annuli}, and we obtain
\begin{eqnarray}
\frac1{n^2}\sum_{i=1}^n\sum_{z^n_j \in A_i^n} \Big(1-\log\frac{|\tilde z_i^n - \tilde z_j^n|}{L}\Big)
& \leq & \frac{2}{n}\sum_{p=2}^{P^n_1}\Big(1-\log\frac{p-1}{nL} \Big)
\nonumber\\
&\leq & \frac{2}{n}\Big( P_1^n - \log( P^n_1!) + P^n_1\log (nL)\Big). \label{enA1n}
\end{eqnarray}
Since $P_1^n\to\infty$, as $n\to \infty$, we use the Stirling approximation
$$
\log(P!) = P\log P - P + O(\log P) \quad \textrm{as } P\to \infty. 
$$
For the last term in \eqref{enA1n} we have
\begin{equation}\label{StirlingP1}
\frac2n \Big( P_1^n - \log(P^n_1!) + P^n_1\log (nL) \Big) =
\frac{2P_1^n}{n} \Big(2-\log\frac{P_1^n}{nL}\Big) + O\Big(\frac{\log P_1^n}{n}\Big).
\end{equation}
Since $P_1^n \simeq n^{\frac12+\gamma}$, $P_1^n/n \simeq n^{-\frac12+\gamma}\to0$ as $n\to\infty$. 
Therefore, the right-hand side of \eqref{StirlingP1} tends to zero, and by \eqref{enA1n} we deduce that the energy contribution in $A_i^n$ is negligible as $n\to \infty$.

It remains now to estimate the energy contribution in the complement of the sets $A_i^n$, i.e., in
$$
B_i^n:= Q_{R_M}(\tilde z_i^n)\setminus A_i^n.
$$
Note that $B_i^n$ contains only dislocations located on different slip planes than $\tilde z_i^n$. This is because the horizontal length of the region containing dislocations is less than $n^{-\frac12 + \gamma}$, while the outer side of the largest annulus in $A_i^n$ is $2\lfloor n^{\frac12 + \gamma}\rfloor/n$, which is larger that $n^{-\frac12 + \gamma}$ for $n$ large enough. 

We now write the region $B_i^n$ as the union of horizontal strips of height $n^{-\frac12 + \gamma}$, as 
$$
B_i^n = \bigcup_{p=1}^{P_2^n} S_p^{i,n}, \qquad 
S_p^{i,n}:= \Big\{x\in B_i^n: pn^{-\frac12 +\gamma}\leq |\pi_2(x- \tilde z_i^n)| < (p+1)n^{-\frac12 +\gamma}\Big\},
$$ 
where $P_2^n:= \lfloor R_M n^{\frac12 - \gamma}\rfloor$. Then we have
$$
\frac1{n^2}\sum_{i=1}^n\sum_{z^n_j \in B_i^n} \Big(1-\log\frac{|\tilde z_i^n - \tilde z_j^n|}{L}\Big)
\leq 
\frac1{n^2}\sum_{i=1}^n \sum_{p=1}^{P_2^n} \, \sum_{\tilde z^n_j \in S_p^{i,n}} \Big(1-\log\frac{|\tilde z_i^n - \tilde z_j^n|}{L}\Big).
$$
Since the maximum number of dislocations per slip plane is $n^{\frac12+\gamma}$,  
\begin{equation}\label{eq:hor2}
\frac1{n^2}\sum_{i=1}^n \sum_{p=1}^{P_2^n} \, \sum_{\tilde z^n_j \in S_p^{i,n}} \Big(1-\log\frac{|\tilde z_i^n - \tilde z_j^n|}{L}\Big)
\leq  
\frac{1}{n}\sum_{p=1}^{P_2^n} n^{\frac12+\gamma} \Big(1-\log\frac{pn^{-\frac12+\gamma}}{L}\Big).
\end{equation}
Now, the right-hand side of \eqref{eq:hor2} can be estimated as follows:
$$
\frac{1}{n}\sum_{p=1}^{P_2^n} n^{\frac12+\gamma} \Big(1-\log\frac{pn^{-\frac12+\gamma}}{L}\Big)
\leq C\int_0^{R_M/L}(1-\log x)\,dx \leq C R_M(1-\log R_M);
$$
thus, it tends to zero, as $M\to \infty$.
This concludes the estimate of the energy contribution in the sets $B_i^n$. In view of
\eqref{e:estimate}, claim \eqref{reduced-claimjoint2} is proved.
\smallskip

We now consider the case $\gamma=\frac12$. 
For simplicity we assume that there is only one active slip plane and $c=1$; 
moreover, we assume that $\supp\,[(\pi_2)_{\#}\mu_n] = \supp\, [(\pi_2)_{\#}\mu] = \supp\,[(\pi_2)_{\#}\nu] =\{s\}$. 
In this case the problem becomes one-dimensional and we can construct the recovery sequence 
by hand, using the same ideas as in the proof of Theorem~\ref{ups:density}. 
More precisely, following the steps of the limsup inequality in Theorem~\ref{ups:density} 
we first approximate the measure $\nu$ by means of a sequence $(\nu^h)$ supported on a subset of $h\mathcal I\times\{s\}$, where
$$
\mathcal I:=  \bigcup_{m\in \mathbb{Z}}I_m, \qquad
I_m:=[2m,2m+1] .
$$
Up to a further approximation, we can assume that $n\nu^h(hI_m)\in \mathbb{N}_0$ for every $h$, $n$, and $m$.
This can be done without increasing the interaction energy. Moreover, since $\supp\,(\pi_2)_{\#}\nu^h =\{s\}$ and
$\nu^h\weakto\nu$ narrowly, as $h\to0$, we have $d(\nu^h,\nu)\to0$, as $h\to0$. In other words,
it is enough to construct a joint recovery sequence for the measures $\nu^h$.

Let $h>0$ be fixed. The recovery sequence $\nu^h_n$ is obtained by arranging 
$n\nu^h(hI_m)$ equispaced dislocations $\{z_i^n\}$ in every interval $hI_m$.  
We observe that $\nu_n^h\in Y_n(\frac12,1)$ and $(\pi_2)_\#\nu_n^h=(\pi_2)_\#\mu_n$ for every $n$; in particular, for every $i\neq j$
we have
\begin{equation}\label{smallestdistance}
|z_i^n - z_j^n| \geq \frac{C(h)}{n} \geq r_n.
\end{equation}
Moreover, by construction, $\nu_n^h \weakto \nu^h$ narrowly. Since $\supp\,(\pi_2)_{\#}\nu_n^h =\{s\}$, this implies that $d(\mu_n,\nu_n^h)\to d(\mu,\nu^h)$.

To prove that $\nu^h_n$ is a recovery sequence it remains to show \eqref{ls-claimjoint-a} and, as for the case $\gamma\neq\frac12$,
this reduces to proving \eqref{reduced-claimjoint}. By \eqref{imp-est} we have that
\begin{eqnarray}
\frac1{n^2}\sum_{i=1}^n\sum_{\substack{j\neq i\\|z_i^n-z^n_j|\leq R_M}} |V(z_i^n, z_j^n)|
& \leq & \frac C{n^2}\sum_{i=1}^n\sum_{\substack{j\neq i\\|z_i^n-z^n_j|\leq R_M}}\Big(1-\log\frac{|z_i^n - z_j^n|}{L}\Big)
\nonumber
\\
\label{estimategamma12}
& \leq & \frac Cn\sum_{p=1}^{\lfloor nR_M/C(h)\rfloor}\Big(1-\log\frac{C(h)p}{nL}\Big), 
\end{eqnarray}
where the last inequality follows by estimating the energy of the distribution of dislocations of $\nu^h_n$ with the most densely packed configuration, from \eqref{smallestdistance}.
By taking the limit as $n\to \infty$ in \eqref{estimategamma12} we have 
$$
\lim_{n\to \infty}\frac1{n^2}\sum_{i=1}^n\sum_{\substack{j\neq i\\|z_i^n-z^n_j|\leq R_M}} |V(z_i^n, z_j^n)| \leq C\int_{0}^{R_M/L} (1-\log x)\,dx,
$$
and the right-hand side converges to zero, as $M\to \infty$. Therefore \eqref{reduced-claimjoint} is proved also for $\gamma=\frac12$. 
\smallskip

This concludes the proof of the theorem. 
\end{proof}

\subsection{Convergence of the evolutions}

In this section we prove the convergence of the quasi-static evolution for the renormalized energy defined as in \eqref{e-stab} and \eqref{e-bal}.

\begin{theorem}\label{th:conv_evolutions}
Assume \eqref{hyp:enrn}. Let $f\in W^{1,1}(0,T;C(\Omega))$ and let $-\frac12<\gamma\leq \frac12$ and $c>0$.
Let $\mu^0\in\P(\Omega)$ with $\mathcal F(\mu^0)<+\infty$ and let $(\mu_n^0)$ be such that $\mu_n^0\weakto\mu^0$ narrowly, and 
\begin{equation}\label{e-data}
\mathcal{F}_n(\mu^0_n)\to \mathcal F(\mu^0),
\end{equation} 
as $n\to\infty$.
For every $n\in\N$ assume also that $\mu_n^0\in Y_n(\gamma,c)$
and $\mu_n^0$ satisfies the stability condition:
\begin{equation}\label{0sc}
\mathcal F_n(\mu_n^0) - \int_\Omega f(0) \,d\mu_n^{0} \leq   \mathcal F_n(\nu) + d(\nu, \mu_n^0) - \int_\Omega f(0) \,d\nu
\end{equation}
for every $\nu\in X_n$. For every $n\in\N$ let $t\mapsto\mu_n(t)$ be a quasi-static evolution on $[0,T]$ with initial value $\mu^0_n$ and potential force~$f$, according to Theorem~\ref{th:existence-finite-n}. Then 
\begin{enumerate}
\item \textbf{Compactness:} There exists a subsequence $\mu_n$ (without change in notation) and a limit curve $\mu:[0,T]\to\P^\infty_{\gamma,c}(\Omega)$ such that $\mu_n(t)\weakto \mu(t)$ for all $t\in [0,T]$; 
\item \textbf{Convergence:} The curve $\mu$ is a quasi-static evolution with initial value $\mu^0$ and force $f$ for the limit energy $\mathcal{F}$ and the dissipation $\mathcal D$. More precisely, the following two conditions are satisfied:
\begin{itemize}
\item[(qs1)$_\infty$] global stability: for every $t\in [0,T]$ and for every $\nu\in \mathcal{P}(\Omega)$,
\begin{equation}\label{0min-1}
\mathcal F(\mu(t))- \int_{\Omega}f(t)\,d\mu(t) \leq
\mathcal F(\nu) + d( \nu, \mu(t))- \int_{\Omega}f(t)\,d\nu;
\end{equation}
\item[(qs2)$_\infty$] energy balance: the map $t\mapsto \int_{\Omega}\dot{f}(t)\,d\mu(t)$ is integrable on $[0,T]$ and 
for every $t\in [0,T]$
\begin{multline}\label{0e-bal-1}
\mathcal F(\mu(t))+ \mathcal{D}(\mu,[0,t])  - \int_{\Omega}f(t)\,d\mu(t) 
= \mathcal F(\mu^0) - \int_{\Omega}f(0)\,d\mu(0) -
\int_0^{t}\int_{\Omega}\dot{f}(s)\,d\mu(s)\,ds.
\end{multline}
\end{itemize}
\end{enumerate}
\end{theorem}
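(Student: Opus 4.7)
From hypothesis \eqref{e-data}, the regularity of $f$, and a uniform lower bound $\mathcal{F}_n\geq -C$ for $n$ large (consequence of the $\Gamma$-convergence of Theorem~\ref{ups:density} together with the a priori estimate \eqref{lower-bound:calFn}), the energy balance (qs2)$_n$ evaluated at $t=T$ yields a uniform bound $\mathcal{D}(\mu_n,[0,T])\leq C$. Since $d_1\leq d$ by Lemma~\ref{lemma:props-d}, the $d_1$-variation of $t\mapsto\mu_n(t)$ is bounded by $C$, and a standard Helly selection (Theorem~\ref{thm:Helly}) extracts a subsequence, still denoted $\mu_n$, with $\mu_n(t)\weakto\mu(t)$ for every $t\in[0,T]$. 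The vertical marginal is preserved along each discrete evolution (Remark~\ref{rmk:proj}), so $\mu_n(t)\in Y_n(\gamma,c)$ for every $n$ and $t$; hence the limit curve takes values in $\mathcal{P}^\infty_{\gamma,c}(\Omega)$ and satisfies $(\pi_2)_\#\mu(t)=(\pi_2)_\#\mu^0$ for every $t$.

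\textbf{Global stability at the limit.} Fix $t\in[0,T]$ and a competitor $\nu\in\P(\Omega)$. If $(\pi_2)_\#\nu\neq(\pi_2)_\#\mu(t)$ or $\mathcal{F}(\nu)=+\infty$, then \eqref{0min-1} is trivial. Otherwise, since $\mu(t)\in\mathcal{P}^\infty_{\gamma,c}(\Omega)$ is approximated by $\mu_n(t)\in Y_n(\gamma,c)$, Theorem~\ref{thm:joint} provides a joint recovery sequence $\nu_n\in Y_n(\gamma,c)\subset X_n$ with $\nu_n\weakto\nu$, $\limsup_n\mathcal{F}_n(\nu_n)\leq\mathcal{F}(\nu)$, and $\limsup_n d(\mu_n(t),\nu_n)\leq d(\mu(t),\nu)$. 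Testing (qs1)$_n$ against $\nu_n$ and passing to the limit then yields \eqref{0min-1}: the $\Gamma$-liminf inequality of Theorem~\ref{ups:density} combined with narrow continuity of $\mu\mapsto\int f(t)\,d\mu$ controls the left side, while the three joint-recovery bounds control the right side.

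\textbf{Energy balance.} The $\leq$ direction of \eqref{0e-bal-1} follows by taking $\liminf$ in (qs2)$_n$: the $\Gamma$-liminf of Theorem~\ref{ups:density} handles $\mathcal{F}_n(\mu_n(t))$, the lower semicontinuity of $\mathcal{D}$ under pointwise narrow convergence (Lemma~\ref{lemma:props-d}) handles the dissipation, hypothesis \eqref{e-data} handles the initial energy, and dominated convergence (justified by uniform boundedness of $\dot f$ and pointwise narrow convergence of $\mu_n(s)$ on $[0,T]$) handles the power integral. The reverse inequality is obtained by the classical Mielke argument: for any partition $0=t_0<\cdots<t_M=t$, apply (qs1)$_\infty$ at $t_{i-1}$ with the admissible competitor $\mu(t_i)$ (admissible because the two measures share the same vertical marginal), telescope the resulting inequalities, use $f(t_i)-f(t_{i-1})=\int_{t_{i-1}}^{t_i}\dot f\,ds$, and refine the partition. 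The resulting Riemann sums converge to $\int_0^t\int_\Omega\dot f\,d\mu\,ds$ since $\mathcal{D}(\mu,[0,T])<+\infty$ implies that $t\mapsto\mu(t)$ is continuous off an at most countable set, and $\sum_i d(\mu(t_i),\mu(t_{i-1}))$ converges to $\mathcal{D}(\mu,[0,t])$ by definition of the total variation.

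\textbf{Main obstacle.} The substantial technical content of the argument is already packaged in Theorem~\ref{thm:joint} (joint recovery) and Lemma~\ref{lemma:char-Pinfty} (identification of $\mathcal{P}^\infty_{\gamma,c}(\Omega)$). Given these tools, the most delicate point left for the present proof is the persistence of the admissibility class $Y_n(\gamma,c)$ along the evolution, which reduces to the vertical-marginal preservation of Remark~\ref{rmk:proj} and is what makes Theorem~\ref{thm:joint} applicable at every time $t$. The compactness step relies crucially on the comparison $d_1\leq d$ in order to borrow pre-compactness from the complete metric space $(\P(\Omega),d_1)$, since $d$ itself takes the value $+\infty$ between measures with distinct vertical marginals and therefore does not metrize a topology on $\P(\Omega)$.
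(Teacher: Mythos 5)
Your proof is correct and follows essentially the same architecture as the paper's: uniform dissipation bound via (qs2)$_n$ and a lower bound on $\mathcal F_n$, Helly selection (Theorem~\ref{thm:Helly}), preservation of $Y_n(\gamma,c)$ via preservation of vertical marginals, the joint recovery sequence of Theorem~\ref{thm:joint} to pass to the limit in global stability, and the standard two-sided argument for energy balance. The only genuine difference is in justifying the uniform lower bound on $\mathcal F_n$: you invoke $\Gamma$-convergence plus \eqref{lower-bound:calFn} together with narrow compactness of $\mathcal P(\Omega)$ (an equi-coercivity argument), whereas the paper reproves the bound explicitly by reusing the preliminary estimates \eqref{Bclaim}, \eqref{Dclaim}, Lemma~\ref{lemma:u-est} and Remark~\ref{rmk:bb}, noting that the $o(1)$ error there is uniform in $t$. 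Both work; yours is shorter but relies on the (easily checked, but worth a sentence) fact that the liminf inequality of Theorem~\ref{ups:density} plus compactness rules out $\inf\mathcal F_n\to-\infty$. In the reverse energy inequality, one small overclaim: you do not need, and in general need not have, $\sum_i d(\mu(t_i),\mu(t_{i-1}))\to\mathcal D(\mu,[0,t])$ along partitions with vanishing mesh; the proof only uses $\sum_i d(\mu(t_i),\mu(t_{i-1}))\le\mathcal D(\mu,[0,t])$, which is immediate from the definition \eqref{e:var-diss}. The paper leaves the reverse inequality to a single sentence, so your spelled-out version is a useful complement.
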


\begin{remark}\label{rmk:aid}
The existence of an admissible sequence of initial data satisfying all the assumptions of Theorem~\ref{th:conv_evolutions}
is guaranteed by Theorems~\ref{ups:density} and~\ref{thm:joint}. 
It can be constructed as follows.
Let $\hat\mu^0\in \P^\infty_{\gamma,c}(\Omega)$ be such that
$\mathcal F(\hat\mu^0)<+\infty$. By definition of $\P^\infty_{\gamma,c}(\Omega)$ there exist 
$\hat\mu^0_n\weakto\hat\mu^0$ narrowly, as $n\to\infty$, such that $\hat\mu^0_n\in Y_n(\gamma,c)$ for every $n$.
By applying Theorem~\ref{thm:joint} to $\mu=\nu=\hat\mu^0$ and $\mu_n=\hat\mu^0_n$
we can replace the sequence $(\hat\mu^0_n)$ with a new sequence on which the renormalized energies $\mathcal F_n$ converge to
$\mathcal F(\hat\mu^0)$. In other words, up to replacing $(\hat\mu^0_n)$ with this new sequence,
we can assume that $\mathcal F_n(\hat\mu^0_n)\leq C$ for every $n$.
By Lemma~\ref{lemma:lscQe-ex} the minimum problems
$$
\min_{\mu\in X_n} \left\{\mathcal F_n(\mu)  + d(\mu, \hat\mu^0_n) - \int_\Omega f(0)\,d\mu\right\}
$$
have a solution $\mu^0_n$ with finite energy. In particular, $d(\mu^0_n, \hat\mu^0_n)<\infty$ for every $n$,
which implies $(\pi_2)_\#\mu^0_n=(\pi_2)_\#\hat\mu^0_n$ for every $n$. Thus, $\mu^0_n\in Y_n(\gamma,c)$ for every $n$.
By the triangle inequality $\mu_n^0$ satisfies the stability condition \eqref{0sc}. Moreover, up to subsequences, 
there exists $\mu^0\in{\mathcal P}(\Omega)$ such that $\mu_n^0\weakto\mu^0$
narrowly, as $n\to\infty$. From the minimality we also deduce that
$$
\mathcal F_n(\mu_n^0)  + d(\mu_n^0, \hat\mu^0_n) - \int_\Omega f(0)\,d\mu_n^0
\leq \mathcal F_n(\hat \mu_n^0)  - \int_\Omega f(0)\,d\hat\mu_n^0,
$$
hence $\mathcal F_n(\mu^0_n)\leq C$ for every $n$ and from the liminf inequality of Theorem~\ref{ups:density}
we deduce that
$\mathcal F(\mu^0)<+\infty$.
It remains to show \eqref{e-data}. By Theorem~\ref{thm:joint} there exists a sequence $(\nu_n^0)$ 
such that $\nu^0_n\in X_n$ for every $n$, $\nu_n^0\weakto\mu^0$ narrowly, as $n\to\infty$, and
\begin{equation}\label{app-data}
\lim_{n\to\infty} d(\mu^0_n,\nu^0_n)=0, \qquad \limsup_{n\to\infty} \mathcal F_n(\nu_n^0)\leq \mathcal F(\mu^0).
\end{equation}
From the stability condition \eqref{0sc} we have
$$
\mathcal F_n(\mu_n^0) - \int_{\Omega}f(0)\,d\mu_n^0 
\leq \mathcal F_n(\nu_n^0)+d\big(\nu_n^0, \mu_n^0\big) - \int_{\Omega}f(0)\,d\nu_n^0.
$$
Owing to the liminf inequality of Theorem~\ref{ups:density} and to \eqref{app-data},
we can pass to the limit in the above expression and get \eqref{e-data}.
\end{remark}

\begin{remark}\label{rmk:projL}
Note that if $t\mapsto\mu(t)$ is a quasi-static evolution, then
\begin{equation}\label{cmar}
(\pi_2)_\#\mu(t)=(\pi_2)_\#\mu(0) \quad \text{for every } t\in[0,T].
\end{equation}
Indeed, owing to the regularity assumptions on $f$ and to \eqref{lower-bound:calFn}, 
condition (qs2)$_\infty$ implies that $\mathcal{D}(\mu,[0,T])<+\infty$, which in turn gives \eqref{cmar} by the definition of the dissipation distance $d$.
\end{remark}

\begin{proof}[Proof of Theorem~\ref{th:conv_evolutions}]
\textbf{Compactness.} 
Using the regularity assumption on $f$ and assumption \eqref{e-data} on the initial data,
we infer from the energy balance (qs2)$_n$
\begin{equation}\label{almob}
\mathcal{F}_n(\mu_n(t)) + \mathcal{D}(\mu_n,[0,t])
\leq  C
\end{equation}
for every $n$ and every $t\in[0,T]$.
On the other hand, the energy estimates \eqref{Bclaim} and \eqref{Dclaim} in the proof of Theorem~\ref{ups:density} yield
\begin{multline*}
\mathcal{F}_n(\mu_n(t)) = \frac12
\iint_{\Omega\times\Omega}V(y,z)\, d(\mu_n(t)\boxtimes\mu_n(t))(y,z)
\\
+\frac{1}{2n} \sum_{i=1}^{n}\int_{\partial\Omega}\C K^n(x;z_i^n(t))\nu\cdot  u_{\mu_n}(t) \, d\HH^1(x)
+o(1),
\end{multline*}
where $o(1)$ is a quantity tending to $0$, as $n\to\infty$, uniformly in $t$. By Lemma~\ref{lemma:u-est} and Remark~\ref{rmk:bb}
we deduce that
$$
\mathcal{F}_n(\mu_n(t)) \geq -C
$$
for every $n$ and every $t$. Combining this inequality with \eqref{almob}, we obtain
\begin{equation}\label{MG-est2}
\mathcal{D}(\mu_n,[0,T])\leq C.
\end{equation}
In particular, this implies that $d(\mu_n(t),\mu_n^0)<\infty$ for every $t\in[0,T]$ and every $n$
and, in turn, that $(\pi_2)_\#\mu_n(t)=(\pi_2)_\#\mu_n^0$ for every $t\in[0,T]$ and every $n$.
Since $\mu_n^0\in Y_n(\gamma,c)$ by assumption, we conclude that $\mu_n(t)\in Y_n(\gamma,c)$ for every $t\in[0,T]$ and every $n$.

By Theorem~\ref{thm:Helly} and \eqref{MG-est2} we can guarantee that
there exists a map $t\mapsto\mu(t)$ from $[0,T]$ into $\mathcal{P}(\Omega)$
such that, up to a subsequence,
$$
\mu_n(t)\weakto \mu(t)\quad \text{ narrowly,}
$$
as $n\to\infty$, for every $t\in[0,T]$. By definition of $\mathcal{P}_{\gamma,c}^{\infty}(\Omega)$ we clearly have that $\mu(t)\in \mathcal{P}_{\gamma,c}^{\infty}(\Omega)$ for every $t\in[0,T]$.
Moreover, by \eqref{almob} and Theorem~\ref{ups:density} we have
\begin{equation}\label{inflim}
\mathcal F(\mu(t))\leq \liminf_{n\to\infty}\mathcal F_n(\mu_n(t))\leq C
\end{equation}
for every $t\in[0,T]$; in other words, $\mathcal F(\mu(t))<\infty$ for every $t\in[0,T]$.
\medskip

\noindent
\textbf{Convergence.} We now prove condition (qs1)$_\infty$.
Let us fix $t\in[0,T]$ and let $\nu\in \P(\Omega)$. Clearly, it is enough to prove \eqref{0min-1} for
$\nu\in \P(\Omega)$ such that $\mathcal F(\nu)<\infty$ and $d(\nu,\mu(t))<\infty$. This last condition implies, in particular, that
$(\pi_2)_\#\nu = (\pi_2)_\#\mu(t)$. By Theorem~\ref{thm:joint} there exists a sequence $(\nu_n)$ such that 
$\nu_n\in Y_n(\gamma,c)$ for every $n$, $\nu_n\weakto \nu$ narrowly, as $n\to \infty$, and
\begin{equation}\label{rq1}
\limsup_{n\to\infty}d(\nu_n,\mu_n(t)) \leq d(\nu,\mu(t)) \quad\textrm{and} \quad \limsup_{n\to\infty}\mathcal F_n(\nu_n) \leq \mathcal F(\nu).
\end{equation}
Since $\nu_n\in X_n$ for every $n$, the minimality condition (qs1)$_n$ implies
$$
\mathcal F_n(\mu_n(t)) - \int_{\Omega}f(t)\,d\mu_n(t) 
\leq d\big(\nu_n, \mu_n(t)\big) +\mathcal F_n(\nu_n) - \int_{\Omega}f(t)\,d\nu_n.
$$
Combining \eqref{inflim} and \eqref{rq1}, we can pass to the limit in this inequality and obtain \eqref{0min-1}.

By \eqref{inflim}, the lower semicontinuity of the dissipation, and \eqref{e-data}, we can now pass to the limit in
the energy balance (qs2)$_n$ and prove an energy inequality. The converse energy inequality can be deduced 
from the global stability.
\end{proof}

%%%%%%%%%%%%%%%%%%%%%%%%%%%%%%%%%%%%%%%%%%%%%%%%%%%%%%%%%%%%%%%
%%%%%%%%%%%%%%%%%%%%%%%%%%%%%%%%%%%%%%%%%%%%%%%%%%%%%%%%%%%%%%%

\section{Strong formulation of the evolution problems}
\label{sec:strong-solutions}
In this section we derive the strong formulation of the quasi-static evolution for the discrete renormalised energy~\eqref{def:curly-F-intro} and for the limit energy \eqref{def:limitF}, under the assumption that they are sufficiently regular for the various arguments to be valid.

\subsection{Discrete energy}\label{subs:dis}
Our starting point is the formulation \eqref{e-stab}--\eqref{e-bal} of the quasi-static evolution for the renormalised energy.
For simplicity of notation, since $\mu(t)=\frac1n\sum_{i=1}^n\delta_{z_i(t)}$ we write
$$
\mathcal{F}_n(\mu(t))=\frac1n\int_\Omega W_n(x, z_1(t), \dots, z_n(t))\, dx.
$$
We note that the force term reduces to
$$
\int_{\Omega}f(t) \,d\mu(t) = \frac1n\sum_{i=1}^nf(t,z_i(t)).
$$
We assume for simplicity that $\supp\,\mu(t) \subset \textrm{int}\,{\mathcal R}$ and $|z_i(t)-z_j(t)|>r_n$, $i\neq j$, for every $t\in[0,T]$.
In (qs1)$_n$ we consider the measure $\frac1n\sum_{i=1}^n \delta_{z_i(t)+\eta_ie_1}$ as a competitor of $\mu(t)$, with $|\eta_i|$ small enough,
and we obtain
\begin{multline*}
\frac1n\int_\Omega W_n(x,z_1(t), \dots, z_n(t))\, dx - \frac1n\sum_{i=1}^n f(t,z_i(t)) 
\\
\leq \frac1n\int_\Omega W_n(x,z_1(t)+\eta_1e_1, \dots, z_n(t)+\eta_ne_1)\, dx+\frac1n\sum_{i=1}^n|\eta_i| - \frac1n\sum_{i=1}^n f(t,z_i(t)+\eta_ie_1).
\end{multline*}
Choosing $\eta_i>0$ and $\eta_j=0$ for $i\neq j$, we can divide by $\eta_i$ and pass to the limit as $\eta_i\to0$, and obtain
\begin{equation}
\label{ineq:strong1}
-\int_\Omega \partial_{z_i}\!W_n(x,z_1(t),\dots,z_n(t))\cdot e_1\, dx
+ \partial_{z_i}f(t,z_i(t))\cdot e_1 \leq 1.
\end{equation}
Repeating the same argument for $\eta_i<0$ we have
$$ 
-\int_\Omega \partial_{z_i}\!W_n(x,z_1(t),\dots,z_n(t))\cdot e_1\, dx
+ \partial_{z_i}f(t,z_i(t))\cdot e_1 \geq -1.
$$
Therefore, we deduce
\begin{equation}\label{stab}
\Big|-\int_\Omega \partial_{z_i}\!W_n(x,z_1(t),\dots,z_n(t))\cdot e_1\, dx
+ \partial_{z_i}f(t,z_i(t))\cdot e_1 \Big|
\leq 1
\end{equation} 
for every $i=1,\dots,n$.

Assume now that $\mu(t)$ varies smoothly with respect to time, so that
\begin{equation}
\label{eq:D-discrete-strong}
\mathcal{D}(\mu,[0,t])=\frac1n\sum_{i=1}^n\int_0^t  |\dot z_i(s)\cdot e_1|\, ds.
\end{equation}
By \eqref{cmar0} we also have that
\begin{equation}\label{smoothcmar0}
\dot z_i(t)\cdot e_2=0
\end{equation}
for every $i=1,\dots,n$.
Differentiating (qs2)$_n$ with respect to time, we obtain
$$
\sum_{i=1}^n \dot z_i(t) \cdot \left(-\int_\Omega\partial_{z_i}\!W_n(x,z_1(t),\dots,z_n(t))\, dx  + \partial_{z_i}f(t,z_i(t)) \right) =
\sum_{i=1}^n |\dot z_i(t)\cdot e_1|.
$$
In view of \eqref{stab} and \eqref{smoothcmar0}, this is equivalent to
\begin{equation}\label{flow}
\left(-\int_\Omega\partial_{z_i}\!W_n(x,z_1(t),\dots,z_n(t))\cdot e_1\, dx  + 
\partial_{z_1}f(t,z_i(t))\cdot e_1 \right) (\dot z_i(t)\cdot e_1) = |\dot z_i(t)\cdot e_1|
\end{equation} 
for every $i=1,\dots,n$.

The flow rule \eqref{flow} is rate-independent. Moreover, together with \eqref{stab}, it implies that
when the inequality in \eqref{stab} is strict for some $i$, then it must be $\dot z_i\cdot e_1=0$.
When \eqref{stab} holds with the equality for some $i$, then $\dot z_i\cdot e_1$ may be different from zero and
satisfies
$$
\dot z_i(t)\cdot e_1 =\lambda   \left(-\int_\Omega\partial_{z_i}\!W_n(x,z_1(t),\dots,z_n(t))\cdot e_1\, dx  + 
\partial_{z_1}f(t,z_i(t))\cdot e_1 \right)
$$
for some $\lambda\geq0$.

\begin{remark}
The formal derivation performed above can be justified rigorously under  higher regularity of the forcing term, namely if $f\in W^{1,1}(0,T;C^1(\Omega))$, and if $W_n$ is continuously differentiable 
with respect to the points in the support of $\mu$. This latter condition depends on the regularity of the functions $z\mapsto K_z$.
\end{remark}

\subsection{Limiting energy}
The limiting quasi-static evolution is in this case given by \eqref{0min-1}--\eqref{0e-bal-1}, where we set
$$
\widetilde{\mathcal F}(\mu,t):= {\mathcal F}(\mu)-\int_\Omega f(t)\, d\mu.
$$
We now follow the same arguments as in Section~\ref{subs:dis}, but for the energy $\widetilde{\mathcal F}$.
We assume for simplicity that $\supp\,\mu(t) \subset \textrm{int}\,{\mathcal R}$ for every $t\in[0,T]$.
We choose $\varphi\in C_c^\infty(\Omega)$, and for $\eta>0$ small enough we define perturbations $\mu^\eta$ of $\mu(t)$ by (writing $x=(x_1,x_2)$ for an element of $\Omega$)
$$
\mu^\eta = T^\eta _{\#} \mu(t), 
  \qquad T^\eta(x) := \bigl(x_1+\eta\varphi(x),x_2\bigr).
$$
Note that if $\eta$ is small enough, then $T^\eta$ is a smooth one-to-one map from $\Omega$ to $\Omega$. Since the map $x \mapsto T^\eta(x)$ is an admissible transport map, we have 
$$
d(\mu^\eta,\mu(t))\leq \int_\Omega |x_1-(x_1+\eta\varphi(x))| \, d\mu(t)(x)
 = \eta \int_\Omega |\varphi(x)| \,d\mu(t)(x).
$$
To estimate the effect of this perturbation on $\widetilde{\mathcal F}$, we note that $\partial_\eta\mu^\eta = -\partial_{x_1} (\varphi\mu^\eta)$ at $\eta=0$, and calculate
\begin{equation}
\label{eq:calc-deriv-transport}
\lim_{\eta\to0} \frac1\eta\bigl[ 
  \widetilde{\mathcal F}(\mu^\eta,t)-\widetilde{\mathcal F}(\mu(t),t)\bigr]\Big|_{\eta=0}
  = \int_\Omega \frac{\delta \widetilde{\mathcal F}}{\delta \mu}(\mu(t),t) \, \partial_\eta \mu^\eta\big|_{\eta=0} 
  = \int_\Omega\varphi \; \partial_{x_1}\frac{\delta \widetilde{\mathcal F}}{\delta \mu}(\mu(t),t) \,d\mu(t),
\end{equation}
where 
\begin{equation}\label{var:derivative}
\frac{\delta \widetilde{\mathcal F}}{\delta \mu}(\mu,t)(x) = \int_\Omega V(x,y)\, d\mu(y) 
+ \frac12 \int_{\partial\Omega } \C K(x;y)\nu(y)\cdot v_\mu(y)\,d\HH^1(y)
- f(x,t).
\end{equation}
The second term follows from~\eqref{def:limitF} using the general principle that if $h(s) = \inf_v g(v,s)$, with unique minimizer $v_s$ depending smoothly on~$s$, then $h'(s) = (\partial_s g)(v_s,s)$. 
\medskip

Now using $\mu^\eta$ in (qs1)$_\infty$, dividing by $\eta$ and taking the limit $\eta\to0$, we find
$$
-\int_\Omega\varphi \; \partial_{x_1}\frac{\delta \widetilde{\mathcal F}}{\delta \mu}(\mu(t),t) \,d\mu(t) 
\leq \int_\Omega |\varphi|\, d\mu(t),
$$
which is the equivalent of~\eqref{ineq:strong1}. Since $\varphi$ is arbitrary, we similarly deduce that 
\begin{equation}
\label{stab-ct}
\Big|\partial_{x_1}\frac{\delta \widetilde{\mathcal F}}{\delta \mu}(\mu(t),t)(x)\Big|
\leq 1
\qquad \text{for $\mu(t)$-a.e.\ $x\in \Omega$.}
\end{equation}
Continuing the argument from the discrete case, we assume that $\mu$ is smooth in space and time, and we write $d\mu(t)= \mu(t,x)\,dx$. By~\eqref{s:dissipation} there exists a Borel measurable $\phi$ satisfying the equation 
\[
\partial_t \mu(t,x) +\partial_{x_1}(\phi(t,x)\mu(t,x))=0 \qquad \text{for all $t$ and for $\mu(t)$-a.e.\ $x$},
\]
such that 
\[
\mathcal{D}(\mu,[0,t])=\int_0^t \int_\Omega  |\phi(s,x)|\,\mu(s,x)\,dx\,ds.
\]
%the dissipation in \eqref{e:var-diss} can be written as  
%\begin{equation}\label{s:dissipation}
%\mathcal{D}(\mu,[0,t])=\min_{\phi:[0,t]\times\Omega\to \R}\left\{\int_0^t \int_\Omega  |\phi(s,x)|\,\mu(s,x)\,dx\,ds: \partial_t \mu(t,x) +\partial_{x_1}(\phi(t,x)\mu(t,x))=0\right\}.
%\end{equation}
%
%
%Therefore, if we denote by $\phi$ the minimiser in \eqref{s:dissipation}, then 
%\begin{equation}
%\mathcal{D}(\mu,[0,t])=\int_0^t \int_\Omega  |\phi(s,x)|\,\mu(s,x)\,dx\,ds,
%\end{equation}
%and $\phi$ satisfies the equation 
%\[
%\partial_t \mu(t,x) +\partial_{x_1}(\phi(t,x)\mu(t,x))=0 \qquad \text{for all $t$ and for $\mu(t)$-a.e.\ $x$}.
%\]
Differentiating (qs2)$_\infty$ in time and calculating as in~\eqref{eq:calc-deriv-transport} we find
\[
\int_\Omega \phi(t,x)\, \partial_{x_1}\frac{\delta\widetilde{\mathcal F}}{\delta\mu}(\mu(t),t)\, \mu(t,x)\,dx 
+ \int_\Omega |\phi(t,x)|\, \mu(t,x)\,dx = 0.
\]

Again, in combination with~\eqref{stab-ct}, this implies
\[
- \phi(t,x)\,\partial_{x_1} \frac{\delta\widetilde{\mathcal F}}{\delta\mu}(\mu(t),t)
 = |\phi(t,x)| \qquad
 \text{for $\mu(t)$-a.e.\ $x$,}
\]
which is the counterpart of~\eqref{flow}. At $\mu(t)$-a.e.\ $x$, therefore, \emph{either} $\phi(x,t)=0$, \emph{or} the total force satisfies $-[\partial_{x_1}\delta\widetilde{\mathcal F}/{\delta\mu}](\mu(t),t)(x) = \pm 1$; in the latter case, $\phi$ points in the direction of the force.

%%%%%%%%%%%%%%%%%%%%%%%%%%%%%%%%%%%%%%%%%%%%%%%%%%%%%%%%%%%%%%%
%%%%%%%%%%%%%%%%%%%%%%%%%%%%%%%%%%%%%%%%%%%%%%%%%%%%%%%%%%%%%%%

\section{Appendix}
In this appendix we state and prove an extension result (Theorem~\ref{thm:extension}), and a Helly-type theorem. 

\begin{theorem}\label{thm:extension}
Let $\Omega$ be an open set in $\R^2$.
Let $\delta$ and let $z_1,\dots,z_n$, $n\in\N$, be such that $|z_j-z_k|\geq 4\delta$ for every $j\neq k$
and $\dist(z_i,\partial\Omega)\geq 2\delta$ for every $i$. Then there exists a positive constant $C$, independent of $\delta$, of the $z_i$'s and of $n$, with the following property: for every $u\in H^1(\Omega\setminus \cup_{i=1}^n \overline B_\delta(z_i);\R^2)$ there exists an extension $\tilde u\in H^1(\Omega;\R^2)$ of $u$ such that
\begin{equation}\label{app:ext}
\|\sym\nabla \tilde u\|_{L^2(\Omega)}\leq C \|\sym\nabla u\|_{L^2(\Omega\setminus \cup_{i=1}^n \overline B_\delta(z_i))}.
\end{equation}
\end{theorem}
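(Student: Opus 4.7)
The plan is to perform the extension locally, one hole at a time. Set $\tilde u := u$ on $\Omega\setminus\bigcup_{i=1}^n\overline B_\delta(z_i)$, and on each ball $B_\delta(z_i)$ define $\tilde u$ by a three-step procedure on the annulus $A_i := B_{2\delta}(z_i)\setminus \overline B_\delta(z_i)$: first pick an infinitesimal rigid motion $r_i(x)=a_i+W_i(x-z_i)$, with $W_i$ skew, given by Korn's inequality applied to $u$ on $A_i$; then extend the adjusted function $u-r_i$ from $A_i$ into $B_\delta(z_i)$ via a standard bounded $H^1$-extension operator; and finally add $r_i$ back. Since $r_i$ carries no symmetric gradient, $\sym\nabla\tilde u$ on $B_\delta(z_i)$ coincides with the symmetric gradient of the extended remainder. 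The annuli $A_i$ are pairwise disjoint and contained in $\Omega\setminus\bigcup_j\overline B_\delta(z_j)$ by the hypotheses $|z_j-z_k|\ge 4\delta$ and $\dist(z_i,\partial\Omega)\ge 2\delta$, so the construction is coherent and the contributions can be summed without overlap.

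The delicate point is to ensure that the constant depends on none of $\delta$, $n$, or the configuration $(z_i)$. I would achieve $\delta$-independence by rescaling each annulus to the model annulus $\hat A := B_2\setminus\overline B_1$, setting $\hat u(y) := u(z_i+\delta y)$. On $\hat A$, Korn's inequality in its quotient form provides an infinitesimal rigid motion $\hat r$ with
\begin{equation*}
\|\hat u-\hat r\|_{L^2(\hat A)} + \|\nabla(\hat u-\hat r)\|_{L^2(\hat A)} \le C_K\,\|\sym\nabla\hat u\|_{L^2(\hat A)},
\end{equation*}
and a fixed bounded linear operator $E\colon H^1(\hat A;\R^2)\to H^1(B_2;\R^2)$ extends functions from $\hat A$ into the full ball with norm $\|E\|$ independent of everything. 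In two dimensions the $L^2$-norm of the gradient is invariant under isotropic dilations while the plain $L^2$-norm picks up a factor $\delta^{-1}$; unwinding the rescaling turns the Korn estimate into
\begin{equation*}
\delta^{-2}\|u-r_i\|^2_{L^2(A_i)} + \|\nabla(u-r_i)\|^2_{L^2(A_i)} \le C_K^2\,\|\sym\nabla u\|^2_{L^2(A_i)}.
\end{equation*}
The two scalings balance exactly: defining $\tilde w_i(x):=E(\widehat{u-r_i})((x-z_i)/\delta)$ on $B_\delta(z_i)$, one gets
\begin{equation*}
\|\nabla\tilde w_i\|_{L^2(B_\delta(z_i))} = \|\nabla E(\widehat{u-r_i})\|_{L^2(B_1)} \le \|E\|\,\|\widehat{u-r_i}\|_{H^1(\hat A)} \le C\,\|\sym\nabla u\|_{L^2(A_i)},
\end{equation*}
with $C$ depending only on $C_K$ and $\|E\|$.

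The final step is to verify that $\tilde u\in H^1(\Omega;\R^2)$ and to assemble the estimate. Traces on $\partial B_\delta(z_i)$ from inside and outside agree automatically: $E(\widehat{u-r_i})\in H^1(B_2)$ has a single trace on $\partial B_1$, while on $A_i$ the rescaled extension equals $u-r_i$, so $\tilde u$ equals $u$ on $A_i$. Since $\sym\nabla r_i=0$ and the $A_i$ are disjoint,
\begin{equation*}
\|\sym\nabla\tilde u\|^2_{L^2(\Omega)} \le \|\sym\nabla u\|^2_{L^2(\Omega\setminus\bigcup_i\overline B_\delta(z_i))} + \sum_{i=1}^n \|\nabla\tilde w_i\|^2_{L^2(B_\delta(z_i))} \le (1+C^2)\,\|\sym\nabla u\|^2_{L^2(\Omega\setminus\bigcup_i\overline B_\delta(z_i))},
\end{equation*}
which is \eqref{app:ext}.

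The main obstacle I expect is precisely the uniformity of $C$ across the three parameters $\delta$, $n$, and the positions $z_i$ simultaneously. A naive global construction would degrade as the holes proliferate or cluster near $\partial\Omega$, and any approach controlling the full $H^1$-norm in place of $\|\sym\nabla\cdot\|_{L^2}$ would pay $\delta$-dependent constants through a Poincaré step. The remedy is twofold: perform the Korn estimate on a scale-invariant model domain, and keep the construction strictly local so that the geometric separation $|z_j-z_k|\ge 4\delta$ and $\dist(z_i,\partial\Omega)\ge 2\delta$ decouples the contributions of different holes.
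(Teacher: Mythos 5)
Your proof is correct and takes essentially the same route as the paper: the paper likewise extends locally on each annulus $B_{2\delta}(z_i)\setminus\overline B_\delta(z_i)$ with a symmetric-gradient estimate whose constant is fixed by a scaling argument, then glues and sums using the disjointness of the annuli. The only difference is that the paper cites the local extension-with-symmetric-gradient-control as a known lemma (from Oleinik--Shamaev--Yosifian), whereas you reprove it from scratch via Korn's inequality on the model annulus, subtraction of the optimal infinitesimal rigid motion, and a standard bounded $H^1$ extension operator --- a standard and correct derivation of the same fact.
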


\begin{proof}
Let $u\in H^1(\Omega\setminus \cup_{i=1}^n \overline B_\delta(z_i);\R^2)$.
By \cite[Lemma~4.1]{OSY} for every $i=1,\dots, n$ there exists a function $\tilde u^i\in H^1(B_{2\delta}(z_i))$ such that $\tilde u^i=u$ on $B_{2\delta}(z_i)\setminus \overline B_\delta(z_i)$ and
\begin{equation}\label{ext1}
\|E\tilde u^i\|_{L^2(B_{2\delta}(z_i))}\leq C_1 \|Eu\|_{L^2(B_{2\delta}(z_i)\setminus \overline B_\delta(z_i))},
\end{equation}
where the constant $C_1$ is independent of $u$. By a scaling argument it is easy to see that $C_1$ is also independent of $\delta$ and of $z_i$.

It is now enough to define
$$
\tilde u:=\begin{cases}
\tilde u^i & \text{ in } B_\delta(z_i),
\\
u & \text{ otherwise.}
\end{cases}
$$
Clearly $\tilde u$ is an extension of $u$ and $\tilde u\in H^1(\Omega;\R^2)$. Moreover, \eqref{ext1} yields
\begin{eqnarray*}
\|E\tilde u\|_{L^2(\Omega)}^2 & = & \|Eu\|_{L^2(\Omega\setminus \cup_{i=1}^n \overline B_\delta(z_i))}^2
+ \sum_{i=1}^n \|E\tilde u_i\|_{L^2(B_\delta(z_i))}^2
\\
& \leq &
\|Eu\|_{L^2(\Omega\setminus \cup_{i=1}^n \overline B_\delta(z_i))}^2
+ C_1^2\sum_{i=1}^n \|E u\|_{L^2(B_{2\delta}(z_i)\setminus \overline B_\delta(z_i))}^2 
\\
& \leq & (1+C_1^2) \|Eu\|_{L^2(\Omega\setminus \cup_{i=1}^n \overline B_\delta(z_i))}^2,
\end{eqnarray*}
which proves \eqref{app:ext}.
\end{proof}

\begin{theorem}\label{thm:Helly}
Let $t\mapsto\mu_k(t)$ be a sequence of maps from $[0,T]$ into the space $\P(\Omega)$.  We assume that there exist an open set $\Omega'\subset\subset\Omega$ and  $C>0$ such that 
\begin{equation}\label{const-mass}
\supp\,\mu_k(t)\subset\Omega' 
\end{equation}
for every $t\in[0,T]$ and every $k$, and
\begin{equation}\label{bdd-var}
{\mathcal D}(\mu_k,[0,T])\leq C
\end{equation}
for every $k$. Then there exists a subsequence (not relabelled) and a map $t\mapsto\mu(t)$ from $[0,T]$
into $\P(\Omega)$, with ${\mathcal D}(\mu,[0,T])\leq C$,
such that
\begin{equation}\label{app-th}
\mu_k(t)\weakto \mu(t) \quad \text{ narrowly}
\end{equation}
for every $t\in[0,T]$.
\end{theorem}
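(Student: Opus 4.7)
\textbf{Proof plan for Theorem~\ref{thm:Helly}.} The strategy is a standard Helly-type argument adapted to the distance~$d$: first extract a subsequence on a countable dense set using tightness plus a diagonal argument, then upgrade to pointwise convergence on all of $[0,T]$ by exploiting the bound $d_1\leq d$ together with the uniform dissipation bound.

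First, introduce the cumulative dissipation
\[
V_k(t):=\mathcal D(\mu_k,[0,t]),\qquad t\in[0,T].
\]
By monotonicity of $\mathcal D$ in the second variable, each $V_k$ is non-decreasing, with $V_k(0)=0$ and $V_k(T)\leq C$ by \eqref{bdd-var}. The classical Helly selection theorem for real-valued monotone functions gives a subsequence (not relabelled) and a non-decreasing function $V:[0,T]\to[0,C]$ with $V_k(t)\to V(t)$ for every $t\in[0,T]$. Let $D\subset[0,T]$ be a countable dense subset containing $\{0,T\}$ together with the (at most countable) set of discontinuity points of $V$.

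Next, because of \eqref{const-mass}, every $\mu_k(t)$ is supported in the relatively compact set $\Omega'$, so $\{\mu_k(t):k\in\N,t\in[0,T]\}$ is tight. By a standard diagonal extraction we obtain a further subsequence (still not relabelled) and measures $\mu(s)\in\P(\Omega)$ with $\supp\mu(s)\subset\overline{\Omega'}$ such that
\[
\mu_k(s)\weakto\mu(s)\quad\text{narrowly,}\qquad\text{for every }s\in D.
\]

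The heart of the argument is to extend this convergence to every $t\in[0,T]$. By Lemma~\ref{lemma:props-d}\ref{lemma:props-d:12} together with the very definition of $\mathcal D$, for every $s\leq t$ and every $k$,
\[
d_1(\mu_k(t),\mu_k(s))\leq d(\mu_k(t),\mu_k(s))\leq V_k(t)-V_k(s).
\]
Fix $t\in[0,T]\setminus D$; then $V$ is continuous at $t$. Given $\eta>0$ choose $s\in D$ with $|s-t|$ small enough that $|V(t)-V(s)|<\eta$, and then take $k$ so large that $|V_k(t)-V(t)|+|V_k(s)-V(s)|<\eta$, so that $d_1(\mu_k(t),\mu_k(s))<2\eta$. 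For any two such indices $j,k$ we then have, by the triangle inequality for $d_1$,
\[
d_1(\mu_k(t),\mu_j(t))\leq d_1(\mu_k(t),\mu_k(s))+d_1(\mu_k(s),\mu_j(s))+d_1(\mu_j(s),\mu_j(t))<5\eta,
\]
provided $j,k$ are large enough to also make the middle term less than $\eta$ (possible since $\mu_k(s)\weakto\mu(s)$ and $d_1$ metrises narrow convergence on $\P(\overline{\Omega'})$). Hence $(\mu_k(t))_k$ is $d_1$-Cauchy. Since $(\P(\overline{\Omega'}),d_1)$ is a complete (indeed compact) metric space, there exists $\mu(t)\in\P(\Omega)$ with $\mu_k(t)\weakto\mu(t)$ narrowly, and this is~\eqref{app-th}. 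Finally, the bound $\mathcal D(\mu,[0,T])\leq C$ is immediate from Lemma~\ref{lemma:props-d}\ref{lemma:props-d:3} and \eqref{bdd-var}.

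The potential obstacle is that $d$ itself is not finite on pairs with distinct vertical marginals, so one cannot run a Cauchy/completeness argument directly in $d$. The fix is precisely to work with the weaker distance $d_1$ in the Cauchy step, using $d_1\leq d$ to still benefit from the uniform dissipation bound, and then recover the $\mathcal D$-bound in the limit only via lower semicontinuity.
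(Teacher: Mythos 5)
Your proof follows essentially the same scheme as the paper's: a Helly selection argument for the cumulative dissipation, tightness plus a diagonal extraction on a countable dense set, then a Cauchy argument in $d_1$ (using $d_1\leq d$) at continuity points, with lower semicontinuity of $\mathcal D$ giving the final bound. Two small differences in presentation are worth noting, both to your advantage: you apply Helly to $V_k$ \emph{before} choosing the dense set, which lets you include the (countably many) discontinuity points of $V$ in $D$ from the start and thereby avoid a second diagonal extraction at the end; and, more importantly, you carry out the triangle inequality entirely in $d_1$, which is the correct thing to do — the paper momentarily writes $d(\mu_{k_i}(t),\mu_{k_j}(t)) \leq 2\e + d(\mu_{k_i}(s),\mu_{k_j}(s))$, where the last term may be $+\infty$ since $\mu_{k_i}(s)$ and $\mu_{k_j}(s)$ need not share vertical marginals, so the step must in fact be read in $d_1$ as you have done. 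Both versions land on the same proof; yours is the tighter write-up.
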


\begin{proof}
Let $Q$ be a countable and dense subset of $[0,T]$. Using a diagonal argument,
assumption \eqref{const-mass} implies the existence of a subsequence $k_j\to\infty$ and a map $t\mapsto\mu(t)$
from $Q$ into $\mathcal P(\Omega)$ such that 
$$
\mu_{k_j}(t) \weakto \mu(t) \quad \text{ narrowly} 
$$
for every $t\in Q$.
We now define
$$
D_{k_j}(t):= \mathcal{D}(\mu_{k_j},[0,t]).
$$ 
The functions $D_{k_j}$ are non-decreasing with respect to time; moreover, $D_{k_j}(T)\leq C$ and $D_{k_j}(0)=0$ for every $j$. 
By Helly's Theorem we deduce that, up to subsequences, 
$$
\lim_{j\to\infty}D_{k_j}(t)=D(t) \quad \forall\, t\in [0,T],
$$
and the limit function $D(t) $ is also non decreasing in $t$ and bounded by the same constant $C$. We now define the set 
$$
\Theta:= \{0\} \cup \Big\{t\in (0,T]: \lim_{s\to t^{-}}D(s) = D(t)\Big\}.
$$
Note that, by definition, for every $0\leq s<t\leq T$ we have
\begin{equation}\label{Helly}
d(\mu_{k_j}(t),\mu_{k_j}(s)) \leq D_{k_j}(t) - D_{k_j}(s).
\end{equation}
Let $t\in \Theta$. Then, for every $\e>0$ there exists $s\in Q$ such that $s<t$ and $D(t)-D(s)<\e$. Then, by \eqref{Helly}, for large enough $j$ (note that $j=j(t,s)$)
$$
d(\mu_{k_j}(t),\mu_{k_j}(s)) < \e.
$$
Then, for every $i,j \in \N$ we have the bound
$$
d(\mu_{k_i}(t),\mu_{k_j}(t)) < 2\e + d(\mu_{k_i}(s),\mu_{k_j}(s)),
$$ 
entailing that $(\mu_{k_j}(t))$ is a Cauchy sequence in $d$, and hence in $d_1$ by Part~\ref{lemma:props-d:12} of Lemma~\ref{lemma:props-d}. Therefore, $\mu_{k_j}(t)$ converges narrowly to some limit measure $\mu(t)$ for $t\in\Theta$. Since the complement of $\Theta$ in $[0,T]$ is at most countable, up to extracting a further subsequence we have by a diagonal argument that
$\mu_{k_j}(t)$ converges narrowly to a measure $\mu(t)$ for every $t\in[0,T]$.
\end{proof}

\bigskip

\noindent
\textbf{Acknowledgements.}
M.G.M.\ acknowledges support from the European Research Council under Grant No.\ 290888
``Quasistatic and Dynamic Evolution Problems in Plasticity and Fracture'' and from GNAMPA--INdAM. 
The work of M.A.P. is supported by NWO Complexity grant 645.000.012 and NWO VICI grant 639.033.008. 
L.S. acknowledges the support of The Carnegie Trust. All authors are grateful for various enlightening discussions with Adriana Garroni, Patrick van Meurs, and Marcello Ponsiglione.
\bigskip

\bibliographystyle{alpha} 
\bibliography{Quasistatic}

\begin{thebibliography}{YGVdG04}

\bibitem[ADLGP13]{AlicandroDe-LucaGarroniPonsiglione13}
R.~Alicandro, L.~De~Luca, A.~Garroni, and M.~Ponsiglione.
\newblock Metastability and dynamics of discrete topological singularities in
  two dimensions: A {$\Gamma$}-convergence approach.
\newblock {\em Preprint}, 2013.

\bibitem[AGLG05]{GurtinAnandLele07}
L.~Anand, M.E. Gurtin, S.P. Lele, and C.~Gething.
\newblock A one-dimensional theory of strain-gradient plasticity: formulation,
  analysis, numerical results.
\newblock {\em Journal of the Mechanics and Physics of Solids},
  53(8):1789--1826, 2005.

\bibitem[BFLM14]{BlassFonsecaLeoniMorandotti}
T.~Blass, I.~Fonseca, G.~Leoni, and M.~Morandotti.
\newblock Dynamics for systems of screw dislocations.
\newblock {\em In preparation}, 2014.

\bibitem[BGGZ06]{BakoGromaGyorgyiZimanyi06}
B.~Bako, I.~Groma, G.~Gyorgyi, and G.~Zimanyi.
\newblock Dislocation patterning: The role of climb in meso-scale simulations.
\newblock {\em Computational materials science}, 38(1):22--28, 2006.

\bibitem[Cal07]{Callister07}
W.D. Callister.
\newblock {\em Materials Science and Engineering, An Introduction}.
\newblock John Wiley \& Sons, 2007.

\bibitem[CC10]{ChakravarthyCurtin10}
S.S. Chakravarthy and W.A. Curtin.
\newblock Effect of source and obstacle strengths on yield stress: A discrete
  dislocation study.
\newblock {\em Journal of the Mechanics and Physics of Solids}, 58(5):625--635,
  2010.

\bibitem[CGM13]{ContiGarroniMassaccesi13}
S.~Conti, A.~Garroni, and A.~Massaccesi.
\newblock Modeling of dislocations and relaxation of functionals on 1-currents
  with discrete multiplicity.
\newblock {\em Preprint}, 2013.

\bibitem[CL05]{CermelliLeoni06}
P.~Cermelli and G.~Leoni.
\newblock Renormalized energy and forces on dislocations.
\newblock {\em SIAM J. Math. Anal.}, 37(4):1131--1160, 2005.

\bibitem[DLGP12]{DeLucaGarroniPonsiglione12}
L.~De~Luca, A.~Garroni, and M.~Ponsiglione.
\newblock {$\Gamma$}-convergence analysis of systems of edge dislocations: The
  self energy regime.
\newblock {\em Archive for Rational Mechanics and Analysis}, 206(3):885--910,
  2012.

\bibitem[DNVdG05]{DeshpandeNeedlemanVan-der-Giessen05}
V.S. Deshpande, A.~Needleman, and E.~Van~der Giessen.
\newblock Plasticity size effects in tension and compression of single
  crystals.
\newblock {\em Journal of the Mechanics and Physics of Solids},
  53(12):2661--2691, 2005.

\bibitem[GA05]{GurtinAnand05}
M.E. Gurtin and L.~Anand.
\newblock A theory of strain-gradient plasticity for isotropic, plastically
  irrotational materials. part i: Small deformations.
\newblock {\em Journal of the Mechanics and Physics of Solids},
  53(7):1624--1649, 2005.

\bibitem[GB99]{GromaBalogh99}
I.~Groma and P.~Balogh.
\newblock {Investigation of dislocation pattern formation in a two-dimensional
  self-consistent field approximation}.
\newblock {\em Acta Materialia}, 47(13):3647--3654, 1999.

\bibitem[GCZ03]{GromaCsikorZaiser03}
I.~Groma, F.F. Csikor, and M.~Zaiser.
\newblock {Spatial correlations and higher-order gradient terms in a continuum
  description of dislocation dynamics}.
\newblock {\em Acta Materialia}, 51(5):1271--1281, 2003.

\bibitem[GGI10]{GromaGyorgyiIspanovity10}
I.~Groma, G.~Gy{\"o}rgyi, and P.D. Isp{\'a}novity.
\newblock Variational approach in dislocation theory.
\newblock {\em Philosophical Magazine}, 90(27-28):3679--3695, 2010.

\bibitem[GGK07]{GromaGyorgyiKocsis07}
I.~Groma, G.~Gy{\"o}rgyi, and B.~Kocsis.
\newblock Dynamics of coarse grained dislocation densities from an effective
  free energy.
\newblock {\em Philosophical Magazine}, 87(8-9):1185--1199, 2007.

\bibitem[GLP10]{GarroniLeoniPonsiglione10}
A.~Garroni, G.~Leoni, and M.~Ponsiglione.
\newblock {Gradient theory for plasticity via homogenization of discrete
  dislocations}.
\newblock {\em Journal of the European Mathematical Society}, 12:1231--1266,
  2010.

\bibitem[HL82]{HirthLothe82}
J.P. Hirth and J.~Lothe.
\newblock {\em {Theory of dislocations}}.
\newblock Wiley, 1982.

\bibitem[Kr{\"o}81]{Kroener81}
E.~Kr{\"o}ner.
\newblock {Continuum theory of defects}.
\newblock {\em Physics of defects}, 35:217--315, 1981.

\bibitem[Mie05]{Mielke05a}
A.~Mielke.
\newblock {\em Handbook of Differential Equations: Evolutionary Differential
  Equations}, chapter {Evolution in rate-independent systems}, pages 461--559.
\newblock North-Holland, 2005.

\bibitem[MM05]{MainikMielke}
A.~Mainik and A.~Mielke.
\newblock Existence results for energetic models for rate-independent systems.
\newblock {\em Calculus of Variations and Partial Differential Equations},
  22(1):73--99, 2005.

\bibitem[MRS08]{MielkeRoubicekStefanelli08}
A.~Mielke, T.~Roub{\'\i}{\v{c}}ek, and U.~Stefanelli.
\newblock {$\Gamma$-limits and relaxations for rate-independent evolutionary
  problems}.
\newblock {\em Calculus of Variations and Partial Differential Equations},
  31(3):387--416, 2008.

\bibitem[MRS09]{MielkeRossiSavare09}
A.~Mielke, R.~Rossi, and G.~Savar{\'e}.
\newblock Modeling solutions with jumps for rate-independent systems on metric
  spaces.
\newblock {\em Discrete and Continuous Dynamical Systems A}, 25(2), 2009.

\bibitem[MRS12]{MielkeRossiSavare12a}
A.~Mielke, R.~Rossi, and G.~Savar{\'e}.
\newblock Bv solutions and viscosity approximations of rate-independent
  systems.
\newblock {\em ESAIM: Control, Optimisation and Calculus of Variations},
  18(01):36--80, 2012.

\bibitem[OP82]{OrtizPopov82}
M~Ortiz and EP~Popov.
\newblock A statistical theory of polycrystalline plasticity.
\newblock {\em Proceedings of the Royal Society of London. A. Mathematical and
  Physical Sciences}, 379(1777):439--458, 1982.

\bibitem[Oro40]{Orowan40}
E.~Orowan.
\newblock {Problems of plastic gliding}.
\newblock {\em Proceedings of the Physical Society}, 52:8--22, 1940.

\bibitem[OSY92]{OSY}
O.A. Oleinik, A.S. Shamaev, and G.A. Yosifian.
\newblock {\em Mathematical Problems in Elasticity and Homogenization}.
\newblock North-Holland Publishing Co., Amsterdam, 1992.

\bibitem[TS76]{TeodosiuSidoroff76}
C~Teodosiu and F~Sidoroff.
\newblock A theory of finite elastoviscoplasticity of single crystals.
\newblock {\em International Journal of Engineering Science}, 14(2):165--176,
  1976.

\bibitem[VdGN95]{VanderGiessenNeedleman95}
E.~Van~der Giessen and A.~Needleman.
\newblock {Discrete dislocation plasticity: a simple planar model}.
\newblock {\em Modelling and Simulation in Materials Science and Engineering},
  3:689--735, 1995.

\bibitem[Vil03]{Villani03}
C.~Villani.
\newblock {\em {Topics in Optimal Transportation}}.
\newblock American Mathematical Society, 2003.

\bibitem[YGVdG04]{YefimovGromaVanderGiessen04}
S~Yefimov, I~Groma, and E~Van~der Giessen.
\newblock {A comparison of a statistical-mechanics based plasticity model with
  discrete dislocation plasticity calculations}.
\newblock {\em Journal of the Mechanics and Physics of Solids}, 52(2):279--300,
  2004.

\bibitem[YVdG05a]{YefimovGiessen05}
S.~Yefimov and E.~Van~der Giessen.
\newblock Multiple slip in a strain-gradient plasticity model motivated by a
  statistical-mechanics description of dislocations.
\newblock {\em International Journal of Solids and Structures}, 42:3375--3394,
  2005.

\bibitem[YVdG05b]{YefimovVanderGiessen05a}
S.~Yefimov and E.~Van~der Giessen.
\newblock Size effects in single crystal thin films: nonlocal crystal
  plasticity simulations.
\newblock {\em European Journal of Mechanics-A/Solids}, 24(2):183--193, 2005.

\end{thebibliography}

\end{document}